\theoremstyle{plain}
\newtheorem{theorem}{Theorem}
\newtheorem{lemma}[theorem]{Lemma}
\newtheorem{proposition}[theorem]{Proposition}
\newtheorem{corollary}[theorem]{Corollary}
\theoremstyle{definition}
\newtheorem{definition}[theorem]{Definition}
\newtheorem{example}[theorem]{Example}
\newtheorem{remark}[theorem]{Remark}
\numberwithin{theorem}{section}
\numberwithin{equation}{section}
\newcommand{\B}{\mathbb{B}}
\newcommand{\C}{\mathbb{C}}
\newcommand{\N}{\mathbb{N}}
\newcommand{\R}{\mathbb{R}}
\newcommand{\G}{G}
\newcommand{\SU}{\mathrm{SU}}
\newcommand{\U}{\mathrm{U}}
\newcommand{\tr}{\mathrm{tr}}
\newcommand{\cH}{\mathcal{H}}
\newcommand{\cA}{\mathcal{A}}
\newcommand{\cO}{\mathcal{O}}
\newcommand{\cS}{\mathcal{S}}
\newcommand{\cF}{\mathcal{F}}
\newcommand{\cB}{\mathcal{B}}
\newcommand{\cK}{\mathcal{K}}
\newcommand{\cC}{\mathcal{C}}
\newcommand{\cL}{\mathcal{L}}
\newcommand{\cX}{\mathcal{X}}
\newcommand{\ip}[2]{\langle #1,#2 \rangle}
\newcommand{\vm}{\textbf{m}}
\newcommand{\Berg}{\cA^2(\B^n)} %Bergman space
\newcommand{\Bergalph}{\cA^2_\alpha(\B^n)}
\newcommand{\cbufL}{C_{b,u}^{(L)}(\B^n)} %cbu functions
\newcommand{\cbufR}{C_{b,u}^{(R)}(\B^n)} %cbu right functions
\newcommand{\bddf}{L^\infty(\B^n)} % bdd functions
\newcommand{\bddfG}{L^\infty(G)} % bdd functions on G
\newcommand{\Lone}{L^1(\B^n,d\lambda)} % L^1 ball
\newcommand{\Lp}{L^p(\B^n,d\lambda)} % L^p ball
\newcommand{\Lq}{L^{p'}(\B^n,d\lambda)} % L^{p'} ball
\newcommand{\LoneG}{L^1(\G)} % L^1 G
\newcommand{\LtwoG}{L^2(G,d\mu_0)}
\newcommand{\LpG}{L^p(G,d\mu_G)}
\newcommand{\cbuopL}{C_{b,u}^{(L)}(\cA^2)}
\newcommand{\cbuopLrad}{C_{b,u}^{(L)}(\cA^2)^{K}}
\newcommand{\bdd}{\cB(\cA^2)}
\newcommand{\bddLtwo}{\cB(\LtwoG)}
\newcommand{\bddH}{\cB(\cH)}
\newcommand{\traceop}{\cS^1(\cA^2)}
\newcommand{\schop}{\cS^p(\cA^2)}
\newcommand{\schoprad}{\cS^p(\cA^2)^{K}}
\newcommand{\schopq}[1]{\cS^{#1}(\cA^2)}
\newcommand{\toepalg}{\mathfrak{T}(L^\infty)}
\newcommand{\toepalgrad}{\mathfrak{T}(L^\infty)^{K}}
\newcommand{\rad}{\bdd^{K}}
\newcommand{\haar}[1]{\ d\mu_\G(#1)}
\newcommand{\haarK}[1]{\ d\mu_K(#1)}
\newcommand{\inv}[1]{\ d\lambda(#1)}
\newcommand{\Act}[1]{\tau_{{#1}}}
\newcommand{\act}[2]{\tau_{{#1}}({#2})}
\newcommand{\actfL}[2]{\ell_{#1}#2}
\newcommand{\actopL}[2]{(\widetilde{\pi}({#1})#2)} %<=== MGD: I rechanged this for consistency
\newcommand{\actopLnoouterparens}[2]{\widetilde{\pi}({#1})#2} %<=== MGD: I rechanged this for consistency
\newcommand{\actftL}[2]{\ell_{#1}(#2)}
\newcommand{\actoptL}[2]{\widetilde{\pi}_{#1}(#2)}
\newcommand{\actfR}[2]{r_{#1}(#2)}
\newcommand{\actftR}[2]{r_{#1}(#2)}
\newcommand{\piL}{\pi}
\newcommand{\Gast}{\ast_\pi}
\newcommand{\Rast}{\ast_r}
\newcommand{\Last}{\ast_\pi}
\newcommand{\last}{\ast_\ell}
\newcommand{\Tr}{\operatorname{Tr}}
\newcommand{\Ber}{\widetilde{B}}
\newcommand{\Rad}[1]{{#1}^{\#}}
\newcommand{\one}{\boldsymbol{1}}
\newcommand{\op}{\infty}
\newcommand{\Id}{\mathrm{Id}}
\newcommand{\Keralph}[2]{K^\alpha_{#1}(#2)}
\newcommand{\keralph}[2]{k^\alpha_{#1}(#2)}
\newcommand{\converges}[1]{\stackrel{#1\rightarrow\infty}{\longrightarrow}}
\DeclareMathOperator{\diag}{diag}
\newcommand{\tens}[2]{ #1 \otimes \overline{#2} }
\newcommand{\blue}[1]{{\color{blue} #1}} 
\newcommand{\red}[1]{{\color{red} #1}}
\newcommand{\dv}{dv}
\newcommand{\dvz}{dv (z)}
\newcommand{\actL}{\ell}
\newcommand{\actR}{r}
\newcommand{\rI}{{\mathrm I}}
\newcommand{\tpi}{\widetilde{\pi}}
\newcommand{\ds}{\displaystyle}
\newcommand{\wpi}{\widetilde{\pi}}
\newcommand{\bm}{\mathbf{m}}
\title[QHA in Bergman Spaces]{Quantum Harmonic Analysis on the Unweighted Bergman Space of the Unit Ball}
\begin{document}

\author{Matthew Dawson}
\address{CONAHCYT---CIMAT Unidad Mérida,
          Parque Científico y Tecnológico de Yucatán,
          KM 5.5 Carretera Sierra Papacal --- Chuburná Puerto,
          Sierra Papacal; Mérida, YUC 97302, México}
\email{matthew.dawson@cimat.mx}

\author{Vishwa Dewage}
\address{Department of Mathematics, Clemson University, South Carolina, SC 29635}
\email{vdewage@clemson.edu}

\author{Mishko Mitkovski}
\address{Department of Mathematics, Clemson University, South Carolina, SC 29635}
\email{mmitkov@clemson.edu}

\author{Gestur {\'O}lafsson}
\address{Department of Mathematics, Louisiana State University, Baton Rouge, LA 70803, USA.} 
\email{olafsson@math.lsu.edu}
\thanks{M.~M. was supported by NSF grant DMS-2000236.}
\thanks{G.~\'O. was supported by Simons grant 58606.}

\begin{abstract}
    We study quantum harmonic analysis (QHA) on the Bergman space $\mathcal{A}^2(\mathbb{B}^n)$ over the unit ball in $\mathbb{C}^n$. We formulate a Wiener's Tauberian theorem, and characterizations of the radial Toeplitz algebra over $\mathcal{A}^2(\mathbb{B}^n)$.
    We discuss the $\alpha$-Berezin transform and investigate the question of approximations by Toeplitz operators.
\end{abstract}
\keywords{Toeplitz operators, approximation, Bergman spaces, $C^*$-algebras, quantum harmonic analysis, bounded domains}

\subjclass{22D25, 32A36, 47B35, 47L80}

\maketitle

\setcounter{tocdepth}{1}
\tableofcontents

\section{Introduction}
With the recent interest in R. Werner's Quantum Harmonic Analysis (QHA), we now have a new set of tools that are applicable to several areas in analysis \cite{BBLS22, FH23, F19, FR23, KLSW12, LS20}. As for operator theory, QHA has not only clarified proofs of well-known theorems but also brought to light intriguing new results, offering a fresh perspective. In \cite{F19}, Fulsche uses QHA to provide several meaningful characterizations of the Toeplitz algebra over the Fock space. By now, it is evident that QHA is well suited to the study of Toeplitz operators on the Fock space over $\C^n$ \cite{DM23, DM24, FH23, F19, FR23}. Our main goal is to understand the Toeplitz operators on the Bergman space over the unit ball $\B^n$ in $\C^n$ using QHA notions. 

We formulate  QHA on the Bergman space $\Berg$ over the unit ball $\B^n$ in $\C^n$, utilizing %the \blue{Bergman space, or %square integrable}  
the scalar-type holomorphic discrete series of square-integrable representations of the noncommutative group $\SU(n,1)$ acting on the weighted Bergman spaces $\cA^2_\alpha(\B^n)$, where $\alpha>-1$.
%\nomenclature{$\Berg$}{Bergman space} 

%\red{This is too short step from one subject to another, in particular as the Tauberian Theorem shows up in the last section}
As observed in \cite{KLSW12,LS18, W84}, Wiener's Tauberian theorem plays a fundamental role in QHA, which we discuss in Theorem~\ref{theo:WienerT for GK} in the context of radial operators. Then, by exploiting the fact that unit ball is a commutative space, we note that Fulsche's characterizations of the Toeplitz algebra for the Fock space holds for the radial Toeplitz algebra over the Bergman space $\Berg$. 
More importantly, using QHA, we investigate, for each $\alpha > -1$, an $\alpha$-Berezin transform \[
    \Ber_\alpha: \cB(\Berg) \rightarrow L^\infty(\B^n).
\]
This transform shares many of the interesting properties of the $\alpha$-Berezin transform introduced for integer values of $\alpha$ by Suarez in \cite{S04} for the unit disc $\mathbb{D}$, and was generalized to the unit ball in \cite{BHV14}; for instance one can characterize the Toeplitz operators using the this $\alpha$-Berezin transform as we note in Theorem~\ref{theo:Toeplitz criterion}, as was first proven for the Bergman space over the unit disc $\cA^2(\mathbb{D})$ in \cite{S04}. However, we emphasize that using QHA one can define the $\alpha$-Berezin transform with a clear motivation and this leads to much simpler proofs. We also extend this transform for non-integer values of $\alpha$.
The main focus of this paper is to explore the question of approximating a bounded operator $S\in \cB(\Berg)$ by Toeplitz operators of the form $T_{\Ber_\alpha(S)}$ as $\alpha\rightarrow\infty$. It is known that this convergence holds for radial operators in the Toeplitz algebra \cite{BHV14b,S05} and also for any Toeplitz operator \cite{S04,S07}. We find a larger class of operators in the Toeplitz algebra $\toepalg$ for which $S=\lim_{\alpha\rightarrow \infty} T_{\Ber_\alpha(S)}$ in the operator norm. We also discuss classes of operators for which this convergence holds in the Schatten-$p$ norm as well as in the strong operator topology. 

The article is organized as follows. In Section~\ref{sec:prelim}, we collect some facts about Toeplitz operators, convolutions and group representations on Bergman spaces.  In Section~\ref{sec:QHA Bergman}, we discuss QHA for the Bergman space. Our main results are contained in Sections~\ref{sec:Wiener T} and \ref{sec:Berezin transform}. In Section~\ref{sec:Wiener T}, we prove a version of Wiener's Tauberian theorem for operators on $\Berg$. In Section \ref{sec:Berezin transform}, we discuss the $\alpha$-Berezin transform, discuss the convergence $T_{\Ber_\alpha(S)} \stackrel{\alpha\rightarrow\infty}{\longrightarrow} S$ and show that the radial Toeplitz algebra coincides with the algebra of left-uniformly-continuous radial operators. In Appendix ~\ref{sec:general QHA}, we provide an overview of QHA for locally compact groups as discussed in \cite{H23}, for the benefit of the reader unfamiliar with QHA, while also including some convergence properties that we need. In Appendix \ref{sec:table}, we include a table of notation.

\section{Preliminaries}\label{sec:prelim}
Let $n\in\N$ and let $\alpha>-1$.  The \textit{weighted Bergman space} $\Bergalph$ over the unit ball $\B^n$ in $\C^n$ is 
the reproducing-kernel Hilbert  space of all holomorphic functions on the ball that are square-integrable with respect to the measure
\[
   d\mu_\alpha(z)=C_\alpha (1-|z|^2)^{\alpha}\dvz
\]
where $C_\alpha=\frac{\Gamma (n+\alpha +1)}{n!\Gamma(\alpha+1)}=\frac{(n+\alpha)!}{n! \alpha !}$ and $\dv$ denotes the normalized Lebesgue measure of the ball. Here, as in most of this article, we follow the notation from \cite{Z05}.

The reproducing kernel $K^\alpha$ is given by
\begin{equation}
    \label{eq:reproducing_kernel_definition}
    K^\alpha(w,z)=\Keralph{z}{w}=\frac{1}{(1-\Bar{z}w)^{n+1+\alpha}},\ \ z,w\in \B^n.
\end{equation}
The Bergman projection $P:L^2(\B^n,\mu_\alpha)\to \Bergalph$ is given by
%\begin{equation}\label{eq:proj}
\[
    (Pf)(z)=\ip{f}{K^\alpha_z},\ \ z\in \B^n,\ \ f\in L^2(\B^n,\mu_\alpha).
\]
%\end{equation}

We will also use the  normalized reproducing kernel $k^\alpha_z$ defined by:
\begin{equation}\label{eq:kz}
   \keralph{z}{w}=\frac{\Keralph{z}{w}}{\|K^\alpha_z\|}=\frac{(1-|z|^2)^{(n+1+\alpha)/2}}{(1-\Bar{z}w)^{n+1+\alpha}},\ \ z,w\in \B^n.
\end{equation}
hen $\alpha=0$, we note that $\mu_0$ is the usual normalized Lebesgue measure on the unit ball, and we get the usual ``weightless'' Bergman space $\Berg$ and write $K$ and $k$ instead of $K_0$ and $k_0$, respectively. 

\begin{remark}While we need to utilize the parametrized Bergman spaces and associated notions for some calculations, our analysis will, for the sake of simplicity, primarily focus on the unweighted Bergman space $\Berg$, although we believe that every result in this article applies to $\Bergalph$ for all $\alpha > -1$.  In the forthcoming article \cite{DDO25}, we will carry out this analysis for all scalar-type discrete series representations on Bergman spaces for arbitrary unbounded symmetric domains.
\end{remark}

For a multi-index $\vm=(m_1,\ldots,m_n) \in \N_0^n$, we write $\vm ! =m_1!\cdots m_n!$ and similarly define
\begin{equation}\label{eq:onb}
  p_{\vm} (z)=z_1^{m_1}\cdots z_n^{m_n}\quad\text{and}\quad   e_{\vm}(z) =\sqrt{\frac{(n+|\vm|)!}{n!\vm!}} p_m (z),\quad z = (z_1,\cdots,z_n) \in \B^n .
  \end{equation}
Then $\{e_{\vm}\}_{\vm\in \N_0^n}$ is an orthonormal basis for $\Berg$. In particular, we have
\begin{equation}\label{eq:Konb}
K^\alpha (z,w) = \sum_{m\in\N_0^n}e_{\vm}(z)\overline{e_\vm (w)} .
\end{equation}

Denote by $\bdd$ the Banach space of all bounded operators on $\Berg$ with the usual operator norm $\|T\|=\sup_{\|v\|\le 1} \|T(v)\|$. Given a function $a:\B^n\to \C$, we define the \textit{Toeplitz operator $T_a$ with symbol $a$} by
\[
   T_af=P(af),\ \ f\in \Berg.
\]
If $a\in \bddf$ then $T_a\in \bdd$   with  $\|T_a\|\leq \|a\|_\infty$. We say that $a$ is the \textit{symbol} of $T_a$. Note that $T_a^* = T_{\overline a}$ and if $a$ is holomorphic and bounded then $T_a$ is just the
multiplication operator $f\mapsto af$. In general we do not have $T_aT_b = T_{ab}$. The \textit{Toeplitz algebra} $\toepalg$ 
is the $C^*$-algebra  generated by the Toeplitz operators $T_a$ with symbols $a\in \bddf$.

The Berezin transform of an operator $S\in \bdd$ is the function $B(S):\B^n \rightarrow \C$ given by
\begin{equation}\label{eq:BerS}
    B(S)(z)=\ip{Sk_z}{k_z},\ \ z\in \B^n.
\end{equation}
Recall that the Berezin transform of a function $f\in L^\infty(\B^n)$ is, by definition, the function $B(f):\B^n\rightarrow \C$ given by:
\begin{equation}\label{eq:BerFunction}
    B(f)(z):= B(T_f)(z) = \langle T_f k_z, k_z \rangle = \langle fk_z, k_z \rangle
\end{equation}
%=========================================================
\subsection{Discrete Series Representations of $\SU(n,1)$ on the Bergman space}
Denote by $\SU (n,1)$ the group of linear transformations of $\C^{n+1}$ that preserve the hermitian form $\beta_{n,1}(z,w) = z_1\overline{w_1} + \cdots + z_n\overline{w_n} - z_{n+1}\overline{w_{n+1}}$ and have determinant one. In the following we write $\G=\SU(n,1)$ and  $K=\U(n)\subseteq \G$, where the embedding is given by \[A\mapsto k_A=\begin{pmatrix} A & 0\\ 0 & 1/\det A\end{pmatrix} .\] 
We write elements of $G$ as block matrices of the form $g(A,u,v,d)=\begin{pmatrix} A & u \\ v^* & d \end{pmatrix}$, where $A\in M(n,\C)$, $d\in \C$, and $u,v\in \C^n$, and where $v^*$ denotes the row matrix that is the adjoint of the column vector $v$. We usually write $g,x,y$, etc for elements in $\G$ and $k,h$ for elements in $K$.  It is well known that $G$ acts on $\B^n$ by Möbius transformations, namely:
\begin{equation}\label{def:Action}
    \left(\begin{matrix} A & u \\ v^* & d \end{matrix}\right) \cdot z := \frac{Az + u}{v^* z + d} = \frac{1}{\langle z, v\rangle +d} (Az+u), \quad z\in \B^n, \,\,  \begin{pmatrix} A & u \\ v^* & d \end{pmatrix}\in G .
\end{equation}
The action is transitive and the stabilizer of $0$ is $K$, which allows one to identify  $G/K$ with $\B^n $ by the map $gK \mapsto g\cdot 0$.

For all $\alpha > -1$ one defines the \textit{scalar-type holomorphic discrete series representation} or \textit{Bergman space representation} $\pi_\alpha:\G\to \mathrm{U}(\Bergalph)$ by
\begin{equation}
   \label{eq:representation_definition}
   (\pi_\alpha(g)f)(z)=j_\alpha(g^{-1},z)f(g^{-1}\cdot z),\quad f\in\Bergalph , 
\end{equation}
where the multiplier  $j_\alpha : \G \times \B^n\to \C$ is given by:
\begin{equation}
    \label{eqn:cocycle_def}
    j_\alpha( g, z) = (v^*z + d)^{-(\alpha+n+1)} = (\langle z, v \rangle+ d)^{-(\alpha + n + 1)},\quad
    z\in \B^n,\,\, g= \begin{pmatrix} A & u \\ v^* & d \end{pmatrix}  \in \G.
\end{equation}
For the case of $\alpha=0$, we write $\pi$ in place of $\pi_0$.   

If $\alpha>-1$ is not an integer, then $G$ must be replaced by a covering group so that a holomorphic branch of the formula in (\ref{eqn:cocycle_def}) exists, or else one views $\pi_\alpha$ as a projective representation of $\G$.  If $\alpha$ is rational, then this covering group can be chosen to be a finite covering of $G$ (and thus has finite center), but if $\alpha$ is irrational, then one must choose the full universal covering group $\widetilde{G}$.  In all cases $\pi_\alpha$ belongs to the relatively discrete series of $\widetilde{G}$.

We list here properties of $K^\alpha$ and the cocycle $j_\alpha$ that we will use several times in this article. Those are well known but we include the proofs for completeness. We note that (1) is equivalent to $\pi_\alpha$ being a unitary
representation. We also note that the idea in the proof of (1) is the same idea that leads to the action \eqref{def:Action}.

\begin{lemma}
    \label{lem:cocycle_normalized_ker}    The reproducing kernel $K_\alpha$ and the cocycle $j_\alpha$ satisfy the following identities:
    \begin{enumerate}
    \item $\displaystyle 
        j_\alpha(g,w)K_\alpha(g\cdot w,g\cdot z)\overline{j_\alpha(g,z)}= K_\alpha(w,z)$.
    \item $\displaystyle |j_\alpha(g,0)|^2=(1-|g\cdot 0|^2)^{n+1+\alpha}$.
    \item $\displaystyle j_\alpha (g^{-1},gz) = j(g,z)^{-1}\quad\text{and}\quad   \overline{j_\alpha(g^{-1},0)}=j_\alpha(g,0)$.
    \item $\displaystyle \overline{j_\alpha(g^{-1},h^{-1}\cdot 0)}=\frac{j_\alpha(h,g\cdot 0)j_\alpha(g,0)}{j_\alpha(h,0)}$.
    \item $\displaystyle j_\alpha(g^{-1},w)=\overline{j_\alpha(g,0)}K_\alpha(w,g\cdot 0)$.
    \end{enumerate}
\end{lemma}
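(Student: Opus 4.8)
The plan is to reduce all five identities to elementary manipulations of the single holomorphic \emph{automorphy factor} $\phi_g(z):=\langle z,v\rangle+d$ attached to $g=\begin{pmatrix} A & u\\ v^* & d\end{pmatrix}$, so that $j_\alpha(g,z)=\phi_g(z)^{-(n+1+\alpha)}$, together with the realization of $\B^n$ in homogeneous coordinates. Writing $\widetilde z=(z,1)\in\C^{n+1}$, a direct computation gives $g\widetilde z=\phi_g(z)\,\widetilde{g\cdot z}$, i.e. applying $g$ to $\widetilde z$ produces the homogeneous vector representing $g\cdot z$, scaled by $\phi_g(z)$. Since $\beta_{n,1}(\widetilde w,\widetilde z)=\langle w,z\rangle-1=-(1-\langle w,z\rangle)$, the $\SU(n,1)$-invariance $\beta_{n,1}(g\widetilde w,g\widetilde z)=\beta_{n,1}(\widetilde w,\widetilde z)$ combined with the sesquilinearity of $\beta_{n,1}$ yields the master identity
\begin{equation*}
1-\langle g\cdot w,\,g\cdot z\rangle\;=\;\frac{1-\langle w,z\rangle}{\phi_g(w)\,\overline{\phi_g(z)}}.
\end{equation*}
Raising both sides to the power $-(n+1+\alpha)$ and recognizing $(1-\langle w,z\rangle)^{-(n+1+\alpha)}=K_\alpha(w,z)$ gives exactly (1).

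Setting $w=z=0$ in (1) and using $K_\alpha(0,0)=1$ immediately produces $|j_\alpha(g,0)|^2K_\alpha(g\cdot 0,g\cdot 0)=1$, which is (2) after writing $K_\alpha(g\cdot 0,g\cdot 0)=(1-|g\cdot 0|^2)^{-(n+1+\alpha)}$. For the cocycle properties, composing the homogeneous-coordinate formula for the product $g_1g_2$ gives $\phi_{g_1g_2}(z)=\phi_{g_1}(g_2\cdot z)\,\phi_{g_2}(z)$, equivalently the cocycle relation $j_\alpha(g_1g_2,z)=j_\alpha(g_1,g_2\cdot z)\,j_\alpha(g_2,z)$. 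Taking $g_1=g^{-1}$, $g_2=g$ and using $j_\alpha(e,\cdot)\equiv 1$ yields the first identity in (3). For the second, I would use $g^{-1}=Jg^*J$ with $J=\diag(I_n,-1)$ to compute explicitly $g^{-1}=\begin{pmatrix} A^* & -v\\ -u^* & \overline d\end{pmatrix}$; its lower-right entry being $\overline d$, we get $j_\alpha(g^{-1},0)=\overline d^{\,-(n+1+\alpha)}$ while $j_\alpha(g,0)=d^{-(n+1+\alpha)}$, hence $\overline{j_\alpha(g^{-1},0)}=j_\alpha(g,0)$.

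Identity (4) then follows formally. Applying the cocycle relation to $(hg)^{-1}=g^{-1}h^{-1}$ at the point $0$ gives $j_\alpha(g^{-1},h^{-1}\cdot 0)=j_\alpha((hg)^{-1},0)/j_\alpha(h^{-1},0)$; conjugating and applying the second part of (3) turns the right-hand side into $j_\alpha(hg,0)/j_\alpha(h,0)$, and one more application of the cocycle relation expands $j_\alpha(hg,0)=j_\alpha(h,g\cdot 0)\,j_\alpha(g,0)$, which is precisely (4). Finally, (5) is a one-line computation from the explicit inverse: the lower block-row of $g^{-1}$ is $(-u^*,\overline d)$, so $\phi_{g^{-1}}(w)=\overline d-\langle w,u\rangle$, while $g\cdot 0=u/d$ gives $K_\alpha(w,g\cdot 0)=\bigl((\overline d-\langle w,u\rangle)/\overline d\bigr)^{-(n+1+\alpha)}$; multiplying by $\overline{j_\alpha(g,0)}=\overline d^{\,-(n+1+\alpha)}$ cancels the $\overline d$ factors and reproduces $j_\alpha(g^{-1},w)=(\overline d-\langle w,u\rangle)^{-(n+1+\alpha)}$.

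The only genuine subtlety is the multivaluedness of $w\mapsto w^{-(n+1+\alpha)}$ when $\alpha$ is not an integer, so that passing from identities among the honest holomorphic factors $\phi_g$ to identities among the powers $j_\alpha$ requires a consistent choice of branch. I would handle this by first establishing every identity at the level of the factors $\phi_g$, where all manipulations are exact, and then invoking the consistent branch supplied by the definition of $j_\alpha$ on the universal cover $\widetilde G$ (for integer $\alpha$ no branch choice is needed). Modulo this bookkeeping the argument is routine, and no single step presents a serious obstacle.
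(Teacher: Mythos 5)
Your proposal is correct and follows essentially the same route as the paper: homogeneous coordinates plus invariance of $\beta_{n,1}$ for (1), specialization at $z=w=0$ for (2), the cocycle relation and $g^{-1}=\rI_{n,1}g^*\rI_{n,1}$ for (3), and formal manipulation for (4). The only (minor) divergence is (5), which you verify by explicitly computing $g^{-1}=\begin{pmatrix} A^* & -v\\ -u^* & \overline{d}\end{pmatrix}$ rather than, as the paper does, substituting $w\mapsto g^{-1}\cdot w$, $z\mapsto g^{-1}\cdot 0$ into (1) and invoking (3); both arguments are sound, and your explicit handling of the branch issue for non-integer $\alpha$ is a point the paper leaves implicit.
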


\begin{proof} (1)  Let $g = g(A,u,v,d)\in \G$ and let $z\in\C^n$. Then
    \[
        g\begin{pmatrix}z\\ 1 \end{pmatrix} = \begin{pmatrix} Az + u\\ \ip{z}{v} + d\end{pmatrix}.
    \]
    Hence, as $g$ leaves $\beta_{n,1}$ invariant, we get, with $\gamma = -n -1 -\alpha$:
    \begin{align*}
    K^\alpha (z,w)&=(1-\ip{z}{w})^\gamma =(-\beta_{n,1}((z,1)^\top,(w,1)^\top))^\gamma\\
        & =((\ip{z}{v}+d)\overline{(\ip{w}{v}+d)}-\ip{Az+u}{Aw+u})^\gamma \\
        & =(\ip{z}{v}+d)^{\gamma}\overline{(\ip{w}{v}+d)}^{\gamma }(1- (\ip{z}{v}+d)^{-1}\overline{(\ip{w}{v}+d)}^{-1}\ip{Az+u}{Aw+u})^\gamma\\
        & = j _\alpha (g,z)\overline{j_\alpha (g,w)}K^\alpha (g\cdot z,g\cdot w).
    \end{align*}
        
    \noindent
    (2)  This follows from (1)  by taking $z=w=0$ and using that $K^\alpha (0,0)=1$. 
     \medskip
        
    \noindent
    (3) We  have $1=j_\alpha (g^{-1}g,z) = j_\alpha (g, z)j_\alpha(g^{-1},g\cdot z)$.  
    For the second part we note that $g\in \SU (n,1)$ implies that $g^*\rI_{n,1}g=\rI_{n,1}$ where $\rI_{n,1}=\diag (1,\ldots , 1, -1)$. Thus $g^{-1} = \rI_{n,1}g^* \rI_{n,1}$ which implies that $(g^{-1})_{n+1,n+1} = \overline{g_{n+1,n+1}}$. Now \eqref{eqn:cocycle_def} implies the claim because $j_\alpha (g,0) = g_{n+1,n+1}^{-\alpha - n -1}$.
     \medskip
        
    \noindent
    (4)  This follows from (3) by the following argument:
    \[
        \overline{j_\alpha (g^{-1},h^{-1}\cdot 0)j_\alpha (h^{-1},0)}
        = \overline{j_\alpha ((hg)^{-1},0)}
        = j_\alpha (hg,0)= j_\alpha (h,g\cdot 0)j_\alpha (g,0).
    \]
        
    \noindent
    (5) This follows
    by replacing $g\cdot w$ by $g^{-1}\cdot w$ and $g\cdot z$ by $0=g^{-1}g\cdot 0$ in (1) %Lemma~\ref{lem:cocycle_normalized_ker}
    and then using (3). 
\end{proof}

\begin{remark}\label{rem:palpha}
    As a consequence of Lemma~\ref{lem:cocycle_normalized_ker}, part 5, we can write the representation $\pi_\alpha$ as 
    \[
        (\pi_\alpha(g)f)(w)=\overline{j_\alpha(g,0)}K^\alpha_{g\cdot 0}(w) f(g^{-1}w),\ \ w\in \B^n.
    \]
    We also note that $j_\alpha(g^{-1},w)=\overline{j_\alpha(g,0)}K_\alpha(w,g\cdot 0)\neq |j_\alpha(g,0)|K_\alpha(w,g\cdot 0)=k^\alpha_{g\cdot 0}(w)$.
\end{remark}

    In what follows, we need the value of the formal dimension of the unweighted scalar-type holomorphic discrete series representation, $\pi := \pi_0$ (see Appendix~\ref{sec:general QHA} for the definition of the formal dimension of a square-integrable representation).  This is a straightforward calculation, first performed by Harish-Chandra~(see \cite{H56}), which we include for completeness and to guarantee consistency with our notation (in the representation-theory literature one often parametrizes the holomorphic discrete series with  $\lambda := \alpha + n +1$, so that $\lambda > n$) and choices of normalization of the measures.
\begin{lemma}
    For all $\alpha > -1$, the formal dimension of the representation $\pi_\alpha$ is given by $d_{\pi_\alpha}=C_\alpha$.  In particular, for the case of $\alpha=0$, one has that $d_\pi= 1$.
\end{lemma}
\begin{proof}
    Note that by Schur orthogonality relations 
    \begin{align*}
        \ip{{\pi_\alpha}_{1,1}}{{\pi_\alpha}_{1,1}}=\frac{1}{d_{\pi_\alpha}}\ip{1}{1}_\alpha\ip{1}{1}_\alpha
        = \frac{1}{d_{\pi_\alpha}}.
    \end{align*}
    By Lemma \ref{lem:cocycle_normalized_ker}, we have
    $$\pi_\alpha(g)1=\overline{j_\alpha(g,0)}K^\alpha_{g\cdot 0}, \quad g\in G.$$
    Then,
    \begin{align*}
        \ip{{\pi_\alpha}_{1,1}}{{\pi_\alpha}_{1,1}}&=\int_G \ip{1}{{\pi_\alpha}(g)1}_\alpha\ip{{\pi_\alpha}(g)1}{1}_\alpha\haar{g}\\
        &=\int_G |j_\alpha(g,0)|^2\ip{1}{K^\alpha_{g\cdot 0}}_\alpha\ip{K^\alpha_{g\cdot 0}}{1}_\alpha\haar{g} \\
        &= \int_{\B^n} \ip{1}{K_z}_\alpha\ip{K_z}{1}_\alpha \ (1-|z|^2)^\alpha dz\\
        &=\frac{1}{C_\alpha}\int_{\B^n} 1 \ d\mu_\alpha(z) \ \ \text{(by the reproducing formula)}\\
        &=\frac{1}{C_\alpha}.
    \end{align*}
    Thus $d_\pi=C_\alpha$ as required.
\end{proof}

%==========================================================
\subsection{Translations  of functions on $\G$ and the ball}\label{ss:Translation}

There are several forms of translation of functions that will play a role in the following. We collect them here for later reference.

We identify right-invariant functions on $G$ with functions on $\B^n$ using the   $G$-equivariant surjection
\[
    p: G  \rightarrow \B^n , \quad
       g  \mapsto g\cdot 0 .
\]
The identification is then given by $F = f\circ p$. We will use this correspondence in the following without further comment. In that connection, uppercase letters and greek letters $\psi$, $\varphi$, etc, will always denote functions on $\G$ and the lower case letter the corresponding function on $\B^n$. The letter $a$ will be reserved for bounded functions on $\G$. All functions will at least be assumed to be measurable. Note that $F$ is smooth if and only if $f$ is smooth. The (usual) translation of a function $F$ on $G$ by $g\in \G$, denoted $\actL_g{F}$, is given by
\[
    (\actL_g F)(h)=(\actL (g))F (h) =F({g^{-1}}{h}), \ \ g,h \in \G.
\]
The right translation of a function $F$ on $\G$  by $g\in \G$,  denoted $\actR_gF$ or $\actR(g)F$, is given by 
\[
    (\actR_g F)(h) = (\actR (g)F)(h)= F(hg), \quad g,h \in \G.
\]
We note that left and right translations commute; that is, $ \actL_{g}\actR_hF =\actR_h \actL_g F$ for all $g,h\in G$. In particular, the left translation maps right $K$-invariant functions into right $K$-invariant functions. Both left and right translations are isometries on $L^p(\G )$, $1\le p \le \infty$, and the left action is an isometry on $L^p(\B^n)$ for all $g\in G$.

The group $G$ also acts on functions on $\B^n$ by $\actL_g f(z) = f(g^{-1}\cdot z)$. We note that $F$ is right $K$ invariant if and only if $\actL_g F$ is right invariant and with the above identification we have $(\actL_gf )\circ p = \actL_gF$.

%===========================================================
\subsection{Convolutions of functions}\label{ss:Conv}
Denote by $d\mu_G $  the Haar measure on $\G$ and by $\lambda$   the $G$-invariant measure on $\B^n$. Then, up to a constant,
\begin{equation}\label{eq:InvM}
    d\lambda(z)=\frac{1}{(1-|z|^2)^{n+1}}\dvz.
\end{equation}
With this identification we have
\[
    \int_\G F(g) \haar{g} =\int_{\B^n} f(z) d\lambda(z),\quad  f\in L^1(\B^n,d\lambda).
\]

Consider now the left and right regular representations $\ell$ and $r$ acting on the Banach spaces $L^p(G)$ for each $1\leq p\leq \infty$ that we defined in the previous section.  We can consider the corresponding integrated representations (see Appendix~\ref{sec:integrated_rep}) of $L^1(G)$ on $L^p(G)$. If $\psi\in L^1(G)$ and $F\in L^p(G)$, then one readily sees that $\ell(\psi)F$ corresponds to the usual \textit{convolution} $\psi *F \in L^p(G)$, which we will hitherto refer to as the \textit{left convolution} of $\psi$ and $F$ and denote by $\psi *_\actL F$.

In particular, given two functions $\psi\in L^1(G)$ and $F\in L^p(G)$, the left convolution $\psi*_\ell F$ is given by:
\[
    (\psi *_\actL F)(g):=\actL (\psi )F(g)=\int_{\G} \psi(h)\actL_h F(g) \haar{h} = \int_G \psi(h) F(h^{-1} g) \haar{h}
\]
for all $g\in G$ such that $h\mapsto\psi(h)F(h^{-1}g)$ is integrable (which will occur for almost all $g\in G$).

We similarly define the \textit{right convolution} of $\psi\in L^1(G)$ with $F\in L^p(G)$ as the integrated right regular representation of $\psi$ applied to $F$:
\[
    (\psi *_\actR F)(g):=\actR (\psi )F(g) = \int_{\G} \psi(h)(\actR_h F)(g) \haar{h} = \int_G \psi(h) F(gh) \haar{h}
\]
for all $g\in G$ such that $h\mapsto\psi(h)F(gh)$ is integrable.

More generally, we will use the above formulas to define the left and right convolutions of two measurable functions $\psi,F:G\rightarrow\C$ whenever the integrals converge almost everywhere.

It is well known that the left and right convolutions are connected by $\psi *_\actL F = \psi *_\actR F^\vee$ where $F^\vee (x) = F(x^{-1})$ for all $x\in G$.

Furthermore, the following holds for $x\in \G$ and $\psi$ and $F$ as above 
\begin{align*}
    \actL_x (\psi *_\actL F)
        &=(\actL_x \psi)*_\actL F \quad\text{and} \quad  \actL_x (\psi *_\actR  F)= \psi *_{\actR} (\actL_x F)\\
     \actL_x (\psi*_\actR F) 
        &= \psi *_\actR (\actL_x F )\quad\text{and}\quad  \actR_x (\psi*_\actR F) = \psi *_\actL (\actR_{x}F) .
\end{align*}
The convolution product of functions is associative. However, it is not necessarily commutative, since the group $\G$ is noncommutative.

%==========================================================
\subsection{Involutions and lifting to $G$}
\label{sec:involutions_and_lifting}
Recall that $\B^n$ is a Riemannian symmetric space, which means that there are geodesic-inverting isometric involutions around every point in $\B^n$.  For each $w\in\B^n$, let $w_0\in\B^n$ be the midpoint of the (unique) geodesic from $0$ to $w$.  We define $\tau_w : \B^n\rightarrow \B^n$ to be the isometric involution that fixes the point $w_0$ and inverts geodesics around $w_0$.  In particular, one sees that $\tau_w 0 = w$ and that $\tau_w w = 0$ (see, for instance, \cite[Section 1.2]{Z05}).

In the case of the unit disk (i.e., $n=1$), we can write:
\[
    \tau_z(w) := \frac{w-z}{\langle w, z \rangle -1}.
\]
In fact, one quickly checks that $\tau_z$ is an involution, that $\tau_z(0) = z$ and $\tau_z(z) = 0$ for all $z\in\B^n$.  One can also verify that $\tau_z$ fixes the point $w_0 = \frac{1-\sqrt{1-|w|^2}}{|w|^2}z$ and inverts geodesics around $w_0$.

Returning to the the general case, it is not difficult to see that these involutions $\tau_z$ live in the connected component of the identity of the isometry group $\mathrm{Iso}_0(\B^n)$ of $\B^n$, which can be identified with $G/Z$, where $Z=\{\omega I \mid \omega^{n+1}=1\}$ is the center of $G=\SU(n,1)$.  The map
\begin{align*}
    \tau : \B^n & \rightarrow G/Z \\
            z   & \mapsto     \tau_z
\end{align*}
can be lifted to a smooth map $\tau' : \B^n \rightarrow G$ satisfying that $\tau'_z \cdot w = \tau_z (w)$ for all $z,w\in\B^n$.  Furthermore, once we fix the value of $\tau'_0$, the lifting is unique, as usual.  If $\tau', \tau'': \B^n \rightarrow G$ are both liftings of $\tau$, then there exists an element $\omega$ of $Z$ such that $\tau''(z) = \omega \tau'(z)$ for all $z\in\B^n$.

%==========================================================
\subsection{Convolutions of radial functions}

Recall that if $K$ is a compact subgroup of a locally-compact group $G$, then we say that $G/K$ is a \textit{commutative space} if the Banach algebra $L^1(K\backslash G / K)$ of bi-$K$-invariant functions is commutative under the convolution product.  Note that for $1\le p \le \infty$ one can identify 
\[ 
    L^p(K \backslash G /K) = \{ f\in L^p(G) \mid f(k_1 g k_2) = f(g) \text{ for almost all } k_1, k_2 \in K \text{ and } g\in G \}
\]
with the space
\[ 
    L^p( G / K)^K = \{ f\in L^p(G/K) \mid f(k\cdot x) = f(x) \text{ for almost all } k\in K \text{ and } g\in G \}
\]
of \textit{radial} (left-$K$-invariant) functions on $G/K$. It is well known that the unit ball $\B^n = \SU(n,1) / \U(n)$ is a commutative space.  Recall that a function $f$ on $\B^n$ is said to be radial if and only if there exists a function $\varphi$ defined on $[0,\infty)$ such that $f (z) = \varphi (|z|)$ for all $z\in \B^n$. In particular, $f(z)$ only depends on the distance from $z$ to the origin.

We define a projection $L^p (\B^n)\to L^p(\B^n)^K, f\mapsto f^\#$ by
\[
    f^\sharp (x) =\int_K f(kx)\, d\mu_K(x),
\]
where $\mu_K$ is the normalized Haar measure on $K$ (that is, $\mu_K (K)=1$). We also define the projection $L^p(\G) \to L^p(\B^n), f \mapsto f_K$ by
\[
    f_K (x) = \int_K f(xk)d\mu_K(k)
\]
for all $x\in \B^n$.
 
We recall the following property of radial functions:
\begin{lemma}\label{lem:radialf_invar}
    Let $f$ be a radial function on $\B^n$. Then 
    \[
        F(g)=F(g^{-1}),\quad g\in \G,
    \]
    where $F = f\circ p$ as usual.  Also,
    \[
        \actfL{g}{f}=\actftL{z}{f},
    \]
    where $z=g\cdot 0$.
\end{lemma}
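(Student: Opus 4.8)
My plan is to reduce both identities to a single structural observation together with the characterization of radial functions as $K$-invariant functions. The observation is this: if $z = g\cdot 0$, then both $g$ and any lift $\tau'_z$ of the geodesic involution $\tau_z$ carry $0$ to $z$, so they differ by an element of the stabilizer of $0$. Indeed, since the action is transitive with stabilizer of $0$ equal to $K$, and since $\tau'_z\cdot 0 = \tau_z(0) = z = g\cdot 0$, the element $(\tau'_z)^{-1}g$ fixes $0$, hence $(\tau'_z)^{-1}g = k \in K$ and $g = \tau'_z k$. The second ingredient I will use is that $f$ radial is equivalent to $f$ being $K$-invariant: from the embedding $k_A\cdot z = (\det A)Az$ with $A\in\U(n)$ one reads off $|k\cdot z| = |z|$ for all $k\in K$, so $f(z)=\varphi(|z|)$ forces $f(k\cdot u) = f(u)$ for every $k\in K$ and $u\in\B^n$.

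For the first identity I would use the involution property $\tau_z^{-1}=\tau_z$, which gives $(\tau'_z)^{-1}\cdot 0 = \tau_z(0) = z$. Then from $g = \tau'_z k$,
\[
    g^{-1}\cdot 0 = k^{-1}(\tau'_z)^{-1}\cdot 0 = k^{-1}\cdot z,
\]
and $K$-invariance of $|\cdot|$ yields $|g^{-1}\cdot 0| = |k^{-1}\cdot z| = |z| = |g\cdot 0|$. Since $f$ is radial, $F(g^{-1}) = f(g^{-1}\cdot 0) = \varphi(|g^{-1}\cdot 0|) = \varphi(|g\cdot 0|) = F(g)$. (Alternatively this follows from $G$-invariance and symmetry of the Bergman metric, via $d(0,g^{-1}\cdot 0) = d(g\cdot 0,0) = d(0,g\cdot 0)$, but the argument above avoids invoking the metric.)

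For the second identity I would first fix the interpretation of $\ell_z$ for $z\in\B^n$ as translation by the symmetry, $\ell_z f := \ell_{\tau'_z} f$, so that $(\ell_z f)(w) = f((\tau'_z)^{-1}\cdot w) = f(\tau_z(w))$. Using $g = \tau'_z k$ once more,
\[
    (\ell_g f)(w) = f(g^{-1}\cdot w) = f\big(k^{-1}(\tau'_z)^{-1}\cdot w\big) = f\big(k^{-1}\cdot \tau_z(w)\big),
\]
and $K$-invariance of $f$ removes the leading factor $k^{-1}$, leaving $f(\tau_z(w)) = (\ell_z f)(w)$. As this holds for all $w$, we conclude $\ell_g f = \ell_z f$.

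The only genuinely delicate point I anticipate is bookkeeping rather than analysis: one must pin down the meaning of $\ell_z$ as translation by the involution and check that it does not depend on the chosen lift $\tau'_z$. This is where triviality of the $Z$-action on $\B^n$ enters, since two lifts differ by some $\omega\in Z$ and $\omega$ acts as the identity on $\B^n$, so $(\omega\tau'_z)^{-1}\cdot w = (\tau'_z)^{-1}\cdot w$. The remaining verification, that $K=\U(n)$ preserves $|\cdot|$, is immediate from $|\det A| = 1$ and unitarity of $A$; everything else is a direct substitution using $g = \tau'_z k$ together with radiality, with no decomposition of $G$ required.
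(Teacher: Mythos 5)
Your proof is correct and takes essentially the same route as the paper: both arguments decompose $g = \tau_z k$ with $k \in K$ (by observing that composing $g$ with $\tau_z$ fixes the origin, hence lands in the stabilizer $K$) and then use the $K$-invariance of radial functions to delete the factor $k$. The only differences are cosmetic --- you work with explicit lifts $\tau'_z$ in $\G$ and check independence of the lift via the trivial action of $Z$, and you get the first identity directly from $|g^{-1}\cdot 0| = |g\cdot 0|$, whereas the paper views everything in $\mathrm{Aut}(\B^n)$ and obtains the first identity by setting $w=0$ in the second.
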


\begin{proof}
    Let $g\in \G$ and let $z_g=g\cdot 0$. Then, since %$\Act{z}\cdot 0=\act{z_g}{g\cdot 0}=0$, 
    $\Act{z_g}(z_g) = \Act{z_g}(g\cdot 0) = 0$, we have that $\Act{z_g}g=k_g$ for some $k_g\in K$, where we view $\Act{z_g}$ and $g$ as elements of $\mathrm{Aut}(\B_n)$. It follows that $g=\Act{z_g}k_g$ and thus that
    \[
        f(g^{-1}\cdot w)=f(k_g^{-1}\act{z_g}{w})=a(\act{z_g}{w})
    \]
    for all $w\in \B^n$, since $f$ is radial. Finally, by taking $w=0$, we get that
    \[
        f(g^{-1})=f(g^{-1}\cdot 0)=f(\act{z_g}{0})=f(g\cdot 0)=F(g).\qedhere
    \]
\end{proof}

\begin{lemma}\label{lem:2.4} Let $1\le p \le \infty$ and let $p^\prime = p/(p-1)$. Let $f\in L^p (\B^n)$ and $g\in L^{p'}(\B^n)$ or
$f,g\in L^1(\B^n)$. Then the following holds:  
\begin{enumerate}
    \item If $g$ is $K$-invariant and $h\in L^p(\G)$ or in $L^1(\G )\cup L^\infty (\G)$ if $p=1$, then $h *g =h_K * g$.
    \item $f*g = f*g^\sharp $ and 
    \[ f*g(w) = \int_{\B^n} f(z) g^\sharp (\tau_z w) \inv{z}.\]
    \item If $f$ is $K$-invariant, then $f*g$ is $K$-invariant.
\end{enumerate}
\end{lemma}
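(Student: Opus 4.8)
The plan is to transport all three identities to the left convolution $\last$ on $\G$ through the standard dictionary $f\leftrightarrow F=f\circ p$ between functions on $\B^n$ and right-$K$-invariant functions on $\G$. Two observations organize the argument. First, $F$ and $G$ are automatically right-$K$-invariant because $p(gk)=p(g)$, and a function on $\B^n$ is $K$-invariant precisely when its lift is bi-$K$-invariant; so the hypotheses ``$g$ $K$-invariant'' and ``$f$ $K$-invariant'' translate into left-$K$-invariance of $G$ and of $F$, respectively. Second, under this identification each convolution in the statement is an instance of $\last$: the mixed product $h*g$ equals $h\last G$ viewed on $\B^n$ (well defined since $G$ right-$K$-invariant forces $h\last G$ to be right-$K$-invariant), and similarly $f*g=F\last G$ and $h_K*g=h_K\last G$, where $h_K(x)=\int_K h(xk)\haarK{k}$ is regarded as a right-$K$-invariant function on $\G$. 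The analytic inputs are only Fubini's theorem — justified in each regime ($f\in L^p$, $g\in L^{p'}$, or $f,g\in L^1$) by H\"older's and Young's inequalities — the inversion-invariance of $\mu_K$, and the integration formula $\int_\G \Phi(y)\haar{y}=\int_{\B^n}\int_K \Phi(\tau'_z k)\haarK{k}\inv{z}$, for which I would use the smooth section $z\mapsto\tau'_z$ of $p$ (note $\tau'_z\cdot 0=\act{z}{0}=z$).

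For (1), expanding $h_K\last G$ and applying Fubini gives $\int_K\int_\G h(yk)\,G(y^{-1}x)\haar{y}\haarK{k}$; the Haar-invariant substitution $y\mapsto yk^{-1}$ turns $G(y^{-1}x)$ into $G(ky^{-1}x)$, and left-$K$-invariance of $G$ (i.e.\ $K$-invariance of $g$) collapses the $k$-integral, leaving $h\last G$. Hence $h*g=h_K*g$.

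For (2), I would evaluate $(f*g)(w)=(F\last G)(x)$ for any $x$ with $x\cdot 0=w$ and expand the $\G$-integral by the integration formula along the section $\tau'$. Right-$K$-invariance of $F$ gives $F(\tau'_z k)=f(z)$, while the inner integral over $K$ of $G\bigl((\tau'_z k)^{-1}x\bigr)=g\bigl(k^{-1}\cdot((\tau'_z)^{-1}\cdot w)\bigr)$ produces, after the substitution $k\mapsto k^{-1}$, exactly $g^\sharp\bigl((\tau'_z)^{-1}\cdot w\bigr)$. This is the conceptual reason the radialization $g^\sharp$ appears and why $f*g=f*g^\sharp$: since the resulting formula involves only $g^\sharp$ and $(g^\sharp)^\sharp=g^\sharp$, feeding $g^\sharp$ back in changes nothing. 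The remaining point is the geometric identification $(\tau'_z)^{-1}\cdot w=\act{z}{w}$: because $\tau'_z$ projects to the involution $\tau_z$ in $\Iso_0(\B^n)=G/Z$ and $Z$ acts trivially on $\B^n$, the action of $(\tau'_z)^{-1}$ on $\B^n$ is $\tau_z^{-1}=\tau_z$. With $\inv{z}=\dvz/(1-|z|^2)^{n+1}$ this yields the displayed formula.

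For (3), when $f$ is $K$-invariant the lift $F$ is left-$K$-invariant, so substituting $y\mapsto ky$ in $(F\last G)(kx)=\int_\G F(y)G(y^{-1}kx)\haar{y}$ gives $\int_\G F(ky)G((ky)^{-1}kx)\haar{y}=\int_\G F(y)G(y^{-1}x)\haar{y}=(F\last G)(x)$, using $F(ky)=F(y)$ and $(ky)^{-1}kx=y^{-1}x$; thus $f*g$ is $K$-invariant. I expect the only genuine obstacle to be the geometric step in (2) — matching the abstract coset representative $(\tau'_z)^{-1}$ to the concrete geodesic symmetry $\tau_z$ and using triviality of the center's action — since everything else is a formal consequence of the invariance properties of the Haar measures on $\G$ and $K$.
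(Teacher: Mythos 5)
Your proof is correct. Parts (1) and (3) are, modulo the explicit lift to $\G$ and the fact that you run (1) from $h_K*g$ back to $h*g$ rather than the reverse, the same computation as the paper's: insert a $k$ using the invariance hypothesis, change variables by Haar invariance, and average over $K$. Part (2) is where you genuinely diverge. The paper proves $f*g=f*g^\sharp$ first, by the same insert-and-average device ($f(xk)=f(x)$, substitute $x\mapsto xk^{-1}$, then integrate over $K$), and the displayed integral formula is then left essentially implicit --- completing it requires the extra observation that, writing $x=\tau_z k_x$ with $z=x\cdot 0$, radiality of $g^\sharp$ gives $g^\sharp(x^{-1}\cdot w)=g^\sharp(\tau_z\cdot w)$, after which the identification $\int_\G F\,d\mu_G=\int_{\B^n}f\,d\lambda$ applies. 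You instead disintegrate the Haar measure along the section $z\mapsto\tau'_z$ (Weil's quotient integration formula), which produces the displayed formula directly, including the geometric step $(\tau'_z)^{-1}\cdot w=\tau_z(w)$ justified by triviality of the $Z$-action; the identity $f*g=f*g^\sharp$ then comes for free from $(g^\sharp)^\sharp=g^\sharp$, with no further integration. What your route buys: the displayed formula is actually derived rather than asserted, and the role of the lift $\tau'$ and of inversion-invariance of $\mu_K$ is made explicit. What the paper's route buys: it needs no section or disintegration formula (its only integration identity is the one for right-$K$-invariant functions) and is correspondingly shorter. Both arguments rest on the same analytic inputs (Fubini, unimodularity/invariance of the Haar measures), so the difference is one of organization, but a real one in part (2).
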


\begin{proof} 
    This is well known, but we include the proof for completeness. 
    
    \noindent
    (1) We have, for all $k\in K$, that
    \begin{align*}
    h*g(w) &=\int_\G h(x )g (x^{-1}w)\haar{x} = \int_\G h(x )g ( kx^{-1}w)\haar{x}\\
           &=\int_\G h(xk )g(x^{-1}w)\haar{x} .
    \end{align*}
    Now integrate over $K$.
    \medskip

    \noindent
    (2)
      Let $k\in K$ and $w\in \B^n$. Then
    \begin{align*}
        f*g (w) &= \int_{\G} f(x) g(x^{-1}w)d\mu (x) =\int_{\G} f(xk) g(x^{-1}w)d\mu (x)\\
                &=\int_{\G} f(x) g(kx^{-1}w)d\mu (x)
    \end{align*}
    Now integrate over $K$. 

    \noindent
    (3) 
    This claim follows from \eqref{eq:InvM}.
    \[
        f*g(kw) =\int_G f(x) g(x^{-1} kw)\haar{x} =\int_G f(kx) g(x^{-1} w)\haar{x} =f*g(w).\qedhere
    \]
\end{proof}

\subsection{The natural approximate identity arising from $\Bergalph$}
\label{sec:natural_approximate_identity}
For all $\alpha > -1$, let $\varphi_\alpha$ be given by 
\[%\label{eq:varphia}
    \varphi_\alpha(z)=C_\alpha(1-|z|^2)^{\alpha+n+1} 
    = \frac{C_\alpha}{K^\alpha (z,z)} 
    = C_\alpha |j_\alpha (\tau_z,0)|^2
    = C_\alpha |j_\alpha (g,0)|^2,
\]
where $z= g\cdot 0$ and where $C_\alpha=\frac{(n+\alpha)!}{n!\alpha!}$ is chosen so that $\int_{\B^n} \varphi_\alpha (z)\ \inv{z}=1$. In particular $C_0 =1$
and
\begin{equation}\label{eq:vphi0}
    \varphi_0(z) =\varphi(z)= (1-|z|^2)^{n+1}=|j(g,0)|^2 = \|K_z\|^2,
\end{equation} 
where $z=g\cdot 0$. 
\begin{lemma}\label{lem:2.5}
    We have the following identities:
    \begin{enumerate}
        \item  
        $\displaystyle \varphi_\alpha(g\cdot z)=|j_\alpha(g,z)|^2\varphi_\alpha(z)$
        \item
        $\displaystyle \varphi_\alpha(g^{-1}\cdot z)=|K_\alpha(z,g\cdot 0)j_\alpha(g,0)|^2\varphi_\alpha(z)=|\keralph{g\cdot 0}{z}|^2\varphi_\alpha(z)$.
    \end{enumerate}
\end{lemma}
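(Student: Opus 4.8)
The plan is to obtain both identities as direct consequences of Lemma~\ref{lem:cocycle_normalized_ker}, using the form $\varphi_\alpha(w) = C_\alpha / K^\alpha(w,w)$, which is the most convenient of the several expressions for $\varphi_\alpha$ recorded just above the statement.

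For part (1), I would begin with $\varphi_\alpha(g\cdot z) = C_\alpha / K^\alpha(g\cdot z, g\cdot z)$ and invoke part (1) of Lemma~\ref{lem:cocycle_normalized_ker} with both arguments set equal to $z$, which reads $j_\alpha(g,z)\,K^\alpha(g\cdot z, g\cdot z)\,\overline{j_\alpha(g,z)} = K^\alpha(z,z)$, i.e. $|j_\alpha(g,z)|^2\,K^\alpha(g\cdot z, g\cdot z) = K^\alpha(z,z)$. Solving for $K^\alpha(g\cdot z, g\cdot z)$ and substituting back gives $\varphi_\alpha(g\cdot z) = C_\alpha |j_\alpha(g,z)|^2 / K^\alpha(z,z) = |j_\alpha(g,z)|^2 \varphi_\alpha(z)$, as claimed.

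For part (2), I would first apply part (1) with $g$ replaced by $g^{-1}$ to obtain $\varphi_\alpha(g^{-1}\cdot z) = |j_\alpha(g^{-1}, z)|^2 \varphi_\alpha(z)$, and then rewrite $j_\alpha(g^{-1},z)$ via part (5) of Lemma~\ref{lem:cocycle_normalized_ker}, namely $j_\alpha(g^{-1},z) = \overline{j_\alpha(g,0)}\,K^\alpha(z, g\cdot 0)$. Taking the modulus squared immediately yields the first equality $\varphi_\alpha(g^{-1}\cdot z) = |K^\alpha(z, g\cdot 0)\,j_\alpha(g,0)|^2 \varphi_\alpha(z)$. To pass to the second equality it remains to identify $|K^\alpha(z,g\cdot 0)\,j_\alpha(g,0)|^2$ with $|\keralph{g\cdot 0}{z}|^2$; for this I would use part (2) of Lemma~\ref{lem:cocycle_normalized_ker}, together with $\|K^\alpha_{g\cdot 0}\|^2 = K^\alpha(g\cdot 0, g\cdot 0) = (1-|g\cdot 0|^2)^{-(n+1+\alpha)}$, to write $|j_\alpha(g,0)|^2 = (1-|g\cdot 0|^2)^{n+1+\alpha} = 1/\|K^\alpha_{g\cdot 0}\|^2$. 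Combining this with the definition \eqref{eq:kz} of the normalized kernel gives $|K^\alpha(z,g\cdot 0)|^2\,|j_\alpha(g,0)|^2 = |K^\alpha(z,g\cdot 0)|^2 / \|K^\alpha_{g\cdot 0}\|^2 = |\keralph{g\cdot 0}{z}|^2$, completing the chain.

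The argument is entirely mechanical once the correct form of $\varphi_\alpha$ and the relevant parts of Lemma~\ref{lem:cocycle_normalized_ker} are selected, so there is no substantive obstacle. The only point demanding genuine care is the bookkeeping around the normalization of the reproducing kernel in the last step, that is, correctly matching $\|K^\alpha_{g\cdot 0}\|^{-2}$ with $|j_\alpha(g,0)|^2$ through part (2) and squaring \eqref{eq:kz} consistently, since a stray factor or a mishandled conjugate there would break the final identity.
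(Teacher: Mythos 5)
Your proof is correct and follows exactly the route the paper intends: the paper's own proof consists solely of the remark that the lemma "follows from Lemma~\ref{lem:cocycle_normalized_ker}", and your argument supplies precisely those details, using part (1) of that lemma with $w=z$ for the first identity, and parts (1), (5), and (2) together with the normalization \eqref{eq:kz} for the second. No gaps; the bookkeeping with $|j_\alpha(g,0)|^2 = \|K^\alpha_{g\cdot 0}\|^{-2}$ is handled correctly.
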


\begin{proof} This follows from Lemma~\ref{lem:cocycle_normalized_ker}.
%    We observe that:
%    \begin{align*}
%        \varphi_\alpha(g^{-1}h\cdot 0)=C_\alpha |j_\alpha(g^{-1}h,0)|^2
 %       =|j_\alpha(g^{-1},h\cdot 0)|^2 |j_\alpha(h, 0)|^2,
%    \end{align*}
 %   which proves the result.
\end{proof}

In fact, one can show that the functions $\{\varphi_\alpha\}_{\alpha>0}$ form 
a right approximate identity for the convolution algebra $L^1(G/K) =\Lone$, in the sense that $f*\varphi_\alpha \rightarrow f$ in the $L^1$ norm as $\alpha\rightarrow \infty$ for all $f\in L^1(G/K)$.  On the other hand, the corresponding functions $\{\varphi_\alpha\}_{\alpha>0}$ do \textit{not} form an approximate identity for $L^1(G)$. In fact, Lemma~\ref{lem:2.4} implies that for $f\in L^1(G)$, then one has that $\varphi_\alpha *f  =  \varphi_\alpha * f^\sharp$, which then converges to $f^\sharp$ as $\alpha \rightarrow \infty$. However, $\{\varphi_\alpha\}_{\alpha>0}$ does form a left approximate identity in $\Lone^K$ as this algebra is commutative.

We recall the Cohen-Hewitt factorization theorem (see \cite[Thm.\ 1]{C59} and \cite[Thm.\ 2.5]{H64}):
\begin{theorem}[Cohen-Hewitt]
    Let $A$ be a Banach algebra with a left approximate identity. Let $T$ be a continuous representation of $A$ on a Banach space $X$. Then $T(A)X$ is a closed subspace of $X$. 
\end{theorem}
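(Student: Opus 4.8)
The plan is to carry out the classical Cohen factorization construction, assuming — as in the cited references \cite{C59,H64} — that the left approximate identity is \emph{bounded}, say $\|e_\lambda\|\le M$ (in the application of interest the identity $\{\varphi_\alpha\}$ satisfies $\|\varphi_\alpha\|_1=1$, so $M=1$). Writing $a\cdot x:=T(a)x$, I regard $X$ as a left Banach $A$-module and adjoin a unit, passing to $A^+=A\oplus\mathbb{C}\mathbf{1}$ with $T$ extended by $T(\mathbf{1})=\mathrm{Id}_X$. Let $Z=\overline{T(A)X}$ be the essential closed submodule. Since $T(A)X\subseteq Z$ and $Z$ is closed, it suffices to prove the reverse inclusion $Z\subseteq T(A)X$: every $z\in Z$ should factor as $z=T(a)y$ with $a\in A$ and $y\in X$. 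This yields $T(A)X=Z$, which is closed. Note $A^+\cdot Z\subseteq Z$, so all intermediate vectors constructed below remain in $Z$, where the approximate identity acts as an approximate unit, i.e. $\|w-e_\lambda\cdot w\|\to 0$ for $w\in Z$ (first for $w\in AX$, then by density and the uniform bound $M$).

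Fix $z\in Z$ and $\varepsilon>0$. I choose $t\in(0,1)$ with $r:=\tfrac{(1-t)M}{t}<1$; then for any approximate-identity element $e$ the element $u=t\mathbf{1}+(1-t)e\in A^+$ is invertible, with $u^{-1}$ given by a convergent Neumann series and $\|u^{-1}\|\le B:=\tfrac{1}{t(1-r)}$, a bound depending only on $t$ and $M$. I build inductively invertible elements $a_n=u_n u_{n-1}\cdots u_1\in A^+$ with $u_k=t\mathbf{1}+(1-t)e_k$, where $e_k$ is selected \emph{after} $a_{k-1}$ and $y_{k-1}:=a_{k-1}^{-1}\cdot z$ are fixed. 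Setting $a_0=\mathbf{1}$ and $y_n:=a_n^{-1}\cdot z$, the telescoping increments are
\[
   a_{n+1}-a_n=-(1-t)(\mathbf{1}-e_{n+1})a_n,\qquad
   y_{n+1}-y_n=(1-t)\,a_n^{-1}u_{n+1}^{-1}(\mathbf{1}-e_{n+1})\cdot z .
\]
The scalar part of $a_n$ is $t^n\to 0$, and its $A$-part $g_n$ and the vector $z$ lie in $A$ and $Z$ respectively; hence $\|(\mathbf{1}-e_{n+1})a_n\|$ (controlled by $\|(\mathbf{1}-e_{n+1})g_n\|$ together with the vanishing term $t^n\|\mathbf{1}-e_{n+1}\|$) and $\|(\mathbf{1}-e_{n+1})\cdot z\|$ can both be made arbitrarily small by taking $e_{n+1}$ far enough in the net. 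Choosing $e_{n+1}$ so that each increment is at most $\varepsilon 2^{-n}$ — the already-determined finite factor $\|a_n^{-1}\|$ being absorbed into the tolerance — makes both $(a_n)$ and $(y_n)$ Cauchy.

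Passing to the limit, $a_n\to a$ in $A^+$ with vanishing scalar part, so $a\in A$; and $y_n\to y\in Z\subseteq X$ with $\|y-z\|=\|y-y_0\|\le\varepsilon$ (since $y_0=z$). Because $T(a_n)y_n=T(a_na_n^{-1})z=z$ for every $n$ and $(a_n)$ is bounded (being Cauchy), joint continuity of the module action gives $z=\lim_n T(a_n)y_n=T(a)y$ with $a\in A$. Hence $Z\subseteq T(A)X$, and therefore $T(A)X=Z$ is closed.

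The main obstacle is the tension between the two increments: controlling $a_{n+1}-a_n$ favours placing the new factor on the left, so that the \emph{left} approximate identity can act, whereas the inverse norms $\|a_n^{-1}\|$ entering $y_{n+1}-y_n$ are not uniformly bounded and grow with $n$. The resolution — the heart of Cohen's argument — is that at stage $n+1$ the number $\|a_n^{-1}\|$ is already fixed and finite, so one may still push the $y$-increment below $\varepsilon 2^{-n}$ by choosing $e_{n+1}$ to act as a sufficiently good approximate unit simultaneously on the finitely many elements $g_n$ and $z$, while the fixed parameter $t$ keeps each single factor's inverse uniformly bounded by $B$. Verifying that one net element can meet all these finitely many smallness requirements at once is the only delicate point, and it is exactly what the bounded left approximate identity provides.
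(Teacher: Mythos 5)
The paper itself gives no proof of this theorem: it is quoted from the literature, with \cite[Thm.\ 1]{C59} and \cite[Thm.\ 2.5]{H64} standing in for the argument. What you have written is a correct reconstruction of precisely that classical Cohen factorization argument, so in substance you and the paper rely on the same proof; the difference is only that you supply it. Two details deserve comment. First, you are right to insert the hypothesis that the left approximate identity is \emph{bounded}: the paper's statement omits the word, but the classical theorem requires it, and your proof genuinely uses it twice (the Neumann-series invertibility of $u_k=t\mathbf{1}+(1-t)e_k$ and the admissible choice of $t$ with $(1-t)M/t<1$ both depend on the uniform bound $M$, as does the uniform bound $\|T(e_\lambda)\|\le\|T\|M$ in the density argument on $Z$); since $\|\varphi_\alpha\|_1=1$ in the paper's application, nothing is lost. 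Second, one sentence is overstated: the increments $a_{n+1}-a_n=-(1-t)(\mathbf{1}-e_{n+1})a_n$ can \emph{never} be made smaller than $(1-t)t^n$ in norm, because the scalar component of $a_{n+1}-a_n$ is exactly $t^{n+1}-t^n=-(1-t)t^n$ and the canonical character $A^+\to\C$ is contractive; so ``each increment at most $\varepsilon 2^{-n}$'' is impossible at early stages when $\varepsilon<1-t$. This does not damage the proof: your own parenthetical decomposition gives $\|a_{n+1}-a_n\|\le(1-t)\left(t^n(1+M)+\|(\mathbf{1}-e_{n+1})g_n\|\right)$, and choosing $e_{n+1}$ to make only the second term at most $\varepsilon 2^{-n}$ produces a summable sequence of increments, which is all that Cauchyness requires. (The $y$-increments, by contrast, really can be forced below $\varepsilon 2^{-n}$, since the already-fixed factor $\|a_n^{-1}\|$ is finite; your handling of that point, and of choosing a single $e_{n+1}$ meeting the finitely many requirements at once, is exactly Cohen's resolution.) Finally, to obtain the conclusion that $T(A)X$ is a closed \emph{subspace} rather than merely a closed set, read $Z$ as the closed linear span of $T(A)X$, as your word ``submodule'' already suggests; the approximate-unit property and the module invariance $A^+\cdot Z\subseteq Z$ hold verbatim for the span, so the factorization argument applies without change and yields $Z\subseteq T(A)X\subseteq Z$.
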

This, together with Lemma~\ref{lem:2.4}, now implies that
\begin{lemma}\label{lem:2.9} 
    Let $1\leq p<\infty$. Then $L^1(\B^n)*L^p(\B^n)^K= L^p(\B^n)^K$. 
\end{lemma}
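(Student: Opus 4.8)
The plan is to realize $L^p(\B^n)^K$ as a module over the commutative convolution algebra of radial $L^1$-functions and then to invoke the Cohen--Hewitt factorization theorem together with the approximate identity $\{\varphi_\alpha\}$. Set $A = L^1(\B^n)^K$ and $X = L^p(\B^n)^K$. First I would check that left convolution makes $X$ into a Banach $A$-module: for $f\in A$ and $g\in X$ the product $f*g$ again lies in $X$ by Lemma~\ref{lem:2.4}(3), since the left factor $f$ is radial; the map $(f,g)\mapsto f*g$ is bilinear; associativity of convolution gives $(f_1*f_2)*g = f_1*(f_2*g)$; and the bound $\|f*g\|_p \le \|f\|_1\,\|g\|_p$ follows from \eqref{eq:ConNorm} (equivalently, Young's inequality). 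Thus $f\mapsto (g\mapsto f*g)$ is a continuous representation of the commutative Banach algebra $A$ on $X$, and $A$ carries the bounded left approximate identity $\{\varphi_\alpha\}_{\alpha>0}$: each $\varphi_\alpha$ is radial, so $\varphi_\alpha*g \to g^\sharp = g$ for $g\in A$.

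Next I would feed this representation into the Cohen--Hewitt factorization theorem, which yields that $A*X$ is a \emph{closed} subspace of $X$. To identify it with all of $X$, I would show $A*X$ is dense: for any $g\in X$ one has $\varphi_\alpha*g \in A*X$ while $\varphi_\alpha*g \to g$, so $g\in\overline{A*X}$. A subspace that is both closed and dense is the whole space, hence $A*X = X$; that is,
\[
    L^1(\B^n)^K * L^p(\B^n)^K = L^p(\B^n)^K .
\]
Finally I would pass to the stated form. Since $A = L^1(\B^n)^K \subseteq L^1(\B^n)$, the equality just proved gives $L^p(\B^n)^K = A*X \subseteq L^1(\B^n)*L^p(\B^n)^K$, and the reverse containment is the routine direction, handled by Lemma~\ref{lem:2.4}, which governs convolution against a radial function.

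The step I expect to demand the most care is the density input. The discussion preceding the statement records $\varphi_\alpha*g \to g$ only in the $L^1$-norm, whereas the argument needs convergence in the $L^p$-norm for $g\in L^p(\B^n)^K$. I would supply this from the standard approximate-identity estimate, using that the $\varphi_\alpha$ are nonnegative with unit mass and concentrate at the origin, together with the continuity of translation on $L^p(\B^n)$; this is precisely where the hypothesis $1\le p<\infty$ enters, as the corresponding statement fails for $p=\infty$. Everything else is bookkeeping: verifying that the convolution genuinely lands in $X$ (so that Cohen--Hewitt applies to an honest representation) and that $\{\varphi_\alpha\}$ is a one-sided approximate identity, which is exactly the form the theorem requires.
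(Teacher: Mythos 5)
Your central construction is the same as the paper's: the paper proves this lemma simply by citing the Cohen--Hewitt theorem together with Lemma~\ref{lem:2.4} and the approximate identity $\{\varphi_\alpha\}$, which is exactly the module-plus-factorization argument you set up. You are in fact more careful than the paper in noting that the density input must be $L^p$-convergence of $\varphi_\alpha * g$ for $g\in L^p(\B^n)^K$, rather than the $L^1$-convergence recorded in the text, and your sketch of that estimate (unit mass, concentration near the origin, continuity of translation on $L^p$ for $p<\infty$) is fine. What you actually establish, namely $L^1(\B^n)^K * L^p(\B^n)^K = L^p(\B^n)^K$, is correct.

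The genuine gap is the step you dismiss as routine: the containment $L^1(\B^n) * L^p(\B^n)^K \subseteq L^p(\B^n)^K$ is not given by Lemma~\ref{lem:2.4}, and with the paper's convention $(f*g)(w)=\int_G f(y)\,g(y^{-1}\cdot w)\,d\mu_G(y)$ it is false. Lemma~\ref{lem:2.4}(3) yields radiality of $f*g$ only when the \emph{left} factor is radial; part (2) radializes the \emph{right} factor, which is vacuous here because $g$ is already radial; part (1) right-averages the left factor, vacuous because $f\in L^1(\B^n)$ is already right-$K$-invariant. For a concrete counterexample on the disc, let $g$ be the indicator of the invariant ball $B(0,\epsilon)$ (radial) and $f$ the indicator of $B(z_0,\epsilon)$ with $z_0=\tfrac12$; then $(f*g)(w)=\lambda\bigl(B(z_0,\epsilon)\cap B(w,\epsilon)\bigr)$, which is supported near $z_0$ and vanishes at $-z_0$, hence is not radial, so $f*g\in L^p(\B^n)\setminus L^p(\B^n)^K$. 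This defect is inherited from the statement of Lemma~\ref{lem:2.9} itself: the set equality only holds with the radial factor placed so that Lemma~\ref{lem:2.4} applies, i.e. as the identity you actually proved, or as $L^p(\B^n)^K * L^1(\B^n) = L^p(\B^n)^K$, where Lemma~\ref{lem:2.4}(2) first replaces the $L^1$ factor by its radialization $f^\sharp$ and then part (3) gives radiality --- and this is how the paper uses such factorizations later (always with radial factors, as in $\Lone^K\Last\cbuopL^{K}$). In short: your Cohen--Hewitt argument is sound and is the paper's argument, but the ``routine direction'' you waved off is exactly where the literal statement breaks, and no part of Lemma~\ref{lem:2.4} repairs it.
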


The $\alpha$-Berezin transform of a function $a :\B^n\to \C$ such that $a|k^\alpha_z|^2$ is integrable for all $z\in \B^n$ is just the Berezin tranform of $T_a$ in $\cA^2_\alpha(\B^n)$; that is, it is given by
\begin{equation}
    \label{eq:BerAlphaDefinition}
    B_\alpha(a) (z)=\ip{a k^\alpha_z}{k^\alpha_z}_\alpha 
            =C_\alpha (1-|z|^2)^{n+1+\alpha}\int_{\B^n}a(z)\frac{(1-|w|^2)^\alpha}{|1-z\overline w|^{2(n+1+\alpha)}} \dv(w)
\end{equation}
for all $z\in \B^n$, where $\ip{\cdot}{\cdot}_\alpha$ is the inner-product in $\Bergalph$. 

\begin{lemma}\label{lem:Berezin of a function}
    Let $\alpha>0$ and let $a :\B^n\to \C$ be a function such that $B_\alpha(a)$ is defined. Then
 \[B_\alpha(a)=a\last \varphi_\alpha.\]  
\end{lemma}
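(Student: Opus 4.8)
The plan is to compute the left convolution $a\last\varphi_\alpha$ directly from the integral formula in Lemma~\ref{lem:2.4}(2) and then recognize the result as the defining integral for $B_\alpha(a)$. Since $\varphi_\alpha(z)=C_\alpha(1-|z|^2)^{n+1+\alpha}$ is radial, we have $\varphi_\alpha^\sharp=\varphi_\alpha$, so Lemma~\ref{lem:2.4}(2) gives
\[
    (a\last\varphi_\alpha)(w)=\int_{\B^n} a(z)\,\varphi_\alpha(\tau_z w)\,\inv{z}.
\]
The $L^p$ hypotheses of Lemma~\ref{lem:2.4} are not literally available here, but the standing assumption that $B_\alpha(a)$ is defined will guarantee that this integral converges absolutely, which is all that is needed for the formula to apply.

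Next I would evaluate $\varphi_\alpha(\tau_z w)$ via Lemma~\ref{lem:2.5}(2). Lifting the involution $\tau_z$ to an element $g=\tau'_z\in\G$ with $g\cdot 0=\tau_z(0)=z$, and noting that, as automorphisms of $\B^n$, $(\tau'_z)^{-1}$ and $\tau'_z$ act identically (because $\tau_z$ is an involution), Lemma~\ref{lem:2.5}(2) yields
\[
    \varphi_\alpha(\tau_z w)=\varphi_\alpha\big((\tau'_z)^{-1}\cdot w\big)=|\keralph{z}{w}|^2\,\varphi_\alpha(w).
\]
It is important that only the modulus $|\keralph{z}{w}|^2$ enters, so the branch ambiguity of $j_\alpha$ for non-integer $\alpha$ (equivalently, the passage to a covering group) is irrelevant, since the modulus is single-valued.

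The crucial step is then the symmetry identity
\[
    |\keralph{z}{w}|^2\,\varphi_\alpha(w)
    =C_\alpha\frac{(1-|z|^2)^{n+1+\alpha}(1-|w|^2)^{n+1+\alpha}}{|1-\overline{z}w|^{2(n+1+\alpha)}}
    =|\keralph{w}{z}|^2\,\varphi_\alpha(z),
\]
which is immediate from \eqref{eq:kz} once one observes $|1-\overline{z}w|=|1-\overline{w}z|$. Substituting this into the integral and using $\varphi_\alpha(z)\,\inv{z}=C_\alpha(1-|z|^2)^\alpha\,\dv(z)=d\mu_\alpha(z)$, the convolution becomes
\[
    (a\last\varphi_\alpha)(w)=\int_{\B^n} a(z)\,|\keralph{w}{z}|^2\,d\mu_\alpha(z)=\ip{a\,k^\alpha_w}{k^\alpha_w}_\alpha=B_\alpha(a)(w),
\]
which is exactly the claim.

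The main difficulty is bookkeeping rather than conceptual: one must track the several change-of-measure factors correctly and, in particular, verify the symmetrization that lets $\varphi_\alpha(w)$ be reabsorbed as the weight $d\mu_\alpha(z)$ against which the Berezin transform is defined. A secondary point requiring care is justifying that the integral formula of Lemma~\ref{lem:2.4}(2) holds under the sole hypothesis that $B_\alpha(a)$ exists; this follows because the absolute integrability of $a(z)|\keralph{w}{z}|^2$ against $d\mu_\alpha$ is precisely what makes $B_\alpha(a)(w)$ well-defined.
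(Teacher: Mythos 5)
Your proof is correct and follows essentially the same route as the paper, whose entire proof is the one-line citation ``This follows from (1) in Lemma~\ref{lem:2.5}'': your computation---rewriting $\varphi_\alpha(\tau_z w)$ via Lemma~\ref{lem:2.5}(2) (itself derived from part (1)), using the symmetry $|\keralph{z}{w}|^2\varphi_\alpha(w)=|\keralph{w}{z}|^2\varphi_\alpha(z)$, and absorbing $\varphi_\alpha(z)\,d\lambda(z)$ as $d\mu_\alpha(z)$---is exactly the computation that citation alludes to. The additional care you take with integrability (justifying Lemma~\ref{lem:2.4}(2) under the sole hypothesis that $B_\alpha(a)$ exists) and with the branch ambiguity of $j_\alpha$ for non-integer $\alpha$ is sound, and only fleshes out the same argument.
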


\begin{proof}
This follows from (1) in Lemma~\ref{lem:2.5}.
%    The proof is a direct consequence of \ref{eq:translation of varphi}.
\end{proof}

%============================================================
 %                    QHA on the Bergman space:Left
%============================================================

\section{QHA on the Bergman space}\label{sec:QHA Bergman}

In this section, we explore QHA on Bergman spaces and
specialize to the case $\cH = \cA^2(\B^n)$.
A comprehensive discussion of QHA on locally compact abelian groups is provided in Appendix \ref{sec:general QHA}.
 We write Toeplitz operators and the Berezin transform as convolutions with a finite rank operator $\Phi$.

 \subsection{Left translations of operators}
 \label{sec:left_translations_of_operators}
The irreducible square-integrable unitary representation $\piL$ of $\G$ acting on $\Berg$ facilitates QHA on 
the Bergman space and inherits the theory discussed in Appendix \ref{sec:general QHA}. 

To begin, we define the map $\pi:\B^n \to \bdd$ by 
\begin{align}\label{def:piz}
    \pi(z)f(w)  & = |j(\tau_z^{-1}, w)|f(\tau_z^{-1}\cdot w) = |j(\tau_z,w)|f(\tau_z\cdot w) \nonumber \\
                & = |j(\tau_z,0)|K_z(w)f(\tau_z\cdot w)
\end{align}
for all $f\in\Berg$ and $w\in\B^n$ (recall that $\tau_z$ is an involution). Here we have used Lemma~\ref{lem:cocycle_normalized_ker} in the last equality. Compare this with the operator $\pi(\tau_z)$, which is defined by:
\begin{align}
    \pi(\tau_z)f(w) & = j(\tau_z^{-1}, w) f(\tau_z^{-1} \cdot w) = j(\tau_z, w) f(\tau_z \cdot w) \nonumber \\
                    & = \overline{j(\tau_z,0)}K_z(w)f(\tau_z\cdot w)
\end{align}
for all $f\in \Berg$ and $w\in \B^n$,  We note that 
\[
    \pi(z) = \frac{j(\tau_z,0)}{|j(\tau_z,0)|} \pi(\tau_z)
\]
for all $z\in\B^n$.  It follows immediately that $\pi(z)$ is a unitary operator.  

Furthermore, we note that $\pi(z)$ is an involution.  In fact, $|j|:G\times \B^n \rightarrow \R_{>0}$ is a cocycle (as follows immediately from the fact that $j$ is a cocycle), and thus 
\[
    \pi(z)\pi(z)f(w) = |j(\tau_z,w) j(\tau_z,\tau_z \cdot w)| f(\tau_z\tau_z\cdot w) = f(w) 
\]
for all $f\in\Berg$ and $z,w\in\B^n$.

We also note that if $z = g\cdot 0$ with $g\in G$, then $\pi (g)$ and $\pi (z)$ differ by a phase factor because of the absolute value in \eqref{def:piz}. Finally, the absolute value in \eqref{def:piz} shows that $\pi (z)$ does not depend on the lifting of $\tau_z$ from $\mathrm{Aut} (\B^n)$ to $\G$ that we chose.

This allows us to define the translation of an operator $S$ by $z\in \B^n$ to be the operator 
\[
    \tpi(z)S:= \pi(z)S\pi(z)^{-1} = \pi(z)S\pi(z). 
\]
Despite the fact that $\pi(z)$ and $\pi(\tau_z)$ are not equal in general, it \textit{is} true that $\tpi (z) = \tpi (\tau_z)$ for all $z\in \B^n$ because the phase factors cancel out.

\subsection{Convolutions of operators.}
\label{sec:convolutions_of_operators}
For a function $\psi$ on $\G$ and $S\in \bdd$, the convolution $\psi\Gast S$ is defined formally by the weak-integral:  
\[
   \psi\Gast S := \tpi (\psi) S =\int_{\G} \psi(g) \tpi (g)S \haar{g}.
\]
For $1\leq p<\infty$, the space of all Schatten class operators on $\Berg$, is denoted by $\schop$. For the case $p=\infty$, we make the identification $\cS^\infty(\cA^2)=\bdd$.
Then for $1\leq p\leq \infty$, the following holds:
\begin{enumerate}
    \item If $\psi\in \LoneG$ and $S\in \schopq{p'}$the convolution $\psi\Gast S$ is well-defined and satisfies the following QHA Young's inequality:
    \[
        \|\psi\Gast S\|_p\leq \|\psi\|_1\|S\|_p.
    \]
    \item If $\psi\in \Lp$ and if $S\in \traceop$, the convolution $\psi\Gast S$ is well-defined and satisfies the following QHA Young's inequality:
    \[
        \|\psi\Gast S\|_p\leq \|\psi\|_p\|S\|_1.
    \] 
    \item Let $\psi_1,\psi_2$ are functions on $\G$ and $S\in \bdd$ s.t. the convolutions below are well defined by the QHA Young's inequality. Then
    \[
        (\psi_1\Gast\psi_2)\Gast S=\psi_1\Gast(\psi_2\Gast S).
    \]
\end{enumerate}
\begin{remark}
    In the above inequalities, we have used the fact that the formal dimension of $\pi$ is given by $d_\pi=1$. 
\end{remark}
For $S\in \cS^p$ and $T\in \cS^{p'}$, we define the \textit{(twisted) convolution} $S\ast A$ to be the function on $\G$ given by
\[%\label{def:ConvOp}
    (S\Gast A)(x)=\Tr(S\tpi (x){A}),\quad x\in \G,
\]
Let $1\leq p\leq \infty$. If $S\in\schop$ and $A\in \traceop$, $S\ast A\in \schop$ and we have
\[
    \|S\ast A\|_p\leq \|S\|_p\|A\|_1.
\]
For more details and proofs of these relations, see Appendix~\ref{sec:general QHA}.

\subsection{Radial operators}
Recall that an operator $S\in \bdd$ is \textit{radial} or $K$-invariant if 
\[
    \actopLnoouterparens{k}{S}=\pi (k)S\pi (k)^{-1}=S, \quad \text{ for all } k\in K.
\]
We denote the set of all radial operators, which forms a closed subalgebra of $\bdd$, by $\rad$, and we denote the closed subspace of all radial Schatten-class operators by $\schoprad$. It is known that an operator $S\in \bdd$ is radial if and only if its Berezin transform $B(S)$ is radial.   A Toeplitz operator $T_a$ is radial if and only if the symbol $a$ is a radial function. The \textit{radial Toeplitz algebra} $\toepalgrad$ is the closed subalgebra of $\toepalg$ generated by radial Toeplitz operators. It is also known that $\toepalgrad=\toepalg\cap \rad$. 

Recall that the radialization of an operator $S\in \bdd$ is given by
\[
    \Rad{S}=\int_{K} \piL(k)S\piL(k)^{-1} \haarK{k} 
\]
where $d\mu_K$ denotes the normalized Haar measure on $K$. Then $\|\Rad{S}\|\leq \|S\|$.

\begin{lemma}\label{lem:convolution_rad}
    Let $\psi$ be a function on $\B^n$ and let $S\in \bdd$ such that the convolution $\psi\Last S$ is defined. Then $\psi \Last \Rad{S}$ is defined and $$\psi \Last \Rad{S}=\psi \Last S.$$
\end{lemma}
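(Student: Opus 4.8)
The plan is to pass to the group $\G$ and exploit the right-$K$-invariance of the lift of $\psi$. Writing $\Psi = \psi\circ p$ for the right-$K$-invariant function on $\G$ corresponding to $\psi$, the convolution is the integrated action $\psi\Last S = \int_\G \Psi(g)\tpi(g)S\haar{g}$, and likewise $\Rad{S}=\int_K \tpi(k)S\haarK{k}$ is the average of $S$ over the compact group $K$. Thus the whole statement reduces to checking that the operator-valued integral defining $\psi\Last S$ is insensitive to replacing $S$ by $\tpi(k)S$ for $k\in K$, and then averaging this insensitivity over $K$.

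First I would establish the pointwise identity $\psi\Last(\tpi(k)S)=\psi\Last S$ for each fixed $k\in K$. Since the adjoint action is a genuine action (the phase in $\pi(gk)$ cancels), we have $\tpi(g)\tpi(k)=\tpi(gk)$, so
\[
    \psi\Last(\tpi(k)S)=\int_\G \Psi(g)\tpi(gk)S\haar{g}.
\]
Substituting $g\mapsto gk^{-1}$ and using that $\mu_\G$ is right-invariant (here $\G$ is unimodular) turns this into $\int_\G \Psi(gk^{-1})\tpi(g)S\haar{g} = (\actfR{k^{-1}}{\Psi})\Last S$. Because $\Psi$ is right-$K$-invariant we have $\actfR{k^{-1}}{\Psi}=\Psi$, and hence the right-hand side is exactly $\psi\Last S$.

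Finally I would interchange the average over $K$ with the integral defining the convolution. Since $S\mapsto \psi\Last S$ is linear and bounded (by the norm estimates in Lemma~\ref{lem:assoc_Lone}) and $\Rad{S}$ is a weak integral over the compact group $K$ of operators all of norm $\|S\|$, Fubini for these weak integrals gives
\[
    \psi\Last \Rad{S}=\int_K \psi\Last(\tpi(k)S)\haarK{k}=\int_K (\psi\Last S)\haarK{k}=\psi\Last S,
\]
using $\mu_K(K)=1$; in particular $\psi\Last\Rad{S}$ is defined because each inner convolution coincides with the convolution $\psi\Last S$ assumed to exist. The only genuine technical point is justifying this interchange of the $K$-average with the weak operator integral, but this is routine given the compactness of $K$ and the uniform bound $\|\tpi(k)S\|=\|S\|$, which furnishes the integrable dominating function needed to apply Fubini.
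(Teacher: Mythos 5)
Your proof is correct and follows essentially the same route as the paper's: both lift $\psi$ to a right-$K$-invariant function on $\G$, justify Fubini via the absolute integrability bound $\|\psi\|_1\|S\|\|f_1\|\|f_2\|$, and conclude using the change of variables $g\mapsto gk^{-1}$ (unimodularity) together with right-$K$-invariance of the lift. The only difference is organizational: you first prove $\psi\Last(\tpi(k)S)=\psi\Last S$ for each fixed $k\in K$ and then average over $K$, whereas the paper performs the interchange and substitution inside a single double-integral computation.
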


\begin{proof}
    We identify functions in $\Lone$ with the corresponding right-$K$-invariant functions on $G$. Suppose that $\psi\in\Lone$ and $S\in \bdd$.
    One easily checks that
    \begin{align*}
        %\int_{\G} |\psi(g)\ip{\actopL{g}{\Rad{S}}f_1}{f_2}| \haar{g}
        \int_{\G}  \int_{K} |\psi(g)| |\ip{\piL(g)\piL(k)S\piL(k)^{-1}\piL(g)^{-1}f_1}{f_2}| \ \haarK{k} \haar{g} 
                                    % &= \int_{K} \int_{\G} |\psi(g)|  |\ip{\piL(gk)S\piL(gk)^{-1}f_1}{f_2}|  \haar{g} \ \haarK{k}\\
        \leq \|\psi\|_1\|S\|\|f_1\|\|f_2\| < \infty.
    \end{align*}
    We can thus apply Fubini's theorem to the above double integral without absolute values, and it follows that     
    \begin{align*}
        \psi \Last \Rad{S} &= \int_{\G} \psi(g) \actopL{g}{\Rad{S}} \haar{g}\\
        &= \int_{\G} \psi(g) \int_{K} \piL(g)\piL(k)S\piL(k)^{-1}\piL(g)^{-1} \haarK{k} \haar{g}\\
        &= \int_{K} \int_{\G} \psi(g)  \piL(gk)S\piL(gk)^{-1}  \haar{g} \haarK{k}\ \ \ \text{(by Fubini)}\\
        &= \int_{\G}  \int_{K} \psi(gk^{-1}) \haarK{k} \actopL{g}S \haar{g}\\
        &= \int_{\G} \psi(g) \actopL{g}S \haar{g}\\
        &= \psi \Last S. \qedhere
    \end{align*}
 \end{proof}

As a consequence of the above lemma, we have the following corollary.
\begin{corollary}
    If $\psi\in \Lone$ is radial, then $\psi\Last S$ is radial.
\end{corollary}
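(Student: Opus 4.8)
The plan is to verify directly that $\psi\Last S$ is fixed by the action $\tpi(k)$ for every $k\in K$, which is exactly the definition of being radial. First I would invoke the covariance relation from Lemma~\ref{lem:assoc_Lone}(1), which (restricting the $G$-action to $K$) gives
\[
    \tpi(k)(\psi\Last S) = (\actfL{k}{\psi})\Last S
\]
for all $k\in K$, where $\tpi(k)S=\piL(k)S\piL(k)^{-1}$.

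The second step is to translate the hypothesis. Saying that $\psi$ is radial means precisely that $\psi$ is left-$K$-invariant, i.e. $\psi(k^{-1}\cdot z)=\psi(z)$ for all $k\in K$ and $z\in\B^n$. Since the left translation is $\actfL{k}{\psi}(z)=\psi(k^{-1}\cdot z)$, this is exactly the statement that $\actfL{k}{\psi}=\psi$ for every $k\in K$. Substituting into the display above yields $\tpi(k)(\psi\Last S)=\psi\Last S$ for all $k\in K$, so $\psi\Last S$ is radial.

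Alternatively, to make the dependence on the preceding material explicit, one may phrase the same computation through the radialization: integrating the covariance identity over $K$ gives
\[
    \Rad{(\psi\Last S)} = \int_K (\actfL{k}{\psi})\Last S\,\haarK{k} = \psi\Last S,
\]
where the last equality again uses $\actfL{k}{\psi}=\psi$; since an operator is radial if and only if it equals its own radialization, the conclusion follows. (By Lemma~\ref{lem:convolution_rad} one could also replace $S$ by $\Rad{S}$ at the outset, though this is not needed for the argument.)

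I do not anticipate any serious obstacle, as this is a short corollary of the covariance relation already established in Lemma~\ref{lem:assoc_Lone}(1). The only point requiring care is the bookkeeping that matches the sign convention in the definition of the left translation $\actL$ with the notion ``radial $=$ left-$K$-invariant,'' so that $\actfL{k}{\psi}=\psi$ genuinely holds for $k\in K$; once that is settled the result is immediate.
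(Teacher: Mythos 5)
Your proof is correct and is essentially the argument the paper has in mind: the paper states this corollary without proof as an immediate consequence of the preceding lemma, and your verification via the covariance relation $\tpi(k)(\psi\Last S)=(\actfL{k}{\psi})\Last S$ from Lemma~\ref{lem:assoc_Lone}(1), combined with $\actfL{k}{\psi}=\psi$ for radial $\psi$, is exactly the standard one-line derivation (your radialization variant is the same computation averaged over $K$). No gaps; the bookkeeping on ``radial $=$ left-$K$-invariant'' is handled correctly.
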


\begin{lemma}\label{lem:ConAst} 
    Suppose that $S$ is radial and that $\psi \in L^p(\G)$ for some $1\le p \le \infty$ such that $\psi \ast_\pi S$ exists. Then $\psi \ast_\pi S = \psi^\# \ast_\pi S$.
\end{lemma}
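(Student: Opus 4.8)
The plan is to imitate the proof of Lemma~\ref{lem:convolution_rad}, interchanging the roles of the operator and the function: there one radializes $S$, whereas here one uses that $S$ is \emph{already} radial in order to radialize $\psi$. The content of radiality is that $\tpi(k)S=\piL(k)S\piL(k)^{-1}=S$ for every $k\in K$; equivalently $S$ commutes with every $\piL(k)$. The first step is to feed this identity into the defining integral. For a fixed $k\in K$ the homomorphism property of $\tpi$ gives
\[
    \tpi(g)S=\piL(g)\,\piL(k)S\piL(k)^{-1}\,\piL(g)^{-1}=\tpi(gk)S,
\]
so that, after the substitution $g\mapsto gk^{-1}$ (which preserves $\mu_\G$ because $\G$ is unimodular),
\[
    \psi\ast_\pi S=\int_\G \psi(g)\,\tpi(gk)S\haar{g}=\int_\G \psi(gk^{-1})\,\tpi(g)S\haar{g}.
\]
As $k$ ranges over $K$, so does $k^{-1}$, and hence $\psi\ast_\pi S=(\actR_k\psi)\ast_\pi S$ for every $k\in K$; that is, the convolution with a radial operator is unchanged under right $K$-translation of $\psi$.

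Next I would average this family of identities over $k\in K$ against the normalized Haar measure $\mu_K$ and interchange the two integrations to obtain
\[
    \psi\ast_\pi S=\int_\G\Big(\int_K \psi(gk)\haarK{k}\Big)\tpi(g)S\haar{g}=\psi^\#\ast_\pi S,
\]
where $\psi^\#(g)=\int_K\psi(gk)\,d\mu_K(k)$. This $\psi^\#$ is right-$K$-invariant, so it descends to a function on $\B^n=\G/K$, and it satisfies $\|\psi^\#\|_p\le\|\psi\|_p$, so that $\psi^\#\ast_\pi S$ is again defined under the same hypotheses. I would emphasize that it is the \emph{right} average that appears, not the left one: by Lemma~\ref{lem:assoc_Lone}(1) a left translation $\ell_g\psi$ corresponds to applying $\tpi(g)$ to $\psi\ast_\pi S$, so the convolution genuinely depends on left translates and cannot be replaced by its left $K$-average.

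The only step needing justification is the interchange of the $K$- and $\G$-integrations. Pairing against $f_1,f_2\in\cH$, this reduces to the scalar Fubini identity for $(k,g)\mapsto\psi(gk)\,\ip{\tpi(g)Sf_1}{f_2}$ on $K\times\G$. The very hypothesis that $\psi\ast_\pi S$ exists as a (weak) integral guarantees that $g\mapsto\psi(g)\ip{\tpi(g)Sf_1}{f_2}$ is integrable for each $f_1,f_2$, and since $\mu_K$ is a finite measure and right translation is an isometry, the product function is integrable on $K\times\G$, so Fubini's theorem applies. In the model case $\psi\in L^1(\G)$ one has the explicit majorant $\|S\|\,\|f_1\|\,\|f_2\|\,|\psi(gk)|$ (using that $\|\tpi(g)S\|=\|S\|$), whose integral over $K\times\G$ equals $\|S\|\,\|f_1\|\,\|f_2\|\,\|\psi\|_1<\infty$, exactly as in Lemma~\ref{lem:convolution_rad}; for $1<p\le\infty$ one uses instead that the relevant coefficient function $g\mapsto\ip{\tpi(g)Sf_1}{f_2}$ lies in the Lebesgue space dual to the one containing $\psi$ (Theorem~\ref{thm:Intp1}) and applies Hölder's inequality. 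This interchange is the one genuine obstacle; everything else is the unimodular change of variables together with the commutation $S\piL(k)=\piL(k)S$.
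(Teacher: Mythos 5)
Your proof is correct and follows essentially the same route as the paper's: the unimodular change of variables $g\mapsto gk$ combined with the radiality identity $\tpi(k)S=S$ shows that $\psi\ast_\pi S$ is unchanged under right $K$-translates of $\psi$, and averaging over $K$ then produces the right $K$-average $\psi^{\#}$. Your additional remarks—the Fubini justification and the observation that it must be the \emph{right} average rather than the left—simply make explicit what the paper's terser proof leaves implicit.
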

\begin{proof} 
    This follows in the same way as above by noting that for all $k\in K$,
    \begin{align*} 
        \psi\ast_\pi S & = \int_\G \psi (x)\wpi (x)S\haar{x} 
          = \int_\G \psi (xk)\wpi (x)S\haar{x} \\
        & = \int_\G \psi (x)\wpi (xk^{-1})S\haar{x}
          = \int_\G \psi (x)\wpi (x)S\haar{x}
    \end{align*}
and then averaging over $K$.
\end{proof}

\begin{lemma}\label{lem:radialop_invar}
    Let $S\in \bdd$ be radial. Then for all $g\in \G$,
    \[
        \actopLnoouterparens{g}{S}=\actopLnoouterparens{g\cdot 0}{S}.
    \]
\end{lemma}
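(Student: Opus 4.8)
The plan is to reduce an arbitrary group element $g$ to the involution $\tau_z$ at $z = g\cdot 0$, up to an element of $K$, and then to exploit radiality of $S$ together with the fact that passing to $\tpi$ (i.e.\ conjugating) annihilates all phase factors. First I would set $z = g\cdot 0$ and argue exactly as in the proof of Lemma~\ref{lem:radialf_invar}: since $\tau_z\cdot(g\cdot 0) = \tau_z(z) = 0$, the automorphism $\tau_z g$ fixes the origin, so $\tau_z g = k$ for some $k\in K$ when all three are viewed as elements of $\mathrm{Aut}(\B^n)$. As $\tau_z$ is an involution this gives the decomposition $g = \tau_z k$.

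Next I would lift this identity to the level of operators on $\Berg$. Because $\pi$ is (up to the center $Z$, which acts by a unimodular scalar by Schur's lemma) multiplicative, and because $\pi(z)$ and $\pi(\tau_z)$ differ only by the unimodular phase $j(\tau_z,0)/|j(\tau_z,0)|$ noted after \eqref{def:piz}, we obtain $\pi(g) = c\,\pi(\tau_z)\pi(k)$ for some $c\in\C$ with $|c|=1$. The decisive observation is that $c$ and every other such phase cancel when we form
\[
    \tpi(g)S = \pi(g)S\pi(g)^{-1} = \pi(\tau_z)\bigl(\pi(k)S\pi(k)^{-1}\bigr)\pi(\tau_z)^{-1},
\]
since each scalar appearing in $\pi(g)$ appears conjugated in $\pi(g)^{-1}$.

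Finally, radiality of $S$ means $\pi(k)S\pi(k)^{-1} = S$ for all $k\in K$, so the inner factor collapses and
\[
    \tpi(g)S = \pi(\tau_z)S\pi(\tau_z)^{-1} = \tpi(\tau_z)S = \tpi(z)S = \tpi(g\cdot 0)S,
\]
using the identity $\tpi(\tau_z) = \tpi(z)$ established after \eqref{def:piz}. The only point demanding care—and the one I regard as the main (mild) obstacle—is the bookkeeping of the phase factors coming from the center of $\G$ and from the absolute value in the definition \eqref{def:piz} of $\pi(z)$. The argument succeeds precisely because all of these factors are unimodular and hence vanish under conjugation, so they never need to be tracked explicitly; this is exactly why the statement is framed in terms of $\tpi$ rather than $\pi$ itself.
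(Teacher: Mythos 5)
Your proof is correct and follows essentially the same route as the paper: decompose $g=\tau_{g\cdot 0}k_g$ with $k_g\in K$ (as in Lemma~\ref{lem:radialf_invar}), collapse the inner conjugation by $\pi(k_g)$ using radiality of $S$, and identify $\tpi(\tau_z)$ with $\tpi(z)$ via cancellation of phase factors under conjugation. The paper's proof is just a terser version of this, leaving the phase bookkeeping (which you handle explicitly, and correctly) to the remark following \eqref{def:piz}.
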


\begin{proof}
    Note that for all $g\in \G$, we have $g=\Act{g\cdot 0}k_g$ for some $k_g\in K$.  Now write $z=g\cdot 0$. Then, since $S$ is radial, 
    \[
        \actopLnoouterparens{g}{S}=\piL(g)S\piL(g^{-1})=\piL(\Act{z})\piL(k_g)S\piL(k_g^{-1})\piL(\Act{z})=\piL(\Act{z})S\piL(\Act{z}). \qedhere
    \]
\end{proof}

\subsection{Toeplitz operators as convolutions}

Now consider the rank-one operator $\Phi=\one\otimes \overline{\one}\in\bdd$. In the following we write $\one$ for $\one_G$.
\begin{lemma}\label{lem:PhiK0} 
    Let $f\in \cA (\B^n)$. Then $\Phi f = f(0)\one$. 
\end{lemma}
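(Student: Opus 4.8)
The plan is to identify the constant function $\one$ with the reproducing kernel at the origin and then invoke the reproducing property of $\Berg$. First, I would unwind the definition of the rank-one operator: since $\tens{u}{v}$ acts by $w\mapsto \ip{w}{v}u$, the operator $\Phi = \tens{\one}{\one}$ acts by $\Phi f = \ip{f}{\one}\,\one$ for every $f\in\Berg$. Hence the statement $\Phi f = f(0)\one$ is equivalent to the single scalar identity $\ip{f}{\one} = f(0)$, and this is what I would aim to prove.

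The key observation is that the constant function $\one$ is nothing but the (unnormalized) reproducing kernel $K_0 = K^0_0$ at the origin. Indeed, specializing the formula $K^\alpha_z(w) = (1-\overline{z}w)^{-(n+1+\alpha)}$ to $\alpha = 0$ and $z = 0$ gives $K_0(w) = (1-0)^{-(n+1)} = 1 = \one(w)$ for all $w\in\B^n$. With this identification in hand, the reproducing property $f(z) = \ip{f}{K_z}$, evaluated at $z = 0$, yields $\ip{f}{\one} = \ip{f}{K_0} = f(0)$. Combining the two steps gives $\Phi f = \ip{f}{\one}\,\one = f(0)\,\one$, as claimed.

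There is no genuine obstacle here; the entire content is the identification $\one = K_0$, after which the reproducing property does all the work. In effect, this is just the mean-value property of holomorphic functions on the ball (the average of $f$ against normalized Lebesgue measure equals $f(0)$) repackaged through the Bergman kernel. The only point to keep straight is that the inner product and the reproducing kernel are those of the unweighted space $\Berg$ (i.e.\ $\alpha = 0$), which is exactly the setting in which $\Phi\in\bdd$ lives.
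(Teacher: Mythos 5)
Your proof is correct, but it takes a genuinely different (equally short) route from the paper's. The paper expands $f$ in the monomial orthonormal basis $\{e_\vm\}$ from \eqref{eq:onb}: the coefficient of $e_0=\one$ is $f(0)$ (the Bergman expansion agrees with the Taylor expansion, and all non-constant monomials vanish at the origin), and orthogonality $\ip{e_\vm}{\one}=0$ for $\vm\neq 0$ kills every other term when $\Phi f=\ip{f}{\one}\one$ is applied. You instead identify $\one=K_0$ (valid since $K_z(w)=(1-\bar{z}w)^{-(n+1)}$ equals $1$ at $z=0$) and invoke the reproducing property $\ip{f}{K_0}=f(0)$. Both arguments reduce the lemma to the scalar identity $\ip{f}{\one}=f(0)$; yours obtains it from the kernel, the paper's from orthogonality plus pointwise evaluation of the basis expansion. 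Your route is slightly more intrinsic --- it needs no basis, makes the underlying mean-value property explicit, and transfers verbatim to the weighted spaces $\Bergalph$, where $K^\alpha_0=\one$ as well --- while the paper's is self-contained given \eqref{eq:onb} and does not appeal to the closed-form kernel. You also correctly flag the one point worth checking, namely that the inner product, kernel, and normalization ($\|\one\|=1$, so $\Phi$ is the orthogonal projection onto constants) are all those of the unweighted space $\Berg$.
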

\begin{proof} 
    Write $f = f(0)\one + \sum_{m\not=0} \ip{f}{e_m}{e_m}$ where $\{e_m\}$ is the orthogonal basis \eqref{eq:onb}. Then $\Phi (f) = f(0)\one + \sum_{m} \ip{f}{e_m}\ip{e_m}{\one}\one= f(0)\one $.
\end{proof}

Or goal is to show that $T_a = a*\Phi$ (see Lemma~\ref{lem:Toeplitz_convolution}).  

\begin{lemma}\label{lem:R_translation}
    Let $g\in \G$ and $f\in \Berg$. Then
    $
        \actopL{g}{\Phi}f=\varphi(g\cdot 0)f(g\cdot 0)K_{g\cdot 0}.
    $
    In particular, $\actopL{k}{\Phi} f =\Phi f$ for all $k\in K$, and $\Phi$ is radial.
\end{lemma}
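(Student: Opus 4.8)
The plan is to compute $\actopL{g}{\Phi}f = \pi(g)\Phi\pi(g)^{-1}f$ by applying the three factors in turn and reading off cocycle factors from Lemma~\ref{lem:cocycle_normalized_ker}. First I would evaluate $\pi(g)^{-1}f = \pi(g^{-1})f$ at the origin: from the defining formula $(\pi(g^{-1})f)(z) = j(g,z)f(g\cdot z)$ (the $\alpha = 0$ case of the Bergman space representation), setting $z = 0$ gives $(\pi(g)^{-1}f)(0) = j(g,0)f(g\cdot 0)$. By Lemma~\ref{lem:PhiK0}, $\Phi$ sends any $h\in\Berg$ to $h(0)\one$, so
\[
    \Phi\pi(g)^{-1}f = j(g,0)\,f(g\cdot 0)\,\one.
\]

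Next I would apply $\pi(g)$ and pull out the scalar $j(g,0)f(g\cdot 0)$, reducing the problem to computing $\pi(g)\one$. Since $\one$ is the constant function, $(\pi(g)\one)(w) = j(g^{-1},w)$, and Lemma~\ref{lem:cocycle_normalized_ker}(5) (with $\alpha = 0$) identifies this with $\overline{j(g,0)}\,K(w,g\cdot 0) = \overline{j(g,0)}\,K_{g\cdot 0}(w)$. Combining the two steps yields
\[
    \actopL{g}{\Phi}f = j(g,0)\,\overline{j(g,0)}\,f(g\cdot 0)\,K_{g\cdot 0} = |j(g,0)|^2\,f(g\cdot 0)\,K_{g\cdot 0}.
\]
Finally, the identity $|j(g,0)|^2 = \varphi(g\cdot 0)$ recorded in \eqref{eq:vphi0} converts this into the claimed expression $\varphi(g\cdot 0)f(g\cdot 0)K_{g\cdot 0}$.

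For the ``in particular'' assertions I would specialize to $g = k\in K$. Since $K$ stabilizes the origin, $k\cdot 0 = 0$, while $\varphi(0) = (1-|0|^2)^{n+1} = 1$ and $K_0 = \one$ (as $K(w,0)\equiv 1$). Substituting gives $\actopL{k}{\Phi}f = f(0)\one = \Phi f$ for all $f$, again by Lemma~\ref{lem:PhiK0}; hence $\actopL{k}{\Phi} = \Phi$ for every $k\in K$, which is precisely the statement that $\Phi$ is radial.

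There is no serious obstacle here: the computation is a direct substitution. The only point requiring care is the bookkeeping of the cocycle factors---in particular matching the factor $j(g,0)$ coming from evaluating $\pi(g)^{-1}f$ at $0$ with the conjugate factor $\overline{j(g,0)}$ produced by Lemma~\ref{lem:cocycle_normalized_ker}(5) when computing $\pi(g)\one$, so that they combine into $|j(g,0)|^2$ and connect to $\varphi$ via \eqref{eq:vphi0}.
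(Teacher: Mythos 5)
Your proof is correct and follows essentially the same route as the paper: evaluate $\Phi\pi(g)^{-1}f = j(g,0)f(g\cdot 0)\one$ via Lemma~\ref{lem:PhiK0}, compute $\pi(g)\one$ using Lemma~\ref{lem:cocycle_normalized_ker}(5) to get $\overline{j(g,0)}K_{g\cdot 0}$, and combine via \eqref{eq:vphi0}; the radiality claim is handled by the same specialization $g=k\in K$.
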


\begin{proof}
    Let $f\in \Berg$ and $g\in \G$. By Lemma~\ref{lem:PhiK0} we have   $\Phi f=f(0)\one$. Hence 
    \[
        \Phi \piL(g^{-1})f=\ip{\piL(g^{-1})f}{\one}=j(g,0)f(g\cdot0).
    \]
    Thus
    \begin{align*}
        \actopL{g}{\Phi}f(z) &= (\piL(g)\Phi\piL(g)^{-1})f(z)\\
        &= j(g^{-1},z)j(g,0)f(g\cdot 0)\\
        &=K_{g\cdot 0}(z)|j(g,0)|^2 f(g\cdot 0) \ \ \text{ (by Lemma~\ref{lem:cocycle_normalized_ker})}\\
        &=K_{g\cdot 0}(z)\varphi(g\cdot 0) f(g\cdot 0). \qedhere
    \end{align*}
    If $k\in K$ then $\varphi (k\cdot 0) f(k\cdot 0) K_{k\cdot 0} = f(0)\one = \Phi (f)$.
\end{proof}

\begin{lemma}\label{lem:Toeplitz_convolution}
    Let $a\in \bddf$. Then 
    $
         T_a = a\ast_\pi  \Phi 
    $.
\end{lemma}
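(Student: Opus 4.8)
The plan is to verify the identity weakly, testing both operators against arbitrary pairs $f_1,f_2\in\Berg$; this sidesteps the operator-valued weak integral defining the convolution. Since $\Phi=\one\otimes\overline{\one}$ is rank one, hence trace class, and $a\in\bddf$, the convolution $a\ast_\pi\Phi$ is well defined by Proposition~\ref{prop:trace_convolutions}, and the defining property of the weak integral with respect to the functionals $\lambda_{f_1,f_2}$ gives
\[
    \ip{(a\ast_\pi\Phi)f_1}{f_2}=\int_\G a(g)\ip{\actopL{g}{\Phi}f_1}{f_2}\haar{g},
\]
where we identify the symbol $a$ with the corresponding right-$K$-invariant function $a\circ p$ on $\G$.

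First I would evaluate the integrand using Lemma~\ref{lem:R_translation}, which gives $\actopL{g}{\Phi}f_1=\varphi(g\cdot 0)f_1(g\cdot 0)K_{g\cdot 0}$; pairing with $f_2$ and invoking the reproducing property in the form $\ip{K_{g\cdot 0}}{f_2}=\overline{f_2(g\cdot 0)}$ yields
\[
    \ip{\actopL{g}{\Phi}f_1}{f_2}=\varphi(g\cdot 0)f_1(g\cdot 0)\overline{f_2(g\cdot 0)}.
\]
The key structural point is that this integrand depends on $g$ only through $g\cdot 0$ — a consequence of the radiality of $\Phi$ (the final assertion of Lemma~\ref{lem:R_translation}) together with the right-$K$-invariance of $a$ — so it descends to a function on $\B^n$. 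Applying the measure identification $\int_\G F(g)\haar{g}=\int_{\B^n}f(z)\inv{z}$ for $F=f\circ p$, I would then rewrite the pairing as
\[
    \ip{(a\ast_\pi\Phi)f_1}{f_2}=\int_{\B^n}a(w)\varphi(w)f_1(w)\overline{f_2(w)}\inv{w}.
\]

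Next I would simplify the weight using \eqref{eq:vphi0} and \eqref{eq:InvM}: since $\varphi(w)=(1-|w|^2)^{n+1}$ and $\inv{w}=(1-|w|^2)^{-(n+1)}\dv(w)$, the two factors cancel, giving $\varphi(w)\inv{w}=\dv(w)$ and hence
\[
    \ip{(a\ast_\pi\Phi)f_1}{f_2}=\int_{\B^n}a(w)f_1(w)\overline{f_2(w)}\dv(w)=\ip{af_1}{f_2}.
\]
Finally, since $f_2\in\Berg$ and $P$ is the self-adjoint Bergman projection, $\ip{af_1}{f_2}=\ip{P(af_1)}{f_2}=\ip{T_af_1}{f_2}$; as $f_1,f_2$ were arbitrary this yields $a\ast_\pi\Phi=T_a$. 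I expect the only real subtlety to be bookkeeping rather than a genuine obstacle: one must check that the integrand is genuinely in $L^1$ so that the descent to $\B^n$ is legitimate, which holds because $a$ is bounded and $f_1\overline{f_2}\in L^1(\B^n,\dv)$ for holomorphic $f_1,f_2\in\Berg$, and one must keep the reproducing-kernel conjugation convention consistent when applying the reproducing property.
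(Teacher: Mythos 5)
Your proposal is correct and follows essentially the same route as the paper's own proof: pairing $a\ast_\pi\Phi$ weakly against $f_1,f_2\in\Berg$, computing the integrand via Lemma~\ref{lem:R_translation} and the reproducing property, descending from $\G$ to $\B^n$, and cancelling $\varphi$ against the density of the invariant measure to recognize $\ip{T_af_1}{f_2}$. The extra care you take (integrability of the integrand, self-adjointness of $P$) only makes explicit steps the paper leaves implicit.
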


\begin{proof} 
    Let $f_1,f_2\in \Berg$ and $g\in G$. Then, by Lemma~\ref{lem:R_translation}:  
    \[
        \ip{\actopL{g}{\Phi}f_1}{f_2}=\varphi(g\cdot 0) f_1(g\cdot 0)\ip{K_{g\cdot 0}}{f_2}\\
            = \varphi(g\cdot 0) f_1(g\cdot 0)\overline{f_2(g\cdot 0)}.
    \]
    It follows that
    \begin{align*}
        \ip{(a\Last  \Phi)f_1}{f_2} &= \int_\G a(g\cdot 0) \ip{\actopL{g}{\Phi}f_1}{f_2} \haar{g}\\
        &= \int_{\B^n} a(w)\varphi(w)f_1(w)\overline{f_2(w)} \inv{w}\\
        &= \int_{\B^n} a(w)f_1(w)\overline{f_2(w)} \ dA(w)\\
        &= \ip{T_af_1}{f_2},
    \end{align*}
which implies the claim.  
\end{proof}

\subsection{Berezin transform of an operator as a convolution}
In this section, we show that the Berezin transform of an operator $S\in \bdd$ can be written as a convolution of operators.  First, we need to calculate the result of taking the convolution of $\Phi$ with $\Phi$:

\begin{lemma}
    Recall the finite-rank operator $\Phi=\one \otimes \overline{\one}$. Then $\Phi\Last \Phi=\varphi$.
\end{lemma}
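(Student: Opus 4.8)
The plan is to evaluate the operator convolution $\Phi\Last\Phi$, which by definition is the function on $\G$ given by $(\Phi\Last\Phi)(g)=\Tr\bigl(\Phi\,\tpi(g)\Phi\bigr)$, and then to identify the resulting right-$K$-invariant function on $\G$ with the function $\varphi$ on $\B^n$ under the standing correspondence $F=f\circ p$. There is no serious analytic content here; the computation is a direct evaluation using rank-one algebra and the reproducing property.

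First I would make the translate $\tpi(g)\Phi$ completely explicit. Lemma~\ref{lem:R_translation} computes $\actopL{g}{\Phi}f=\varphi(g\cdot 0)f(g\cdot 0)K_{g\cdot 0}$, which says precisely that
\[
    \tpi(g)\Phi=\varphi(g\cdot 0)\,\tens{K_{g\cdot 0}}{K_{g\cdot 0}}
\]
as a rank-one operator. Next I would compose with $\Phi=\tens{\one}{\one}$ using the elementary product rule $(\tens{u_1}{v_1})(\tens{u_2}{v_2})=\ip{u_2}{v_1}\tens{u_1}{v_2}$, which gives
\[
    \Phi\,\tpi(g)\Phi=\varphi(g\cdot 0)\,\ip{K_{g\cdot 0}}{\one}\,\tens{\one}{K_{g\cdot 0}}.
\]
Taking the trace via $\Tr(\tens{u}{v})=\ip{u}{v}$ then yields $(\Phi\Last\Phi)(g)=\varphi(g\cdot 0)\,\ip{K_{g\cdot 0}}{\one}\,\ip{\one}{K_{g\cdot 0}}$. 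The two remaining inner products are both equal to $1$: by the reproducing property applied to the constant function one has $\ip{\one}{K_{g\cdot 0}}=\one(g\cdot 0)=1$, and hence $\ip{K_{g\cdot 0}}{\one}=\overline{\one(g\cdot 0)}=1$. This leaves $(\Phi\Last\Phi)(g)=\varphi(g\cdot 0)$, which is exactly the function $\varphi$ under the identification $F=f\circ p$.

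The only points requiring care are bookkeeping rather than any real obstacle. I must confirm that the computed function genuinely descends to $\B^n$: since $\Phi$ is radial (Lemma~\ref{lem:R_translation} gives $\tpi(k)\Phi=\Phi$ for $k\in K$), one has $\tpi(gk)\Phi=\tpi(g)\tpi(k)\Phi=\tpi(g)\Phi$, so $g\mapsto(\Phi\Last\Phi)(g)$ is right-$K$-invariant and depends only on $g\cdot 0$, as required. As a sanity check, the whole calculation collapses to a single matrix coefficient: using that $\Phi$ is the rank-one projection onto $\C\one$, we get $\Tr(\Phi\,\tpi(g)\Phi)=\ip{\tpi(g)\Phi\,\one}{\one}=\bigl|\ip{\pi(g^{-1})\one}{\one}\bigr|^2$, and since $(\pi(g^{-1})\one)(w)=j(g,w)$ the reproducing property gives $\ip{\pi(g^{-1})\one}{\one}=j(g,0)$, so the value is $|j(g,0)|^2=\varphi(g\cdot 0)$ by \eqref{eq:vphi0}, confirming the identity.
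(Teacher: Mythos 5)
Your proof is correct, and it takes a modestly different route from the paper's. The paper computes $\Tr(\Phi\,\tpi(g)\Phi)$ directly against the orthonormal basis $\{e_{\bm}\}$, using $\Phi^*=\Phi$ to collapse the sum to the single matrix coefficient $|\ip{\one}{\pi(g)\one}|^2=|j(g^{-1},0)|^2$, and then needs \eqref{eq:vphi0} together with Lemma~\ref{lem:radialf_invar} (the identity $F(g)=F(g^{-1})$ for radial $F$) to turn $\varphi(g^{-1}\cdot 0)$ into $\varphi(g\cdot 0)$. You instead take Lemma~\ref{lem:R_translation} as the engine, writing $\tpi(g)\Phi=\varphi(g\cdot 0)\,\tens{K_{g\cdot 0}}{K_{g\cdot 0}}$, and finish with rank-one composition and the reproducing property; since the factor $\varphi(g\cdot 0)$ is already in place, no inversion symmetry is needed, and the two remaining inner products $\ip{\one}{K_{g\cdot 0}}$ and $\ip{K_{g\cdot 0}}{\one}$ are trivially $1$. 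This buys a cleaner endgame (the answer appears with $g$, not $g^{-1}$) at the cost of invoking a previously proved structural lemma, whereas the paper's computation is self-contained modulo the cocycle identities. Your closing sanity check --- $\Tr(\Phi\,\tpi(g)\Phi)=|\ip{\pi(g^{-1})\one}{\one}|^2=|j(g,0)|^2$ --- is in substance the paper's own proof, just written with $g^{-1}$ in place of $g$, which is precisely why the paper needs Lemma~\ref{lem:radialf_invar} and you do not. Your explicit verification of right-$K$-invariance is harmless but not strictly necessary, since the computed value $\varphi(g\cdot 0)$ visibly depends only on $g\cdot 0$.
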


\begin{proof} 
    Let $\{e_{\textbf{m}}\}_{\bm}$ be the orthonormal basis as in \eqref{eq:onb}. Using that $\Phi^* = \Phi$ we get, for all $g\in\G$, that
    \begin{align*}
        \Tr(\Phi\actopL{g}{\Phi}) & = \sum_{\bm} \ip{\Phi \actopL{g}{\Phi} e_\bm}{e_\bm}\\
                                  & =\sum_{\bm} \ip{ (\tens{\pi (g)\one}{\pi (g)\one}) e_\bm}{\Phi e_\bm}\\
                                  & =\ip{(\tens{\pi (g)\one}{\pi(g)\one}) \one}{\one}\\
                                  & =|j(g^{-1},0)|^2,
    \end{align*}
    and the claim follows from \eqref{eq:vphi0} and Lemma~\ref{lem:radialf_invar}. 
\end{proof}

We now realize the Berezin transform as convolution with the rank-one operator $\Phi$. 
\begin{lemma}
    Let $S\in \bdd$. Then $
        B(S)=S\Last \Phi$.
\end{lemma}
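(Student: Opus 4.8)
The plan is to evaluate both sides at a point $z = g\cdot 0 \in \B^n$ and reduce everything to the rank-one calculus. By definition $(S\Last \Phi)(z) = \Tr(S\,\tpi(z)\Phi)$, where $\tpi(z)\Phi = \pi(z)\Phi\pi(z)^{-1}$; this convolution is well defined and bounded by Lemma~\ref{lem:trace_class_bounded_operator_convolution_inequality}, since $\Phi = \one\otimes\overline{\one}$ is rank one (hence trace class) and $S$ is bounded. Thus the goal reduces to showing $\Tr(S\,\tpi(z)\Phi) = \ip{Sk_z}{k_z}$.

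First I would compute $\tpi(z)\Phi$. Applying the rank-one intertwining identity $\tpi(x)(u\otimes\overline{v}) = (\pi(x)u)\otimes\overline{(\pi(x)v)}$ recorded in Section~\ref{sec:general QHA} (restricted to the diagonal) to $\Phi = \one\otimes\overline{\one}$ gives $\tpi(z)\Phi = (\pi(z)\one)\otimes\overline{(\pi(z)\one)}$.

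The key step is to identify $\pi(z)\one$ with the normalized reproducing kernel $k_z$. From \eqref{def:piz} one has $\pi(z)\one(w) = |j(\tau_z,0)|\,K_z(w)\,\one(\tau_z\cdot w) = |j(\tau_z,0)|\,K_z(w)$, and by Lemma~\ref{lem:cocycle_normalized_ker}(2) with $\alpha = 0$ (using $\tau_z\cdot 0 = z$) we have $|j(\tau_z,0)| = (1-|z|^2)^{(n+1)/2} = 1/\|K_z\|$. Comparing with \eqref{eq:kz}, this is exactly $\pi(z)\one = k_z$, so that $\tpi(z)\Phi = k_z\otimes\overline{k_z}$.

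Finally I would evaluate the trace. Since $S(k_z\otimes\overline{k_z}) = (Sk_z)\otimes\overline{k_z}$ and $\Tr(u\otimes\overline{v}) = \ip{u}{v}$ for rank-one operators, we obtain $\Tr(S\,\tpi(z)\Phi) = \ip{Sk_z}{k_z} = B(S)(z)$, which is the claim. The only point requiring real care is the identification $\pi(z)\one = k_z$; once this is in hand the computation is immediate. It is also worth noting that, because $\pi(z)$ and $\pi(g)$ differ only by a phase factor while $\tpi$ conjugates that phase away, the value of the convolution at $z = g\cdot 0$ is independent of the chosen lift, so the resulting function on $\B^n$ is well defined.
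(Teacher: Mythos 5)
Your proof is correct, but it is assembled from different ingredients than the paper's. The paper invokes Lemma~\ref{lem:R_translation} (namely $\actopL{g}{\Phi}f=\varphi(g\cdot 0)f(g\cdot 0)K_{g\cdot 0}$) and then computes $\Tr(S\actopL{g}{\Phi})$ by expanding in the orthonormal basis $\{e_{\bm}\}$ and using $K_z=\sum_{\bm}\overline{e_{\bm}(z)}e_{\bm}$, so the normalization $\varphi(z)=1/\|K_z\|^2$ enters only at the last step. You instead establish the equivalent, more structural fact $\tpi(z)\Phi=k_z\otimes\overline{k_z}$: the identification $\pi(z)\one=k_z$ (from \eqref{def:piz} and Lemma~\ref{lem:cocycle_normalized_ker}) together with the conjugation rule for rank-one operators absorbs the normalization into $k_z$ from the outset, and the trace is then evaluated by the rank-one identity $\Tr\bigl(S(u\otimes\overline{v})\bigr)=\ip{Su}{v}$, with no basis expansion. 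The two routes rest on the same cocycle identities --- Lemma~\ref{lem:R_translation} is precisely the statement $\tpi(g)\Phi=\varphi(z)\,K_z\otimes\overline{K_z}=k_z\otimes\overline{k_z}$ in disguise --- but yours makes the coherent-state picture (translates of $\Phi$ are the projections onto normalized reproducing kernels) explicit, which is arguably the cleaner way to see why the Berezin transform is a convolution, while the paper's version has the advantage of reusing a lemma it already needed for $T_a=a\ast_\pi\Phi$. One point to tighten: your closing remark that $\pi(g)$ and $\pi(z)$ differ only by a phase is literally true for lifts of $\tau_z$, but for an arbitrary $g$ with $g\cdot 0=z$ one has $\pi(g)=\lambda\,\pi(z)\pi(k)$ with $k\in K$ and $|\lambda|=1$, so one should also use that $\pi(k)\one$ is a unimodular constant times $\one$ (equivalently, that $\Phi$ is radial, as recorded in Lemma~\ref{lem:R_translation}) to conclude $\tpi(g)\Phi=\tpi(z)\Phi$; since the paper makes the same loose remark, this is a quibble rather than a gap.
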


\begin{proof} 
    Let $\{e_{\textbf{m}}\}_{\bm}$ be the orthonormal basis as in \eqref{eq:onb}. Then
    \begin{equation}\label{eq:Kz}
        K_z = \sum_{\bm} \overline{e_\bm(z)}e_\bm 
    \end{equation}
    by \eqref{eq:Konb}. It follows, with $z=g\cdot 0$, that
    \begin{align*}
        \Tr (S\actopL{g}{\Phi}) &= \sum_{\bm} \ip{S\actopL{g}\Phi e_\bm}{e_\bm}\\
        &= \sum_{\bm} \ip{\actopL{g}\Phi e_\bm}{S^*e_\bm}\\
        &= \sum_{\bm} \varphi (z) e_\bm (z) \ip{K_z}{S^*e_\bm}\\
        &=   \varphi (z) \ip{SK_z}{K_z} \qquad\text{by \eqref{eq:Kz}}\\ 
        &=B(S)(z) \qquad \text{by \eqref{eq:vphi0} and \eqref{eq:BerS}} \qedhere.
    \end{align*}
 %   Recall the monomial basis $\{e_\vm\}$ for $\Berg$. Then, $e_{\textbf{0}} = \one$, where $\textbf{0} = (0, \cdots, 0)$, 
  %  We thus have, for all $\vm\in \N_0^n$ and all $g\in G$, that
%\[
   %     \ip{\Phi\actopL{g}{S}e_\vm}{e_\vm} 
      %               = \ip{\actopL{g}{S}e_\vm}{\one}\ip{\one}{e_\vm} 
         %            = \left\{ \begin{matrix}
            %                        0 & \text{ if $\vm \neq \textbf{0}$}  \\
               %                     \langle \widetilde{\pi}_g(S)\one,\one\rangle &  
                  %                     \text{ if $\vm = \textbf{0}$}
                     %           \end{matrix}.
                        %       \right. \, .\]
  %  Hence  
 %   \begin{align*}
 %       (S\Last\Phi)(g) & = (\Phi\ast S)(g^{-1}) \\
%                       & = \Tr(\Phi \widetilde\pi_{g^{-1}}(S)) \\
 %                      & = \ip{\actopL{g^{-1}}{S}\one}{\one} \\
  %                 & = \langle S\piL(g)\one, \piL(g)\one \rangle  \\
%                    & = \ip{Sk_{g\cdot 0}}{k_{g\cdot 0}}=B(S)(g\cdot 0). \qedhere \\
 %   \end{align*}
\end{proof}

\subsection{Properties of radial convolutions} 
Here we observe certain properties of convolutions of radial operators. Some of these properties stem from the fact that $\G/K$ is a commutative space. To prove the following lemma, we use convergence properties of QHA convolutions, which are discussed in detail in Appendix \ref{sec:general QHA} (in particular, see Lemma \ref{lem:convergence}).

\begin{lemma}\label{lem:associativity_radial}
    Let $S\in \bdd$ and $A,B\in \traceop$. Then  %$\psi\in \LoneG$ and let $a\in \bddfG$. Then
    \begin{enumerate}
        \item If $H\subseteq \G$ is a subgroup and $S$ is $H$ invariant, then $S\Last A$ is left $H$ invariant and $A\Last S$ is right $H$ invariant. In particular, if $S$ and $A$ are  radial, then $S\Last A$ is a radial function on $\B^n$.     
        \item If $S$ and $A$ are radial, then $S\Last A = A\Last S$.
        \item If $A$ and $B$ are radial, then $(S\Last A)\Last B= (S\Last B)\Last A$.   
        \item If $A$, $B$, and $S$ are radial, then $(S\Last A)\Last B= (A\Last B)\Last S$.   
    \end{enumerate}
\end{lemma}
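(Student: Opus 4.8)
The plan is to prove the four parts in order, drawing on the commutation relation \eqref{eq:operator_convolution_commutation}, the associativity of Lemma~\ref{lem:associativity}, and the fact that $\B^n=\SU(n,1)/\U(n)$ is a commutative space. For (1), I would argue straight from the definition $(S\Last A)(g)=\Tr(S\,\wpi(g)A)$. Since $\wpi(hg)A=\pi(h)\big(\wpi(g)A\big)\pi(h)^{-1}$, cyclicity of the trace gives $(S\Last A)(hg)=\Tr\big(\pi(h)^{-1}S\pi(h)\,\wpi(g)A\big)$; if $S$ is $H$-invariant then $\pi(h)^{-1}S\pi(h)=S$ and this equals $(S\Last A)(g)$, so $S\Last A$ is left $H$-invariant. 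The mirror computation $(A\Last S)(gh)=\Tr\big(A\,\wpi(g)\,\pi(h)S\pi(h)^{-1}\big)$ yields right $H$-invariance of $A\Last S$. For the final assertion, take $H=K$: then $S\Last A$ is left $K$-invariant because $S$ is radial and right $K$-invariant because $A$ is radial (the same statement with the two operators interchanged), hence bi-$K$-invariant, i.e. a radial function on $\B^n$.

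For (2), I would invoke \eqref{eq:operator_convolution_commutation}, which says $S\Last A=(A\Last S)^\vee$. By part (1) the function $A\Last S$ is radial, so Lemma~\ref{lem:radialf_invar} gives $(A\Last S)(g)=(A\Last S)(g^{-1})$; that is $(A\Last S)^\vee=A\Last S$, and therefore $S\Last A=A\Last S$.

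For (3) and (4) the idea is to rebracket the triple convolution and then feed in the commutativity from (2). Using the associativity relations of Lemma~\ref{lem:associativity} one rewrites $(S\Last A)\Last B=S\Last(A\Last B)$. For (3), $A$ and $B$ are radial, so part (2) gives $A\Last B=B\Last A$ and hence $S\Last(A\Last B)=S\Last(B\Last A)=(S\Last B)\Last A$. For (4), all three operators are radial; then $A\Last B$ is a radial function by part (1), and since $S$ is radial the convolution of the radial operator $S$ with the radial function $A\Last B$ commutes---the operator--function analogue of (2), again a consequence of the commutative-space property---so $S\Last(A\Last B)=(A\Last B)\Last S$, which is the claim.

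The step I expect to be the main obstacle is making the rebracketing in (3)--(4) precise, because the intermediate object $A\Last B$ is a \emph{function} while $S$ is an operator, so $S\Last(A\Last B)$ is an operator--function convolution and one must check both that Lemma~\ref{lem:associativity} extends to this mixed association and that the relevant convolutions commute. A way to sidestep the operator--function convolution is to first reduce to the case where all operators are radial: since $B$ is radial, Lemma~\ref{lem:ConAst} gives $(S\Last A)\Last B=(S\Last A)^{\#}\Last B$, and averaging $\pi(k)$ over $K$ identifies $(S\Last A)^{\#}=S^{\#}\Last A$ with $S^{\#}$ radial; testing the resulting operators against a trace-class $C$ via Lemma~\ref{lem:duality} and replacing $C$ by its radialization (legitimate because the operators in question are radial) reduces everything to radial functions, where one closes using (2) and the commutativity of convolution of bi-$K$-invariant functions.
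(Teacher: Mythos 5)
Parts (1) and (2) of your proposal are correct and are essentially the paper's own proof: trace cyclicity for (1), and the identity \eqref{eq:operator_convolution_commutation} combined with Lemma~\ref{lem:radialf_invar} for (2). The gap is in (3) and (4): both rest on the rebracketing $(S\Last A)\Last B=S\Last(A\Last B)$, which you attribute to Lemma~\ref{lem:associativity}. That lemma does not contain this statement: both of its parts, $(\psi\Last S)\Last A=\psi\Last(S\Last A)$ and $(a\Last A)\Last B=a\Last(A\Last B)$, have a \emph{function} as the leftmost factor, whereas in $(S\Last A)\Last B$ the leftmost factor is the operator $S$. Moreover, ``operator convolved with a function'' is never defined in the paper; the only sensible reading of $S\Last(A\Last B)$ is $(A\Last B)\Last S$, and with that reading your identity is precisely assertion (4) of the lemma being proved, which requires $S$ to be radial. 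So in (3), where $S$ is an arbitrary bounded operator, you are assuming something strictly stronger than (4), and in (4) your argument is circular. Worse, the identity is genuinely \emph{false} for non-radial $S$: take $A=B=\Phi$. Then $(S\Last \Phi)\Last \Phi = B(S)\Last \Phi = T_{B(S)}$, which is non-radial whenever $S$ is non-radial (an operator is radial iff its Berezin transform is, and $T_a$ is radial iff $a$ is), while $(\Phi\Last \Phi)\Last S=\varphi\Last S$ is \emph{always} radial, since $\varphi$ is a radial function in $\Lone$ (Corollary to Lemma~\ref{lem:convolution_rad}). Hence no argument along these lines can prove (3).

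Your fallback reduction fails for the same reason. Lemma~\ref{lem:ConAst}, as its proof shows, lets you replace $\psi$ by its \emph{right} $K$-average $\psi_K(x)=\int_K \psi(xk)\,d\mu_K(k)$ when convolving against a radial operator (the $\psi^{\#}$ in its statement is a notational slip); since $A$ is radial, $S\Last A$ is already right-$K$-invariant, so this substitution gains nothing. The \emph{left}-average identity you actually invoke, $(S\Last A)\Last B=\Rad{(S\Last A)}\Last B$, would force $(S\Last A)\Last B$ to equal $(\Rad{S}\Last A)\Last B$ (your identification $\Rad{(S\Last A)}=\Rad{S}\Last A$ is correct), which is a radial operator because the function $\Rad{S}\Last A$ is bi-$K$-invariant; this is again contradicted by $T_{B(S)}$ above. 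The paper's actual route for (3) is different: it first proves the statement for $A=T_a$, $B=T_b$ with radial bounded symbols, writing $(T_a\Last \Phi)\Last T_b=(a\Last \varphi\Last b)\Last \Phi$ via $T_a=a\Last\Phi$ and the function-first associativity of Lemma~\ref{lem:associativity}, and swapping $a$ and $b$ by commutativity of convolution of bi-$K$-invariant functions (the commutative-space property you correctly identified as the engine); it then extends to general radial $A,B\in\traceop$ and general $S\in\bdd$ using SOT density of Toeplitz operators together with the convergence properties of Theorem~\ref{lem:convergence}. Part (4) is then deduced from (2) and (3); that deduction is the legitimate, non-circular way to obtain the rebracketed identity $(S\Last A)\Last B=(A\Last B)\Last S$, and it is only valid when $S$ too is radial.
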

\begin{proof}
    To prove (1), note that for all $h\in H$ and $g\in G$, 
    \begin{align*}
        S\Last A(h^{-1} g) &= \Tr(S\piL(h)^{-1}\piL(g)A\piL(g)^{-1}\piL (h) )\\
        &=\Tr(\piL(h)S\piL(h)^{-1}\piL(g)A\piL(g)^{-1})=(S\Last A)(g)
    \end{align*}
    and
    \[
        A\Last S (gh) = \Tr( A \pi (g) \piL(h)S \piL(h)^{-1}\piL(g)^{-1} )   
                      = \Tr( A \pi (g) S \piL(g)^{-1} )
                      = A\Last S (g).
    \]
    \medskip

    \noindent
    (2) We first use the fact, by \eqref{eq:operator_convolution_commutation}, it follows that $S\Last A(g) = A\Last S(g^{-1})$ for all $g\in G$.  Furthermore, we know that $S\Last A$ is radial by (1).  Lemma~\ref{lem:radialf_invar} then shows that $S\Last A = A\Last S$. 
    \medskip

    \noindent
    (3) First we consider $A=T_a$ and $B=T_b$, where $a$ and $b$ are radial functions in $\bddf$. Then
    \[
        (T_a\Last \Phi)\Last T_b=(a\Last \varphi)\Last (b\Last \Phi)=(a\Last \varphi\Last b )\Last \Phi=(b\Last \varphi\Last a )\Last \Phi
    \]
    by the commutativity of the convolutions of radial functions. Hence
    \[
        (T_a\Last \Phi)\Last T_b=(T_b\Last \Phi)\Last T_a
    \]
    Then, by the SOT density of radial Toeplitz operators in all radial operators (see Lemma~\ref{lem:convergence}), we have the result for general $B$:
    \[
        (T_a\Last \Phi)\Last B=(B\Last \Phi)\Last T_a,
    \]
    and a similar argument shows that
    \[
        (A\Last \Phi)\Last B=(B\Last \Phi)\Last A.
    \]
    Now, by taking $S=T_s$, $s\in \bddf$, we have from the above equality that
    \begin{align*}
        (T_s\Last A)\Last B &=(s\Last (\Phi\Last A))\Last B=s\Last ((A\Last \Phi)\Last B)\\
        &=s\Last ((B\Last \Phi)\Last A)=(T_s\Last B)\Last A.
    \end{align*}
    Therefore, we have $(S\Last A)\Last B= (S\Last B)\Last A$ by applying once more the SOT density of Toeplitz operators $\bdd$.   
    \medskip

    \noindent
    (4) By (2) and (3), we get that $(S\Last A)\Last B= (A\Last S)\Last B=(A\Last B)\Last S$, from which the claim follows.
\end{proof}

%========================
\section{\texorpdfstring{$\G$}{G}-uniform continuity of functions and operators}

\subsection{Left $G$-uniform continuity}
Here, we discuss left $G$-uniform continuity of functions and operators.
For $S\in \bdd$, the map $g\mapsto \actopLnoouterparens{g}{S}$ is continuous in the strong operator topology, but not necessarily in the norm topology.

\begin{definition}
    \label{def:cbul:functions}
    A function $a\in \bddf$ is said to be \textit{left-$\G$-uniformly continuous} if the map $G\rightarrow \bddf$, $g\mapsto \actfL{g}{a}$, is continuous  with respect to $\|\cdot\|_\infty$. We denote the set of all bounded  left-$\G$-uniformly continuous functions by $\cbufL$. 
\end{definition}

\begin{definition}
    \label{def:cbul:operators}
    An operator $S\in \bdd$ is said to be \textit{left-$\G$-uniformly continuous} if the map $G\rightarrow \bdd$, $g\mapsto \actopLnoouterparens{g}{S}$, is continuous with respect to the operator norm. The set of all bounded left-$\G$-uniformly continuous operators is denoted by $\cbuopL$.
\end{definition} 

The spaces $\cbufL$ and $\cbuopL$ are $C^*$-subalgebras of $\bddf$ and $\bdd$, respectively.

If  $\psi\in \LpG$, where $1\leq p<\infty$, and $a$ is a function on $\B^n$ such that $\psi\Last a$ is defined, then we have $\psi\Last a\in \cbufL$. This is because
\[
    \actfL{g}{(\psi\Last a)}=(\actfL{g}{\psi})\Last a
\]
and because $\G$ acts continuously on $\LpG$.

\begin{lemma} The following statements are true:
   \begin{enumerate}
        \item All compact operators are in $\cbuopL$.
        \item If $\psi\in \LoneG$ and $S\in \bdd$, then $\psi\Last S\in \cbuopL$.
        \item If $\psi\in \cbufL$ and $S\in \traceop$, then $\psi\Last S\in \cbuopL$.
    \end{enumerate}
\end{lemma}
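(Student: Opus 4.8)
The plan is to prove the three parts in the order (2), (3), (1), since the first two share a single argument and (1) needs a separate density step. In every case the goal is to control $\|\tpi(g)(\psi\Last S)-\tpi(h)(\psi\Last S)\|_{\op}$ as $g\to h$, and the engine is the translation identity $\tpi(g)(\psi\Last S)=(\actfL{g}{\psi})\Last S$ from Lemma~\ref{lem:assoc_Lone}(1), combined with an appropriate norm estimate for the convolution.

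For (2), with $\psi\in\LoneG$ and $S\in\bdd$, the operator $\psi\Last S$ is bounded by Lemma~\ref{lem:assoc_Lone}(3). Applying the translation identity and then that same bound gives
\[
    \|\tpi(g)(\psi\Last S)-\tpi(h)(\psi\Last S)\|_{\op}
    =\|(\actfL{g}{\psi}-\actfL{h}{\psi})\Last S\|_{\op}
    \le\|\actfL{g}{\psi}-\actfL{h}{\psi}\|_1\,\|S\|_{\op}.
\]
Because left translation is strongly continuous on $L^1(\G)$, the right-hand side tends to $0$ as $g\to h$, so $\psi\Last S\in\cbuopL$.

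For (3), with $\psi\in\cbufL$ and $S\in\traceop$, Proposition~\ref{prop:trace_convolutions} guarantees that $\psi\Last S$ is a well-defined bounded operator and, more importantly, that all of the convolution integrals below converge; this is exactly what lets me still invoke the translation identity of Lemma~\ref{lem:assoc_Lone}(1) even though $\psi$ is merely essentially bounded and not integrable. The same manipulation as in (2), now using the bound from Proposition~\ref{prop:trace_convolutions}, yields
\[
    \|\tpi(g)(\psi\Last S)-\tpi(h)(\psi\Last S)\|_{\op}
    \le\frac{1}{d_\pi}\,\|\actfL{g}{\psi}-\actfL{h}{\psi}\|_\infty\,\|S\|_1.
\]
Since $\actfL{g}{\psi}-\actfL{h}{\psi}=\actfL{h}{(\actfL{h^{-1}g}{\psi}-\psi)}$ and left translation is isometric on $L^\infty(\B^n)$, the first factor equals $\|\actfL{h^{-1}g}{\psi}-\psi\|_\infty$, which vanishes as $g\to h$ precisely because $\psi\in\cbufL$ is left-$\G$-uniformly continuous. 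Hence $\psi\Last S\in\cbuopL$.

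For (1), I would first reduce to rank-one operators. For $u,v\in\Berg$, unitarity of $\pi(g)$ gives $\tpi(g)(\tens{u}{v})=\tens{\pi(g)u}{\pi(g)v}$ (the diagonal of the tensor-product computation in the text), and since $\|\tens{a}{b}\|_{\op}=\|a\|\,\|b\|$, a single splitting yields
\[
    \|\tpi(g)(\tens{u}{v})-\tpi(h)(\tens{u}{v})\|_{\op}
    \le\|\pi(g)u-\pi(h)u\|\,\|v\|+\|u\|\,\|\pi(g)v-\pi(h)v\|.
\]
Strong continuity of $\pi$ makes the right-hand side vanish as $g\to h$, so every rank-one, and hence by linearity every finite-rank, operator lies in $\cbuopL$. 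Finally, the finite-rank operators are norm-dense in the compact operators and $\cbuopL$ is norm-closed (being a $C^*$-subalgebra of $\bdd$); the closure argument goes through because $\|\tpi(g)T\|_{\op}=\|T\|_{\op}$ for every $T$, so approximating a compact $S$ by finite-rank $S_n$ and running the usual three-$\varepsilon$ estimate places $S$ in $\cbuopL$.

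All the steps are short, so the only genuinely delicate point is the bookkeeping: confirming that the translation identity of Lemma~\ref{lem:assoc_Lone}(1) remains legitimate in (3), where $\psi$ is only bounded rather than integrable, and pairing the correct translation norm ($L^1$ in (2), $L^\infty$ in (3)) with the correct convolution estimate. Everything else is a routine repackaging of the strong continuity of $\pi$ and of the translation actions.
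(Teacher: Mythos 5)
Your proposal is correct and follows essentially the same route as the paper: the translation identity $\tpi(g)(\psi\Last S)=(\actfL{g}{\psi})\Last S$ paired with the appropriate convolution norm bound ($L^1$ for (2), the square-integrable trace-class bound for (3)) and the continuity of the translation action, plus the rank-one splitting and density/closedness argument for (1). The only difference is that you spell out the details the paper leaves implicit (the three-$\varepsilon$ closure step and the explicit choice of norm estimates), which does not change the substance of the argument.
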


\begin{proof}
    To prove (1), it is enough to show that all finite-rank operators are in $\cbuopL$. This is true by the strong continuity of the representation $\pi$; in fact, for all $f_1,f_2\in \Berg$ and $g_1,g_2\in \G$,
    \begin{align*}
        \|\actopL{g_1}{f_1\otimes f_2}-\actopL{g_2}{f_1\otimes f_2}\|
        \leq &\,\| (\pi(g_1)f_1-\pi(g_2)f_1)\otimes \pi(g_1)f_2 \| \\
             & + \|\pi(g_2)f_1\otimes (\pi(g_1)f_2-\pi(g_2)f_2)\|\\
        \leq &\, \|f_2\|\|\pi(g_1)f_1-\pi(g_2)f_1\|+ \|f_1\|\|\pi(g_1)f_2-\pi(g_2)f_2\|.
    \end{align*}
    To prove (2) and (3), we note that $\actopLnoouterparens{g}{(\psi\Last S)}=(\actfL{g}{\psi}) \Last S$ and that $G$ acts continuously on both spaces $\LoneG$ and $\cbufL$.
\end{proof}

We state the theorem below for the sake of completeness, and the proof, which we have omitted, follows the same ideas as in the proof of Proposition 2.16 in \cite{F19}. Given a subset $W$ of a topological space $X$, we denote by $\overline{W}^X$ the closure of $W$ in $X$.
\begin{proposition}\label{prop:cbu_characterizations}
    Let $\psi\in \LoneG$ be a cyclic vector for $\LoneG$ and let $\{\psi_\alpha\}_{\alpha\in\N}$ be an approximate identity for $\LoneG$. Then we have the following characterizations of $\cbuopL$:
    \begin{align*}
        \cbuopL&= \overline{\LoneG \Last \bdd}^{\cB} \\
        &= \{S\in \bdd \mid \psi_\alpha \Last S \to S \text{ in norm}\}\\
        &=\overline{\psi\Last \bdd}^{\cB}\\
        &=\LoneG \Last \cbuopL.
    \end{align*}
\end{proposition}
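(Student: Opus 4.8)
The plan is to introduce shorthand for the four candidate descriptions and prove a cycle of inclusions among them, using throughout that $\bdd$ and $\cbuopL$ are left Banach modules over the convolution algebra $(\LoneG,\ast_\ell)$ under the action $\psi\Last S$: this action is bounded, $\|\psi\Last S\|\le\|\psi\|_1\|S\|$, and associative, $\psi_1\Last(\psi_2\Last S)=(\psi_1\ast_\ell\psi_2)\Last S$, by Lemma~\ref{lem:assoc_Lone}. Write $A=\cbuopL$, $B=\overline{\LoneG\Last\bdd}^{\cB}$, $C=\{S\in\bdd:\psi_\alpha\Last S\to S\}$, $D=\overline{\psi\Last\bdd}^{\cB}$, and $E=\LoneG\Last\cbuopL$. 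First I would record the easy inclusions: by the preceding lemma $\LoneG\Last\bdd\subseteq\cbuopL$, and since $\cbuopL$ is norm-closed this yields $B\subseteq A$; since $\psi\Last\bdd\subseteq\LoneG\Last\bdd$ it also gives $D\subseteq B$, while $\LoneG\Last\cbuopL\subseteq\LoneG\Last\bdd\subseteq\cbuopL$ gives $E\subseteq A$.

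Next I would establish $B=C$ using the Cohen-Hewitt theorem, which shows $\LoneG\Last\bdd$ is already closed, so $B=\LoneG\Last\bdd$. The inclusion $C\subseteq B$ is immediate, since each $\psi_\alpha\Last S$ lies in $B$ and $B$ is closed. For $B\subseteq C$, write $S\in B$ as $S=\phi\Last T$ with $\phi\in\LoneG$, $T\in\bdd$; then $\psi_\alpha\Last S=(\psi_\alpha\ast_\ell\phi)\Last T$ by associativity, and since $\{\psi_\alpha\}$ is a left approximate identity $\psi_\alpha\ast_\ell\phi\to\phi$ in $L^1$, so $\psi_\alpha\Last S\to\phi\Last T=S$ by the norm bound. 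Thus $B=C$.

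The crux is the reverse inclusion $A\subseteq B$, which I would prove by a direct approximate-identity estimate. Fix $S\in\cbuopL$ and choose a concentrating approximate identity $\{u_\beta\}$ of nonnegative functions with $\int_\G u_\beta(g)\haar{g}=1$ supported on a shrinking neighborhood basis of $\id$. Then $u_\beta\Last S-S=\int_\G u_\beta(g)(\tpi(g)S-S)\haar{g}$, and splitting this over a neighborhood $U$ of $\id$ and its complement, the norm-continuity of $g\mapsto\tpi(g)S$ at $\id$ controls the contribution from $U$ while the uniform bound $\|\tpi(g)S-S\|\le 2\|S\|$ together with $\int_{\G\setminus U}u_\beta(g)\haar{g}\to 0$ controls the remainder; hence $u_\beta\Last S\to S$ in norm. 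As each $u_\beta\Last S\in\LoneG\Last\bdd$, this gives $S\in B$, so $A=B=C$. For $B\subseteq D$ I would use cyclicity of $\psi$ in the form $\overline{\psi\ast_\ell\LoneG}=\LoneG$: given $\phi\Last S\in\LoneG\Last\bdd$, pick $\eta_k$ with $\psi\ast_\ell\eta_k\to\phi$, so that $(\psi\ast_\ell\eta_k)\Last S=\psi\Last(\eta_k\Last S)\in\psi\Last\bdd$ converges to $\phi\Last S$; thus $B=D$. Finally, $A\subseteq E$ follows from Cohen-Hewitt once more: $\LoneG\Last\cbuopL$ is closed, and since $A=C$ every $S\in\cbuopL$ satisfies $\psi_\alpha\Last S\to S$ with $\psi_\alpha\Last S\in\LoneG\Last\cbuopL$, so $S\in E$.

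I expect the main obstacle to be the estimate $A\subseteq B$, together with the bookkeeping it forces. A generic bounded approximate identity need not concentrate at $\id$, so the clean estimate above is carried out for a convenient concentrating family $\{u_\beta\}$, and the statement for the given $\{\psi_\alpha\}$ is recovered from the already-established identity $B=C$. The second delicate point is fixing the handedness of the cyclicity hypothesis: associativity sends $(\psi\ast_\ell\eta)\Last S$ to $\psi\Last(\eta\Last S)\in\psi\Last\bdd$, but would send $(\eta\ast_\ell\psi)\Last S$ only into $\LoneG\Last(\psi\Last\bdd)$, so one must read cyclicity as $\psi$ generating $\LoneG$ as a closed right ideal under $\ast_\ell$ with $\psi$ on the left.
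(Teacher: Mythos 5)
The paper itself omits the proof of this proposition, saying only that it follows the ideas of Proposition 2.16 in \cite{F19}; your argument is exactly that standard argument --- the concentrating-approximate-identity norm estimate for $\cbuopL\subseteq\overline{\LoneG\Last\bdd}^{\cB}$, Cohen--Hewitt factorization to get closedness of $\LoneG\Last\bdd$ and of $\LoneG\Last\cbuopL$, and cyclicity of $\psi$ for the characterization $\overline{\psi\Last\bdd}^{\cB}$ --- and it is correct. Your closing caveat about handedness is also a genuine point rather than a defect: since $G=\SU(n,1)$ is non-abelian, the hypothesis must be read as $\overline{\psi\ast_\ell\LoneG}^{L^1}=\LoneG$ (equivalently, density of the span of right translates of $\psi$), which is precisely what your step $(\psi\ast_\ell\eta_k)\Last S=\psi\Last(\eta_k\Last S)$ needs; this subtlety is invisible in Fulsche's abelian Fock-space setting and is left implicit in the paper's statement.
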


%===========================
\subsection{Right translations and convolutions of functions}

We define the right translation of a function $f$ on $\B^n$ by $g\in \G$, denoted $\actfR{g}{f}$, 
is  the function on $\G$ given by
\[%\label{eq:defrg}
    (\actfR{g}{f})(h)=f(hg\cdot 0), \ \ \forall h\in \G.
\]
Unlike the left translation, the right translation $\actfR{g}{f}$ is generally not a right-$K$-invariant function on $G$ and hence
does not define a function on $\B^n$. 
We define the right translation of $f$ by $z\in \B^n$ as follows:
\[%\label{eq:defrz}
    \actftR{z}{f}:=\actfR{\tau_{z}}{f},
\]
where $\tau_z$ is as before the involution interchanging $0$ and $z$.
Since each $g\in \G$ can be written as $g=\tau_{g\cdot 0}k_g$ for some $k_g\in K$, we have that  
\[
    \actftR{g\cdot 0}{f}=\actfR{g}{f}.
\] 

We define right convolution analogously to left convolution.  One shows that if $\psi$ and $a$ are functions on $\B^n$ (i.e., right-$K$-invariant functions on $G$) such that the convolution $\psi \Rast a$ is defined, 
then $\psi \Rast a = a \last (\psi^\vee)$ is the function on $\G$ given by
\begin{align*}
    (\psi\Rast a)(g)&=\int_{\G} \psi(h\cdot 0)a(gh\cdot 0) \haar{h}\\
    &= \int_{\B^n} \psi(w)a(gw) \inv{w}.
\end{align*}
If, in addition, $\psi$ is a radial function (i.e., a bi-$K$-invariant function on $G$), then $\psi\Rast a$ is a function on the ball $\B^n$ (i.e., a right-$K$-invariant function on $G$), and $\psi \Rast a =a \last \psi$.

%=================================
\subsection{Right-$G$-uniformly continuous functions}
We now introduce right-$G$-uniformly continuous functions.

\begin{definition}
    \label{def:cbur:functions}
    A function $a$ on $\B^n$ is \textit{right-$\G$-uniformly continuous} if the map $G\rightarrow \bddfG$, $g\mapsto \actfR{g}{a}$, is continuous  with respect to $\|\cdot\|_\infty$. We denote the set of all bounded, right-$\G$-uniformly continuous functions by $\cbufR$. 
\end{definition}
It is not difficult to see that $\cbufR$ is a $C^*$-subalgebra of $\bddf$.
\begin{lemma}\label{lem:function_convol} 
     Let $\psi$ and $\varphi$ be functions on $G$. Then 
    \begin{enumerate}
    \item If $\psi\in \LoneG$ and $\varphi\in L^\infty(\B^n)$ then $\psi\last \varphi\in \cbufL$ and 
    $\|\psi *_\ell \varphi\|_\infty \leq \|\psi\|_1\|\varphi\|_\infty.$
    \item If $\varphi \in L^\infty(\B^n)$ and $\psi\in \Lone$ then $ \varphi \last\psi\in \cbufR$ and
    $\|\varphi \last\psi\|_\infty \leq \|\varphi\|_\infty \|\psi\|_1.$
    \item If $\psi\in L^p(G)$ and $\varphi\in L^{p'}(\B^n)$ where $1<p<\infty$ and  $\frac{1}{p}+\frac{1}{q}=1$, then $\psi \last\varphi\in \cbufL\cap \cbufR$ and
    $\|\psi \last \varphi\|_\infty \leq \|\psi\|_p \|\varphi\|_q.$
\end{enumerate}
\end{lemma}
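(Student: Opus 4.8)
The plan is to reduce both the norm bounds and the continuity assertions to two standard ingredients: the invariance of Haar measure (so that Young's and Hölder's inequalities apply to convolution on $\G$), and the strong continuity of translation on $L^s(\G)$ for $1\le s<\infty$. Writing the convolution as $(\psi\last\varphi)(g)=\int_\G \psi(h)\varphi(h^{-1}g)\haar{h}$, I would first note that in each of the three cases the integrand is absolutely integrable, so $\psi\last\varphi$ is defined pointwise. For the norm estimates, in (1) and (2) one bounds the $L^\infty$-factor by its sup-norm and uses left-invariance of $\mu_\G$ on the remaining $L^1$-integral, giving $\|\psi\last\varphi\|_\infty\le\|\psi\|_1\|\varphi\|_\infty$ and $\|\varphi\last\psi\|_\infty\le\|\varphi\|_\infty\|\psi\|_1$; in (3) one applies Hölder's inequality with exponents $p,q$ inside the integral and again uses invariance of $\mu_\G$ to conclude $\|\psi\last\varphi\|_\infty\le\|\psi\|_p\|\varphi\|_q$.

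For the uniform-continuity statements the guiding principle is that a left convolution is left-$\G$-uniformly continuous whenever its \emph{left} factor can be translated inside a space on which translation is strongly continuous, and right-$\G$-uniformly continuous whenever its \emph{right} factor can be so translated. Concretely, I would use the identity $\ell_x(\psi\last\varphi)=(\ell_x\psi)\last\varphi$ already recorded in Subsection~\ref{ss:Conv}, together with its companion $r_x(\psi\last\varphi)=\psi\last(r_x\varphi)$, which is immediate from $\varphi(h^{-1}gx)=(r_x\varphi)(h^{-1}g)$ in the defining integral. The differences are then controlled by the very estimates of the previous paragraph: $\|\ell_x(\psi\last\varphi)-\psi\last\varphi\|_\infty\le\|\ell_x\psi-\psi\|_s\,\|\varphi\|_{s'}$ and $\|r_x(\psi\last\varphi)-\psi\last\varphi\|_\infty\le\|\psi\|_s\,\|r_x\varphi-\varphi\|_{s'}$, with $(s,s')=(1,\infty)$ in (1)–(2) and $(s,s')=(p,q)$ in (3). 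Since translation is strongly continuous on $L^s(\G)$ exactly when $s<\infty$, I translate the finite-exponent factor on the side under consideration: in (1) the $L^1$-factor $\psi$ on the left, yielding $\psi\last\varphi\in\cbufL$; in (2) the $L^1$-factor $\psi$ (viewed as a right-$K$-invariant element of $\LoneG$) on the right, yielding $\varphi\last\psi\in\cbufR$; and in (3), where $\psi\in L^p(\G)$ and $\varphi\in L^q(\B^n)$ with both $p,q<\infty$, I translate $\psi$ on the left and $\varphi$ on the right to obtain membership in $\cbufL\cap\cbufR$.

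The only structural (rather than purely computational) point is the asymmetry between (1) and (2): one cannot translate the $L^\infty$-factor, because translation fails to be strongly continuous on $L^\infty(\G)$. This is precisely why (1) gives only left-uniform continuity and (2) only right-uniform continuity, while (3), having no $L^\infty$-factor, gives both. I expect the main thing to get right is this bookkeeping—matching the finite-exponent factor to the side on which continuity is being asserted—since all the genuine analytic content reduces to Young's and Hölder's inequalities under Haar invariance and the standard strong continuity of translation on $L^s(\G)$ for $1\le s<\infty$.
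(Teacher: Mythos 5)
Your proof is correct and takes essentially the same route as the paper's: the paper's own proof is a one-sentence sketch invoking exactly the two ingredients you develop in detail, namely the behaviour of translations under convolution and the strong (SOT) continuity of translation on $L^p(\G)$ for $1\le p<\infty$, with the norm bounds dismissed as standard Young/H\"older estimates. Your write-up simply fills in that sketch, including the correct bookkeeping of translating the finite-exponent factor on the side where uniform continuity is claimed.
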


\begin{proof}
    The proof follows from the behaviour of translations of convolutions and by the fact that $\G$ acts continuously on $L^p(G)$ with respect to the SOT for $1\leq p <\infty$. Also, the norm inequalities given are well known and easy to verify.
\end{proof}

Recall the approximate identity  $\{\varphi_\alpha\}_{\alpha\in \N_0}$ of $\Lone$. Since each $\varphi_\alpha$ is a radial function, we have that $\varphi_\alpha \Rast f=f\last \varphi_\alpha$. 
We have the following characterization of $\cbufR$.

\begin{proposition}\label{prop:cbuRf}
    The space $\cbufR$ has the following equivalent characterizations.
    \begin{align*}
    \cbufR&=\{a\in \bddf\mid g\mapsto \actfR{g}{a} \text{ is continuous from $\G$ to $\bddfG$} \}\\
    &=\{a\in \bddf\mid z\mapsto \actftR{z}{a} \text{ is continuous from $\B^n$ to $\bddfG$} \}\\
    &= \{a\in \bddf\mid \varphi_\alpha \Rast a \stackrel{\alpha\rightarrow\infty}{\longrightarrow} a \text{ in $\bddfG$ w.r.t. $\|\cdot\|_\infty$} \}\\
    &= \{a\in \bddf\mid a\last \varphi_\alpha\stackrel{\alpha\rightarrow\infty}{\longrightarrow} a \text{ in $\bddf$ w.r.t.  $\|\cdot\|_\infty$} \}\\
    &=\overline{\bddf\last \Lone}^{L^\infty}\\
    &=\cbufR\last \Lone.
\end{align*}
\end{proposition}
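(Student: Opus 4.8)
The plan is to show that the six displayed sets coincide by running a single cycle of inclusions, separating the purely formal coincidences from the two steps that genuinely use the concrete structure of $\{\varphi_\alpha\}$. Throughout I identify $\bddf$ with the right-$K$-invariant functions in $\bddfG$, under which the two $\|\cdot\|_\infty$-norms agree, and I write $A=a\circ p$ for the lift of $a$ to $\G$.

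I begin with the formal identifications. The first displayed set is exactly the definition of $\cbufR$. For its equivalence with the second set, observe that $\actfR{g}{a}(h)=a(hg\cdot 0)$ depends on $g$ only through $g\cdot 0$, so $\actfR{gk}{a}=\actfR{g}{a}$ for $k\in K$, and the map $g\mapsto\actfR{g}{a}$ factors through $p:\G\to\B^n$ as $z\mapsto\actftR{z}{a}$ (this is the recorded relation $\actftR{g\cdot 0}{a}=\actfR{g}{a}$); since $p$ is a continuous open surjection, continuity on $\G$ is equivalent to continuity on $\B^n$. The third and fourth sets express the same condition: because $\varphi_\alpha$ is radial, $\varphi_\alpha\Rast a=a\last\varphi_\alpha$, a right-$K$-invariant function on $\G$ whose sup-norm equals that of the corresponding function on $\B^n$. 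Finally the fifth set is caught between the others: Lemma~\ref{lem:function_convol}(2) gives $\bddf\last\Lone\subseteq\cbufR$ and $\cbufR$ is norm-closed, so $\overline{\bddf\last\Lone}^{L^\infty}\subseteq\cbufR$, while any $a$ obeying the fourth condition is the limit of $a\last\varphi_\alpha\in\bddf\last\Lone$, so the fourth set lies in the fifth.

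The substantive step is to close the cycle: if $a\in\cbufR$, then $a\last\varphi_\alpha\to a$ uniformly. Writing
\[
   (\varphi_\alpha\Rast a)(g)-A(g)=\int_{\B^n}\varphi_\alpha(w)\big[(\actftR{w}{a})(g)-(\actftR{0}{a})(g)\big]\inv{w},
\]
where I have used $a(g\cdot w)=A(g\tau_w)=(\actftR{w}{a})(g)$, the normalization $\int_{\B^n}\varphi_\alpha(w)\inv{w}=1$, and the identity $\actftR{0}{a}=a$ (valid because $\tau_0\cdot 0=0$, so the involution $\tau_0$, though not the group identity, fixes the value that matters), I take $\sup_g$ to bound the left side by $\int_{\B^n}\varphi_\alpha(w)\|\actftR{w}{a}-a\|_\infty\inv{w}$. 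Given $\epsilon>0$, continuity at the origin (the second characterization) yields $\delta>0$ with $\|\actftR{w}{a}-a\|_\infty<\epsilon$ for $|w|<\delta$; splitting the integral at $|w|=\delta$ and using that the explicit family $\varphi_\alpha(w)=C_\alpha(1-|w|^2)^{\alpha+n+1}$ concentrates at the origin bounds the tail by $2\|a\|_\infty\int_{|w|\ge\delta}\varphi_\alpha(w)\inv{w}\to 0$. Hence $\limsup_\alpha\|\varphi_\alpha\Rast a-a\|_\infty\le\epsilon$, which proves the third condition (equivalently the fourth) and closes the loop among the first five sets.

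For the last set, $\cbufR\last\Lone\subseteq\cbufR$ is immediate from Lemma~\ref{lem:function_convol}(2), so it remains to show $\cbufR\subseteq\cbufR\last\Lone$. I would regard $\cbufR$ as a right Banach module over the Banach algebra $(\Lone,\last)$ via $a\cdot\psi=a\last\psi$; the convergence $a\last\varphi_\alpha\to a$ just established makes the module essential, $\cbufR=\overline{\cbufR\last\Lone}$, and $\{\varphi_\alpha\}$ is a bounded ($\|\varphi_\alpha\|_1=1$) right approximate identity. Applying the Cohen--Hewitt factorization theorem in its right-module form (equivalently, to the opposite algebra, so that the stated left-approximate-identity hypothesis becomes the right approximate identity $f\last\varphi_\alpha\to f$) shows that $\cbufR\last\Lone$ is already closed, hence equal to $\cbufR$. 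The two genuinely non-formal points are the uniform approximate-identity estimate, where the concentration of $\varphi_\alpha$ and the harmless role of $\tau_0$ must be handled with care, and the Cohen--Hewitt factorization that removes the closure in the final identity; the rest parallels Proposition~\ref{prop:cbu_characterizations}.
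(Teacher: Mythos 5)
Your proposal is correct and follows essentially the same route as the paper: a cycle of inclusions using the radiality identity $\varphi_\alpha \Rast a = a\last\varphi_\alpha$, the concentration of $\{\varphi_\alpha\}$ at the origin for the approximate-identity step, the estimate $\|a\last\psi\|_\infty\le\|a\|_\infty\|\psi\|_1$ (Lemma~\ref{lem:function_convol}) plus norm-closedness of $\cbufR$ to close the loop, and Cohen--Hewitt for the final factorization. The only differences are cosmetic: you write out explicitly the $\delta$-splitting estimate that the paper defers to the proof of Proposition~\ref{prop:convergence approx identity}, and you spell out the right-module/opposite-algebra form of Cohen--Hewitt that the paper invokes without comment.
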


\begin{proof}
    The first equality is by definition. Now assume that $a\in \cbufR$. Then the map $\G\to \bddfG$, $g\mapsto \actfR{g}{a}$, is continuous with respect to $\|\cdot\|_\infty$ on $\bddfG$. Also, the map $\B^n\to \G$, $z\mapsto \tau_z$ is continuous. Thus, the composition $z\mapsto \actftR{z}{a}$ is continuous from $\B^n\to \bddfG$ with respect to $\|\cdot\|_\infty$. This, along with the fact that $\{\varphi_\alpha\}_{\alpha\in\N_0}$ is an approximate identity for $\Lone$, implies that $\varphi_\alpha \Rast a=a\last \varphi_\alpha \stackrel{\alpha\rightarrow\infty}{\longrightarrow} a$ in $\bddfG$ with respect to $\|\cdot\|_\infty$ (for a more detailed proof of the same nature, see the proof of Proposition~\ref{prop:convergence approx identity}). This argument proves that 
    \begin{align*}
    \cbufR
    &\subseteq \{a\in \bddf\mid z\mapsto \actftR{z}{a} \text{ is continuous from $\B^n$ to $\bddfG$} \}\\
    &\subseteq \{a \in\bddf\mid \varphi_\alpha \Rast a\stackrel{\alpha\rightarrow\infty}{\longrightarrow} \text{ in $\bddfG$ w.r.t.  $\|\cdot\|_\infty$} \}\\
    &= \{a\in \bddf\mid a\last \varphi_\alpha  \stackrel{\alpha\rightarrow\infty}{\longrightarrow} a\text{ in $\bddf$ w.r.t.  $\|\cdot\|_\infty$} \}\\
    &\subseteq \overline{\bddf\last \Lone}^{L^\infty}.
    \end{align*}

    To complete the proof, we claim that that $a\last \psi$ is right uniformly continuous for all $a\in L^\infty(\B^n)$ and $\psi\in L^1(\B^n)$. In fact,
    \[
        \actfR{g}{a\last \psi}=a\last \actfR{g}{\psi}
    \]
    for all $g\in G$. It follows that 
    \[
        \|(r(g)-r(h))(a\last\psi)\|_\infty = \|a\last(r(g)-r(h))\psi\|_\infty \leq \|a\|_\infty \|(r(g)-r(h))\psi\|_1.
    \]
    for all $g,h\in G$.  The continuity of the action of $\G$ on $\Lone$, then shows that $a\last \psi\in \cbufR$.
    Finally, since $\cbufR$ is closed in the $\|\cdot\|_\infty$ norm, we obtain that
    \[
        \overline{\bddf\last \Lone}^{L^\infty}\subseteq \cbufR,
    \]
    which, in turn, proves the equality of all of the above sets. Then the last equality in the statement is an application of the Cohen-Hewitt theorem.
\end{proof}

    \begin{lemma}\label{lem:convolution_in_cbuR}
        Let $S\in \bdd$ and let $A\in \traceop$ be a radial operator. Then $S\Last A\in \cbufR$.
    \end{lemma}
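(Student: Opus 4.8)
The plan is to verify membership in $\cbufR$ straight from the definition, i.e.\ to show that the map $g\mapsto\actfR{g}{S\Last A}$ is continuous from $\G$ into $\bddfG$ in the $\|\cdot\|_\infty$-norm. Everything hinges on one identity that turns the right translate of the convolution into a convolution with a translate of $A$, after which the problem reduces to a continuity statement for the orbit $g\mapsto\tpi(g)A$ in the trace norm.

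First I would confirm that $S\Last A$ is genuinely a bounded function on $\B^n$. Radiality of $A$ gives $\tpi(gk)A=\tpi(g)\tpi(k)A=\tpi(g)A$ for all $k\in K$, whence $(S\Last A)(gk)=(S\Last A)(g)$; thus $S\Last A$ is right-$K$-invariant and descends to $\B^n$ (compare Lemma~\ref{lem:associativity_radial}(1)). Boundedness, together with the estimate $\|S\Last A\|_\infty\le\|S\|\,\|A\|_1$, is supplied by Lemma~\ref{lem:trace_class_bounded_operator_convolution_inequality}, so $S\Last A\in\bddf$.

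The core step is the identity $\actfR{g}{S\Last A}=S\Last(\tpi(g)A)$. Because $\tpi$ is a genuine action of $\G$ on $\cB(\cH)$ (the phases cancel in the conjugation $\pi(\cdot)\,\cdot\,\pi(\cdot)^{-1}$), one has $\tpi(hg)=\tpi(h)\tpi(g)$, and hence for all $g,h\in\G$,
\begin{align*}
    (\actfR{g}{S\Last A})(h)&=(S\Last A)(hg)=\Tr\big(S\,\tpi(h)(\tpi(g)A)\big)\\
    &=\big(S\Last(\tpi(g)A)\big)(h).
\end{align*}
Since $\tpi(g)A\in\traceop$ with $\|\tpi(g)A\|_1=\|A\|_1$, Lemma~\ref{lem:trace_class_bounded_operator_convolution_inequality} then gives
\begin{align*}
    \|\actfR{g}{S\Last A}-\actfR{g_0}{S\Last A}\|_\infty
        &=\|S\Last(\tpi(g)A-\tpi(g_0)A)\|_\infty\\
        &\le\|S\|\,\|\tpi(g)A-\tpi(g_0)A\|_1 ,
\end{align*}
so right-$\G$-uniform continuity of $S\Last A$ follows as soon as $g\mapsto\tpi(g)A$ is continuous into $\traceop$.

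The hard part is precisely this trace-norm continuity: for a general bounded operator the orbit map is only strongly continuous, so I would exploit that $A$ is trace-class to upgrade it. The plan is a routine $\varepsilon/3$ argument. Given $\varepsilon>0$, choose a finite-rank $A_0=\sum_{i=1}^N s_i\,u_i\otimes\overline{v_i}$ with $\|A-A_0\|_1<\varepsilon$; since conjugation by the unitary $\pi(g)$ is a trace-norm isometry, $\|\tpi(g)(A-A_0)\|_1<\varepsilon$ for every $g$, and therefore $\|\tpi(g)A-\tpi(g_0)A\|_1\le 2\varepsilon+\|\tpi(g)A_0-\tpi(g_0)A_0\|_1$. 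The finite-rank term is a sum of finitely many pieces $s_i\,(\pi(g)u_i)\otimes\overline{(\pi(g)v_i)}$, each varying continuously in trace norm by the strong continuity of $\pi$ and the identity $\|x\otimes\overline{y}\|_1=\|x\|\,\|y\|$; hence for $g$ close enough to $g_0$ this term is below $\varepsilon$, giving $3\varepsilon$ in total. This establishes the required continuity and completes the proof.
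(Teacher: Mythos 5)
Your proof is correct and follows essentially the same route as the paper: right-$K$-invariance of $S\Last A$ via Lemma~\ref{lem:associativity_radial}, the key identity $\actfR{g}{(S\Last A)}=S\Last(\tpi(g)A)$, and the norm bound from Lemma~\ref{lem:trace_class_bounded_operator_convolution_inequality}, reducing everything to trace-norm continuity of $g\mapsto\tpi(g)A$. The only difference is that the paper simply cites this continuity of the $\G$-action on $\traceop$ as known, whereas you supply the standard $\varepsilon/3$ finite-rank approximation argument for it.
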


    \begin{proof}
        Since $A$ is radial, $S\Last A$ is right-$K$-invariant by Lemma~\ref{lem:associativity_radial}; that is, it is a function on $\B^n$. The proof then follows from the fact that $r(g)(S\Last A)=S\Last \actopL{g}{A}$ for all $g\in G$ and that $\G$ acts continuously on $\traceop$.
    \end{proof}

%============================================================
 %                    Regularity
%============================================================

\section{Cyclic vectors and Wiener's Tauberian theorem}\label{sec:Wiener T}

In this section, we discuss Wiener's Tauberian theorem for $\Lone$ and include a proof of the fact that Toeplitz operators are dense in the space of all Schatten-$p$-class operators on $\cS^p(\cA^2)$, as was first proved in \cite{BC94}. 

\subsection{Cyclic vectors}
A function $\psi\in L^p(G)$ is said to be \textit{$p$-cyclic}  (also referred to as \textit{$p$-regular} in the literature) if the left translates of $\psi$ span a dense subspace of $L^p(G)$. A function $\psi\in \Lp$ is $p$-cyclic if the left translates of $\psi$ span a dense subspace of $\Lp$. Similarly, an operator $S\in \schop$ is $p$-cyclic ($p$-regular) if the left translates of $S$ span a dense subspace of $\schop$. Note that if $p<q$ and $S\in \schop$ is $p$-cyclic, then $S$ is also $q$-cyclic.

\begin{remark}
    While the right translations can also be used to define the notion of cyclic vectors for $\LpG$, we prefer the use of left translations. This is due to the fact that this definition does not extend to $\Lp$, since the right translations of $\psi\in\Lp$, viewed as right-$K$-invariant functions on $G$ are, in general, no longer right-$K$-invariant and thus cannot be identified with functions on $\B^n$.
\end{remark}

Quantum versions of Wiener's Tauberian theorem were studied in \cite{FG23, FLW24, LS18, W84} and a quantum Wiener's Tauberian theorem for $\LpG$, where $G$ is a locally compact group that is not necessarily abelian was discussed in  \cite{H23}. We now prove the following Wiener's Tauberian theorem for $L^p(G/K)=\Lp$. The second claim below is analogous to Theorem 7.2 in \cite{LS18}. We note that our proof does not differ significantly from the proofs of the other versions of Wiener's Tauberian theorem. In particular, we use the well-known injectivity of the Berezin transform and the map $\bddf\to \bdd, a\mapsto T_a$. However, we include the proof for completeness. Recall that $\pi$ is the discrete series representation of $\G$ on the unweighted Bergman space.

\begin{comment}
    We adopt the following theorem from Theorem 5.29 in \cite{H23} with minor changes. 

\begin{theorem}[Wiener's Tauberian theorem]\label{theo:WienerT for LC}
    Let $1\leq p<\infty$ and let $\Psi\in \schop$. Then the following are equivalent:
    \begin{enumerate}
        \item $\Psi$ is $p$-regular
        \item $\schop=\overline{\LoneG \Last \Psi}$
        \item $S\mapsto S\Last \Psi$ is injective from $\schopq{q}  \to L^\infty(G)$.
    \end{enumerate}
    Also the following equivalent conditions imply $p$-regularity of $\psi$.
    \begin{enumerate}
        \setcounter{enumi}{3}
        \item $a\mapsto a\Last \Psi$ is injective from $L^{p'}(G) \to \bdd$
        \item $L^p(G)=\overline{\traceop\Last \Psi}$.
    \end{enumerate}
\end{theorem}

\begin{proof}
    We have $(2)\iff (3)$ as surjectivity of the map is equivalent to the injectivity of the adjoint. We have $(4)\iff (5)$ by the same argument. \\
    To show $(4)\implies(3)$, assume $S\Last \Psi=0$. Then for all $A\in \traceop$, $(A\Last S)\Last \Psi=0$ and hence $A\Last S=0$. Then $(A\Last S)(0)=\Tr(AS)=0$ for all $A\in \traceop$. Hence $S=0$.\\
    To show $(1)\iff (3)$, note that $\Psi$ is $p$-regular iff, $\ip{S}{\actopL{g}{\Psi}}=0$ for all $g\in \G$ implies $S=0$ for any $S\in \schopq{q}$. Now since $\ip{S}{\actopL{g}{\Psi}}=(S\Last \Psi)(g)$, $p$-regularity of $\Psi$ is equivalent to the injectivity of $S\mapsto S\Last \Psi$.
\end{proof}
\end{comment}

\begin{theorem}[Wiener's Tauberian theorem for $L^p(G/K)=\Lp$]\label{theo:WienerT for GK}
    Let $1\leq p<\infty$, choose $p^\prime = p/(p-1)$  so that $\frac{1}{p}+\frac{1}{p^\prime}=1$ and let $\Psi\in \schop$ be a radial operator. Then the following are equivalent:
\begin{enumerate}
        \item $\Psi$ is $p$-cyclic 
        \item $\schop=\overline{\Lone \Last \Psi}^{\cS^p}$
        \item $S\mapsto S\Last \Psi$ is injective from $\schopq{p^\prime}  \to \bddf$.
        \item $\psi \mapsto \psi\Last \Psi$ is injective from $L^{p^\prime}(\G) \to \bdd$.
        \item $L^{p^\prime} (\B^n,\lambda) =\overline{\cS^1(\cA^2)\ast_\pi \Psi}^{L^p}$.
    \end{enumerate}
\end{theorem}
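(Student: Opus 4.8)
The plan is to prove the two internal clusters $(1)\Leftrightarrow(2)\Leftrightarrow(3)$ and $(4)\Leftrightarrow(5)$ by Banach-space duality, and then to join them through the pair of implications $(3)\Rightarrow(4)$ and $(4)\Rightarrow(3)$, which is where the commutativity of the radial convolution enters. Throughout I would work with the two dual pairings $\ip{\cdot}{\cdot}_{\Tr}$ on $\schop\times\schopq{p'}$ and $\ip{\cdot}{\cdot}_{L^1}$ on $\Lp\times\Lq$, together with Lemma~\ref{lem:duality}, i.e. $\ip{\psi\Last\Psi}{B}_{\Tr}=\ip{\psi}{B\Last\Psi}_{L^1}$. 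The equivalence $(1)\Leftrightarrow(3)$ is immediate from the definition of the operator convolution: since $(S\Last\Psi)(g)=\Tr(S\tpi(g)\Psi)=\ip{S}{\tpi(g)\Psi}_{\Tr}$, a functional $S\in\schopq{p'}$ kills every translate $\tpi(g)\Psi$ exactly when $S\Last\Psi\equiv0$, so by Hahn--Banach the translates are total in $\schop$ if and only if $S\mapsto S\Last\Psi$ is injective. For $(2)\Leftrightarrow(3)$, the map $\Lone\to\schop$, $\psi\mapsto\psi\Last\Psi$ (bounded by Theorem~\ref{thm:Intp1}) has dense range iff its annihilator in $\schopq{p'}$ vanishes, and by Lemma~\ref{lem:duality} that annihilator is exactly $\{B:\ B\Last\Psi=0\}$. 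The equivalence $(4)\Leftrightarrow(5)$ is the mirror image: the map $\traceop\to\Lp$, $A\mapsto A\Last\Psi$ (bounded by Theorem~\ref{thm:Intp3}) has dense range iff its annihilator in $\Lq$ vanishes, and interchanging trace and integral gives $\ip{A\Last\Psi}{\psi}_{L^1}=\Tr\!\big(A(\psi\Last\Psi)\big)$, so that annihilator is $\{\psi\in\Lq:\ \psi\Last\Psi=0\}$; density is thus equivalent to injectivity of $\psi\mapsto\psi\Last\Psi$ (note that, as $\Psi$ is radial, $\psi\Last\Psi$ depends only on the $K$-invariant part of $\psi$, so this injectivity is naturally posed on functions on $\B^n$, i.e. on $\Lq$).

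The whole point is then to bridge the two clusters, and the mechanism is the finite-rank operator $\Phi=\one\otimes\overline{\one}$, which realizes both the Toeplitz quantization $\psi\mapsto\psi\Last\Phi=T_\psi$ and the Berezin transform $S\mapsto S\Last\Phi=B(S)$ (see Lemma~\ref{lem:Toeplitz_convolution} and the identity $B(S)=S\Last\Phi$). Because $\Phi$ and $\Psi$ are both radial, the commutativity of the radial convolution (Lemma~\ref{lem:associativity_radial}, extended from $\traceop$ to $\schop$ by density) yields the key swap
\[
    (X\Last\Phi)\Last\Psi=(X\Last\Psi)\Last\Phi ,
\]
valid both for $X=S\in\schopq{p'}$ and for $X=\psi\in\Lq$. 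For $(4)\Rightarrow(3)$, suppose $S\Last\Psi=0$; the swap gives $B(S)\Last\Psi=(S\Last\Phi)\Last\Psi=(S\Last\Psi)\Last\Phi=0$, and since $B(S)=S\Last\Phi$ lies in $\Lq$, hypothesis $(4)$ forces $B(S)=0$, whence $S=0$ by injectivity of the Berezin transform. For $(3)\Rightarrow(4)$, suppose $\psi\Last\Psi=0$ for $\psi\in\Lq$; the same swap gives $(\psi\Last\Phi)\Last\Psi=(\psi\Last\Psi)\Last\Phi=0$, so hypothesis $(3)$ forces $T_\psi=\psi\Last\Phi=0$; convolving once more with $\Phi$ and using $\Phi\Last\Phi=\varphi$ gives $\psi\last\varphi=0$, which is the Berezin transform of the symbol $\psi$ (cf. Lemma~\ref{lem:Berezin of a function}), so injectivity of the Berezin transform again yields $\psi=0$.

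I expect the main obstacle to be precisely the justification of the swap $(X\Last\Phi)\Last\Psi=(X\Last\Psi)\Last\Phi$ at the level of the different Schatten and $L^p$ classes, since the associativity and commutativity lemmas (Lemma~\ref{lem:associativity}, Lemma~\ref{lem:associativity_radial}) are stated for trace-class operators, whereas here $\Psi$ ranges over $\schop$ and $\psi$ over $\Lq$. I would discharge this by first verifying the identity on the dense subspace of finite-rank (hence trace-class) operators, where it follows from Lemma~\ref{lem:associativity_radial} together with the commutativity of $L^1(K\backslash\G/K)$, and then passing to the limit using the continuity estimates of Theorems~\ref{thm:Intp1} and~\ref{thm:Intp3}; the remaining ingredients---nondegeneracy of the trace pairing, the Fubini interchange of trace and Haar integral, and the injectivity of the Berezin transform on both operators and symbols---are standard. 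It is worth emphasizing that it is exactly the commutativity of the radial convolution algebra, i.e. the fact that $\B^n=\SU(n,1)/\U(n)$ is a commutative space, that makes the swap hold and thereby upgrades the one-directional implications of the general (noncommutative) Wiener theorem to the full equivalence of all five conditions.
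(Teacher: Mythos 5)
Your proposal is correct and follows essentially the same route as the paper's proof: the same adjoint/duality arguments give $(2)\Leftrightarrow(3)$ and $(4)\Leftrightarrow(5)$, the same observation that $(S\Last\Psi)(g)=\ip{S}{\actopL{g}{\Psi}}$ gives $(1)\Leftrightarrow(3)$, and the same radial swap with $\Phi$ combined with injectivity of the Berezin transform and of the Toeplitz quantization bridges $(3)\Leftrightarrow(4)$. Your added care in justifying the swap beyond trace-class operators, and your explicit use of $\Phi\Last\Phi=\varphi$ to finish $(3)\Rightarrow(4)$, are minor refinements of steps the paper treats more tersely via Lemma~\ref{lem:associativity_radial}.
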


\begin{proof} 
    We start by noting that  since $\Psi$ is radial then %$(S\Last \Psi)(g)=\Tr(S\actoptL{g\cdot 0}{\Psi})$
    $S\Last\Psi$ is right-$K$-invariant by Lemma~\ref{lem:associativity_radial}; that is, $S\Last \Psi\in \bddf$. We also recall that by Lemma~\ref{lem:ConAst} we have $L^{p'} (\G)\ast_\pi \Psi = L^{p'}(\B^n)\ast_\pi \Psi$ and by Lemma~\ref{lem:associativity_radial} the function $S\ast_\pi\Psi$ is right $K$-invariant and hence defines a function on $\B^n$.
    \medskip

    \noindent
    Note that since $\Psi$ is radial, if $S\in \schopq{p'}$ then %$(S\Last \Psi)(g)=\Tr(S\actoptL{g\cdot 0}{\Psi})$
    $S\Last\Psi$ is right-$K$-invariant by Lemma~\ref{lem:associativity_radial}; that is, $S\Last \Psi\in \bddf$. By Lemma~\ref{lem:duality}, we have that the map
    \begin{align*}
        \schopq{p'} \to \bddf,\quad  S\mapsto S\Last \Psi
    \end{align*}
    is the adjoint of the map
    \begin{align*}
        \Lone  \to \schop, \quad
        a \mapsto a\Last \Psi.
    \end{align*}
    Since a bounded linear map between Banach spaces is injective if and only if its adjoint has a dense image, we have proved that $(2)\Leftrightarrow (3)$. Furthermore, one has by the same lemma that the operator 
    \begin{align*}
        L^{p'}(B^n) \rightarrow \cB(\cA^2), \quad 
              a  \mapsto a\Last \Psi
    \end{align*}
    is the adjoint of the operator
    \begin{align*}
        S^1(\cA^2)\Last \Psi \rightarrow L^p(\B^n, d\lambda), \quad
                         S   \mapsto  S \Last \Psi,
    \end{align*}
    which shows that $(4)\Leftrightarrow (5)$. 

    To prove that $(3)\Rightarrow (4)$, assume that $a\in L^{p'}(\B^n, d\lambda)$ and $a\Last \Psi=0$. Then, since $\Psi$ and $\Phi$ are radial, by Lemma~\ref{lem:associativity_radial} we know that
    \[
        (a\Last \Phi)\Last \Psi=(a\Last \Psi)\Last \Phi=0.
    \]
    Hence, by (3), $a\Last \Phi=0$. Then, Lemma~\ref{lem:Toeplitz_convolution} and the injectivity of the operator
    \begin{align*}
        L^\infty(\B^n) \rightarrow \cB(\B^n), \quad
        a \mapsto T_a = a\Last\Phi,
    \end{align*}
    we have that $a=0$.

    To show that $(4)\Rightarrow(3)$, assume that $S\in \cS^{p'}(\cA^2)$ and that $S\Last \Psi=0$. Then 
    \[
        (S\Last \Psi)\Last \Phi=(S\Last \Phi)\Last \Psi=0.
    \]
    Hence $S\Last \Phi=0$ by (4). Therefore, $S=0$ by the injectivity of the Berezin transform. 

    To show that $(1)\Leftrightarrow (3)$, note that $\Psi$ is $p$-cyclic if and only if one has that for all $S\in \cS^{p'}(\cA^2)$, $S=0$ whenever $\ip{S}{\actopL{g}{\Psi}}=0$ for all $g\in \G$. Since $\ip{S}{\actopL{g}{\Psi}}=(S\Last \Psi)(g)$, the $p$-regularity of $\Psi$ is equivalent to the injectivity of $S\mapsto S\Last \Psi$.
%    The second claim is proved using similar ideas.
%    The statements $(2)\Leftrightarrow (3)$ and $(4)\Leftrightarrow (5)$ are proved as before.
 %   The equivalences $(3)\Leftrightarrow (4)$ and $(1)\Leftrightarrow (3)$ follow from the first part, because 
 %$S\Last \Psi\in \Lq\cap \bddf$ for all $S\in \schopq{q}$ and $\schopq{q}\subseteq \bdd$.
\end{proof}
The following is proved in the same way:
\begin{theorem} 
    Let the notation be as above, let $1\leq p < \infty$, and assume that $\Psi\in \traceop$ is a radial operator. Then the following are equivalent:
    \begin{enumerate}
        \item $\Psi$ is $p$-cyclic
        \item $\schop=\overline{\Lp \Last \Psi}^{\cS^p}$
        \item $S\mapsto S\Last \Psi$ is injective from $\schopq{p'}  \to \Lq$.
        \item $a\mapsto a\Last \Psi$ is injective from $L^{p'}(G) \to \schopq{p'}$
        \item $\Lp=\overline{\schop\Last \Psi}^{L^p}$.
    \end{enumerate}
\end{theorem}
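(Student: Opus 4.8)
The plan is to follow the proof of Theorem~\ref{theo:WienerT for GK} almost verbatim, using the key observation that a radial $\Psi\in\traceop$ is in particular a radial operator in $\schop$ (since $\traceop\subseteq\schop$), so that the conclusions of Theorem~\ref{theo:WienerT for GK} apply to this same $\Psi$. The only difference is that, because we are now convolving against a trace-class operator, the interpolation estimates of Theorems~\ref{thm:Intp1} and \ref{thm:Intp3} place each convolution in a sharper space: for $\Psi\in\traceop$ one has $a\ast_\pi\Psi\in\schop$ when $a\in\Lp$ and $a\ast_\pi\Psi\in\schopq{p'}$ when $a\in\Lq$, while $S\ast_\pi\Psi\in\Lq$ when $S\in\schopq{p'}$ and $S\ast_\pi\Psi\in\Lp$ when $S\in\schop$. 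Moreover, since $\Psi$ is radial, Lemma~\ref{lem:associativity_radial} guarantees that each $S\ast_\pi\Psi$ is right-$K$-invariant and therefore defines a function on $\B^n$, so these target spaces make sense.

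I would first obtain $(1)\Leftrightarrow(3)$ and $(3)\Leftrightarrow(4)$ for free from Theorem~\ref{theo:WienerT for GK}. Indeed, the maps named in (3) and (4) of the present statement are, as maps, identical to those in items (3) and (4) of Theorem~\ref{theo:WienerT for GK}: they have the same domains and the same defining formulas $S\mapsto S\ast_\pi\Psi$ and $a\mapsto a\ast_\pi\Psi$; only the advertised codomains shrink (from $\bddf$ to $\Lq$, and from $\bdd$ to $\schopq{p'}$), which is legitimate because for $\Psi\in\traceop$ one has $S\ast_\pi\Psi\in\Lq\cap\bddf$ and $a\ast_\pi\Psi\in\schopq{p'}\subseteq\bdd$. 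Injectivity depends only on the kernel of a map, not on the ambient codomain, so these are literally the same injectivity assertions as in the earlier theorem. Since the $p$-cyclicity condition in (1) is also unchanged, the chain $(1)\Leftrightarrow(3)\Leftrightarrow(4)$ transfers directly.

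It then remains to prove $(2)\Leftrightarrow(3)$ and $(4)\Leftrightarrow(5)$, which I would carry out by the duality argument of Theorem~\ref{theo:WienerT for GK}, now with the refined spaces and the Banach-space dualities $(\schop)^\ast=\schopq{p'}$ and $(\Lp)^\ast=\Lq$. For $(2)\Leftrightarrow(3)$, regard $a\mapsto a\ast_\pi\Psi$ as a bounded operator $T:\Lp\to\schop$ (bounded by Theorem~\ref{thm:Intp1}); Lemma~\ref{lem:duality} identifies its adjoint with the bounded map $S\mapsto S\ast_\pi\Psi$ from $\schopq{p'}$ to $\Lq$, and a bounded operator has dense range if and only if its adjoint is injective, giving the equivalence. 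For $(4)\Leftrightarrow(5)$, regard $S\mapsto S\ast_\pi\Psi$ as a bounded operator $U:\schop\to\Lp$ (bounded by Theorem~\ref{thm:Intp3}); Lemma~\ref{lem:duality} identifies its adjoint with $a\mapsto a\ast_\pi\Psi$ from $\Lq$ to $\schopq{p'}$, so the density in (5) is equivalent to the injectivity in (4). The only subtlety is that (4) is phrased over $L^{p'}(\G)$ rather than $\Lq=L^{p'}(\B^n)$; since $\Psi$ is radial, Lemma~\ref{lem:ConAst} gives $a\ast_\pi\Psi=a^\sharp\ast_\pi\Psi$, so the map factors through radialization and injectivity is genuinely a statement about radial functions, which is exactly what the adjoint computation over $\Lq$ provides.

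I expect the main obstacle to be only bookkeeping rather than any new idea: one must check that every convolution really lands in the claimed sharper space so that the pairings in Lemma~\ref{lem:duality} are admissible, and one must match the trace pairing $\langle\cdot,\cdot\rangle_{\Tr}$ with the integration pairing $\langle\cdot,\cdot\rangle_{L^1}$ correctly when identifying the adjoints. Once the codomains are pinned down via Theorems~\ref{thm:Intp1} and \ref{thm:Intp3}, everything reduces to the two duality equivalences together with the transfer from Theorem~\ref{theo:WienerT for GK}.
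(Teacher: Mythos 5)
Your proposal is correct and follows essentially the same route as the paper, whose own proof of this theorem is precisely that it is ``proved in the same way'' as Theorem~\ref{theo:WienerT for GK}: the equivalences $(2)\Leftrightarrow(3)$ and $(4)\Leftrightarrow(5)$ by the adjoint/dense-range duality of Lemma~\ref{lem:duality} with the sharper codomains supplied by Theorems~\ref{thm:Intp1} and~\ref{thm:Intp3}, and $(1)\Leftrightarrow(3)\Leftrightarrow(4)$ transferred from the earlier theorem because the maps are literally the same with shrunken codomains, using $S\Last\Psi\in\Lq\cap\bddf$ for $S\in\schopq{p'}$ and $\schopq{p'}\subseteq\bdd$. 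Your handling of the $L^{p'}(\G)$ versus $\Lq$ discrepancy in (4) via Lemma~\ref{lem:ConAst} also matches how the paper itself reads that condition (its proof of the corresponding step takes $a\in L^{p'}(\B^n,d\lambda)$).
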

 
\begin{corollary}
    The rank-one operator $\Phi$ is $p$-cyclic for all $1\leq p <\infty$.
\end{corollary}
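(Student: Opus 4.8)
The plan is to verify one of the equivalent conditions of Theorem~\ref{theo:WienerT for GK} for the choice $\Psi = \Phi$. First I would record the two hypotheses the theorem requires: the operator $\Phi = \one \otimes \overline{\one}$ is rank-one, hence lies in $\schopq{p}$ for every $1 \le p < \infty$, and it is radial by Lemma~\ref{lem:R_translation}. Thus the theorem applies with $\Psi = \Phi$, and it suffices to establish any one of its conditions; the most economical is condition (3).

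Condition (3) asserts that the map $S \mapsto S \Last \Phi$ is injective from $\schopq{p'}$ to $\bddf$. Here I would invoke the identity $B(S) = S \Last \Phi$ proven earlier, which identifies this convolution map with the restriction to $\schopq{p'}$ of the Berezin transform. Since the Berezin transform is injective on all of $\bdd$---the fact already used inside the proof of Theorem~\ref{theo:WienerT for GK}---its restriction to the subspace $\schopq{p'} \subseteq \bdd$ is a fortiori injective, so condition (3) holds.

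By the equivalence $(1) \Leftrightarrow (3)$ of Theorem~\ref{theo:WienerT for GK}, the operator $\Phi$ is $p$-cyclic, and since $p \in [1,\infty)$ was arbitrary the claim follows for all such $p$. There is essentially no obstacle here: the corollary is an immediate specialization of the Tauberian theorem once one recognizes the convolution $S \Last \Phi$ as the Berezin transform, whose injectivity is the only external input. (Alternatively one could use condition (4) together with Lemma~\ref{lem:Toeplitz_convolution} and the injectivity of $a \mapsto T_a$, but that route would require first extending the identity $T_a = a \Last \Phi$ from bounded symbols to $L^{p'}$ symbols, so passing through (3) and the Berezin transform is cleaner.)
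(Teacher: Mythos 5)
Your proof is correct and takes essentially the same route as the paper: both arguments feed the injectivity of the Berezin transform $B(S)=S\Last\Phi$ into the equivalence $(1)\Leftrightarrow(3)$ of Theorem~\ref{theo:WienerT for GK}. The only difference is bookkeeping: the paper verifies $1$-cyclicity and then cites the fact that $p$-cyclicity implies $q$-cyclicity for $q\ge p$, while you apply the theorem separately at each $p$ by restricting the Berezin transform to $\schopq{p'}$.
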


\begin{proof}
    By the injectivity of the Berezin transform, it follows that $\Phi$ is a $1$-cyclic operator. It is therefore $p$-cyclic for all $1\leq p < \infty$.
\end{proof}

\begin{corollary}\label{coro:regular radial function}
    Let $\Psi\in \traceop$ be a $1$-cyclic radial operator. Then
    \[
        \Lp=\overline{\Lp \Last (\Psi \Last \Psi)}
    \]
\end{corollary}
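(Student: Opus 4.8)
The plan is to read the result off from the second Wiener Tauberian theorem stated just above (the one for $\Psi\in\traceop$), combined with the associativity of the convolutions. Since $\Psi$ is $1$-cyclic it is $p$-cyclic for every $1\le p<\infty$, so that theorem applies at the exponent $p$; in particular the equivalences $(1)\Leftrightarrow(2)\Leftrightarrow(5)$ furnish both density statements
\[
    \overline{\Lp \Last \Psi}^{\cS^p}=\schop
    \qquad\text{and}\qquad
    \overline{\schop \Last \Psi}^{L^p}=\Lp .
\]
Moreover, since $\Psi\in\traceop$ is radial, Lemma~\ref{lem:operator_convolutions} and Lemma~\ref{lem:associativity_radial} show that $\Psi\Last\Psi$ is a radial function in $\Lone$, so left convolution with it maps $\Lp$ into $\Lp$ and the right-hand side of the corollary is meaningful.

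Next I would introduce the bounded linear map
\[
    \Lambda:\schop\to \Lp,\qquad \Lambda(S)=S\Last\Psi ,
\]
which is well defined (its values are right-$K$-invariant functions because $\Psi$ is radial) and bounded by Theorem~\ref{thm:Intp3}. The second displayed density says exactly that $\Lambda$ has dense range, $\overline{\Lambda(\schop)}^{L^p}=\Lp$. For any bounded linear map of Banach spaces and any dense subset $D$ of its domain one has $\overline{\Lambda(D)}=\overline{\Lambda(\schop)}$, since $\Lambda(\schop)=\Lambda(\overline{D})\subseteq\overline{\Lambda(D)}$ while trivially $\overline{\Lambda(D)}\subseteq\overline{\Lambda(\schop)}$. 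Applying this with $D=\Lp\Last\Psi$, which is dense in $\schop$ by the first displayed density, gives
\[
    \overline{\,\Lambda(\Lp\Last\Psi)\,}^{L^p}=\Lp .
\]

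It then remains to identify the set $\Lambda(\Lp\Last\Psi)$ with $\Lp\Last(\Psi\Last\Psi)$. For $a\in\Lp$ the operator $a\Last\Psi$ lies in $\schop$ by Theorem~\ref{thm:Intp1}, and the associativity of Lemma~\ref{lem:associativity}(1) gives, for each such $a$,
\[
    \Lambda(a\Last\Psi)=(a\Last\Psi)\Last\Psi=a\Last(\Psi\Last\Psi),
\]
the last convolution being the left convolution of $a$ with the radial function $\Psi\Last\Psi$. Combining this identification of sets with the previous display yields $\overline{\Lp\Last(\Psi\Last\Psi)}=\Lp$, as claimed.

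The main obstacle is precisely this last identification. Lemma~\ref{lem:associativity}(1) is stated for bounded symbols $a\in L^\infty(\B^n)$, whereas we need it for $a\in\Lp$, so I would first invoke it on the dense subspace $\Lp\cap L^\infty(\B^n)$ and then extend by continuity: both maps $a\mapsto(a\Last\Psi)\Last\Psi$ and $a\mapsto a\Last(\Psi\Last\Psi)$ are bounded from $\Lp$ to $\Lp$ (by Theorems~\ref{thm:Intp1} and~\ref{thm:Intp3}, and by Young's inequality, respectively) and they agree on that dense subspace, hence on all of $\Lp$. Once the associativity is secured, the rest is a formal consequence of the two density statements supplied by the Wiener Tauberian theorem.
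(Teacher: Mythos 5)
Your proof is correct and is essentially the paper's own argument: both rest on the same two density statements from the Wiener Tauberian theorem for trace-class radial operators ($\schop=\overline{\Lp\Last\Psi}^{\cS^p}$ and $\Lp=\overline{\schop\Last\Psi}^{L^p}$), on the associativity $(a\Last\Psi)\Last\Psi=a\Last(\Psi\Last\Psi)$ from Lemma~\ref{lem:associativity}, and on the norm estimate of Theorem~\ref{thm:Intp3}; the paper merely unwinds your abstract ``a bounded map carries a dense subset onto a set dense in the closure of its range'' step into an explicit $\epsilon/2$ triangle-inequality argument. The only difference is cosmetic: the paper applies the associativity lemma directly to $b\in\Lp$ under its ``all integrals exist'' convention, whereas you patch the $L^\infty$ hypothesis by agreement on the dense subspace $\Lp\cap L^\infty(\B^n)$ plus boundedness of both maps, which is a slightly more careful rendering of the same point.
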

\begin{proof}
    Note that since $\Psi$ is radial, $\psi:=\Psi \Last \Psi$ is a radial function in $\Lone$. If $h\in \Lp$, then $h\Last (\Psi \Last \Psi)\in \Lp$. To prove the density, let $h\in \Lp$ and let $\epsilon>0$. Then by the second claim in Theorem~\ref{theo:WienerT for GK} there is $S\in \schop$ such that
    \[
        \|h-S\Last \Psi\|_p\leq \frac{\epsilon}{2}.
    \]
    Again by Theorem~\ref{theo:WienerT for GK}, there is $b\in \Lp$ such that $\|S-(b\Last\Psi)\|_p<\frac{(d_\pi)^{1/p}}{2\|\Psi\|_1}\epsilon$.
    Hence 
    \begin{align*}
    \|h- b\Last (\Psi\Last\Psi)\|_p &\leq \|h- S\Last\Psi\|_p+ \|(S-b\Last\Psi)\Last \Psi\|_p\\
                                    &\leq \frac{\epsilon}{2}+ \frac{1}{(d_\pi)^{1/p}}\|S-b\Last\Psi\|_p \| \Psi\|_1\\
        &< \epsilon,
    \end{align*}
    where we have used that $b\Last(\Psi\Last\Psi) = (b\Last\Psi) \Last\Psi$ by Lemma~\ref{lem:associativity}, as well as the norm estimate in~\ref{eq:op_conv_estimate_interpolation}.
\end{proof}

\subsection{Applications to Toeplitz operators}
It is already known that Toeplitz operators with $\Lp$ symbols is dense in the Schatten-$p$ class \cite{C94b}. We note that this an instance of the Wiener's Tauberian theorem as the operator $\Phi$ is $p$-cyclic. 

\begin{proposition}\label{prop:schatten_density}
    Let $1\leq p<\infty$. Then
    \begin{enumerate}
        \item $\schop=\overline{\Lone\Last \Phi}^{\cS^p}= \overline{\{T_\psi \mid \psi\in \Lone\}}^{\cS^p}$
        \item $\schop=\overline{\Lp\Last \Phi}^{\cS^p}= \overline{\{T_\psi \mid \psi\in \Lp\}}^{\cS^p}$
        \item $\Lp=\overline{\traceop\Last \Phi}^{L^p}= \overline{\{B(S) \mid S\in \traceop\}}^{L^p}$
        \item $\Lp=\overline{\schop\Last \Phi}^{L^p}= \overline{\{B(S) \mid S\in \schop\}}^{L^p}$
        \item $\Lp=\overline{\Lp\Last \varphi}^{L^p}$
    \end{enumerate}
\end{proposition}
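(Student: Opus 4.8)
The plan is to deduce statement (5) directly from Corollary~\ref{coro:regular radial function} by specializing the radial $1$-cyclic operator there to the rank-one operator $\Phi = \one \otimes \overline{\one}$. First I would collect the two facts already established about $\Phi$: it is radial by Lemma~\ref{lem:R_translation}, and it is $1$-cyclic (as observed just above, via the injectivity of the Berezin transform). Since $\Phi\in\traceop$, it therefore meets the hypotheses of Corollary~\ref{coro:regular radial function}. Combining this with the earlier computation $\Phi \Last \Phi = \varphi$, which identifies the radial function $\Psi \Last \Psi$ in the corollary with $\varphi = \varphi_0$, the corollary applied with $\Psi = \Phi$ gives
\[
    \Lp = \overline{\Lp \Last (\Phi \Last \Phi)}^{L^p} = \overline{\Lp \Last \varphi}^{L^p},
\]
which is precisely statement (5).

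If one prefers to argue directly from the earlier parts of this proposition rather than invoking the corollary, I would proceed as follows. By the associativity in Lemma~\ref{lem:associativity} one has $b \Last \varphi = b \Last (\Phi \Last \Phi) = (b \Last \Phi) \Last \Phi$ for all $b \in \Lp$, so that $\Lp \Last \varphi = (\Lp \Last \Phi) \Last \Phi$. Part (2) says $\Lp \Last \Phi$ is dense in $\schop$, while the map $T : \schop \to \Lp$, $S \mapsto S \Last \Phi$, is bounded by the interpolation estimate~\eqref{eq:op_conv_estimate_interpolation}. Continuity of $T$ then yields $\overline{T(\Lp \Last \Phi)}^{L^p} \supseteq T\big(\overline{\Lp \Last \Phi}^{\cS^p}\big) = T(\schop) = \schop \Last \Phi$, whose closure in $L^p$ is all of $\Lp$ by part (4). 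Since $T(\Lp \Last \Phi) = \Lp \Last \varphi \subseteq \Lp$, this forces $\overline{\Lp \Last \varphi}^{L^p} = \Lp$.

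The main obstacle here, such as it is, is purely bookkeeping rather than any new estimate: one must take care that the symbol $\Last$ is read consistently, so that the function-function convolution $\Lp \Last \varphi$ is genuinely the iterated operator convolution $(\Lp \Last \Phi) \Last \Phi$, and that the standing hypotheses (radiality and $1$-cyclicity of $\Phi$, which in particular place $\Phi \Last \Phi = \varphi$ in $\Lone$) guarantee that every convolution involved is defined and lands in the claimed space. Once this is checked, the result is immediate from either route.
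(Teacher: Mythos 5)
Your treatment of item (5) is correct, and your first route is exactly the paper's argument: apply Corollary~\ref{coro:regular radial function} with $\Psi=\Phi$, which is legitimate because $\Phi$ is trace class, radial (Lemma~\ref{lem:R_translation}), and $1$-cyclic, and then use $\Phi\Last\Phi=\varphi$. Your alternative route for (5) is also sound as a piece of logic, but note that inside a proof of this proposition it is circular: it takes items (2) and (4) as inputs, and you never establish them.

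That is the genuine gap: the proposition has five statements and your proposal proves only the last one. Nothing in it establishes the $\cS^p$-density of $\{T_\psi\mid\psi\in\Lone\}$ or $\{T_\psi\mid\psi\in\Lp\}$ in $\schop$, nor the $L^p$-density of $\{B(S)\mid S\in\traceop\}$ or $\{B(S)\mid S\in\schop\}$ in $\Lp$, and these do not follow from (5). The paper gets all four at once: since $\Phi$ is a radial trace-class operator that is $p$-cyclic for every $1\leq p<\infty$ (the corollary preceding Corollary~\ref{coro:regular radial function}, proved via injectivity of the Berezin transform), Wiener's Tauberian theorem (Theorem~\ref{theo:WienerT for GK}, for items (1) and (3)) and its $\Lp$-companion stated immediately after it (for items (2) and (4)) yield $\schop=\overline{\Lone\Last\Phi}^{\cS^p}$, $\schop=\overline{\Lp\Last\Phi}^{\cS^p}$, $\Lp=\overline{\traceop\Last\Phi}^{L^p}$, and $\Lp=\overline{\schop\Last\Phi}^{L^p}$; the identifications $T_\psi=\psi\Last\Phi$ (Lemma~\ref{lem:Toeplitz_convolution}) and $B(S)=S\Last\Phi$ then convert these into the stated density claims. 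To complete your proof you need to add precisely this step --- cite the $p$-cyclicity of $\Phi$ and feed it into the two Tauberian theorems --- before turning to (5); with that addition, your argument for (5) coincides with the paper's and the whole proposition is proved.
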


\begin{proof}
    Recall that $T_\psi = \psi \Last\Phi$ for all $\psi\in L^p(\B_n)$, $1\leq p\leq\infty$, and also that $B(S) = S\Last\Phi$ for all $S\in \cB(\Berg)$. The statements (1) through (4) then follow immediately from Theorem~\ref{theo:WienerT for GK} and statement (5) follows from Corollary \ref{coro:regular radial function} and the fact that $\Phi\Last \Phi=\varphi$.
\end{proof}

\begin{proposition}\label{lem:schatten_cohen-Hewitt}
     Let $1\leq p < \infty$. We have the following factorization of $\schop$.
     \[
         \schop=\LoneG \Last \schop.
     \]
 \end{proposition}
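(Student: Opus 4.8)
The plan is to realize $\schop$ as an essential left Banach module over the convolution algebra $\LoneG$ and apply the Cohen--Hewitt factorization theorem, exactly as in the proof of Proposition~\ref{prop:cbu_characterizations}. First I would record that $\Last$ turns $\schop$ into a left $\LoneG$-module: Lemma~\ref{lem:assoc_Lone} gives the associativity $\psi_1 \Last (\psi_2 \Last S) = (\psi_1 \last \psi_2) \Last S$ for all $\psi_1,\psi_2 \in \LoneG$ and $S \in \schop$, while Theorem~\ref{thm:Intp1}(2) gives the contractive bound $\|\psi \Last S\|_p \le \|\psi\|_1 \|S\|_p$. Hence $\psi \mapsto (S \mapsto \psi \Last S)$ is a continuous representation of $\LoneG$ on the Banach space $\schop$, and since $\LoneG$ carries a bounded approximate identity, the Cohen--Hewitt theorem applies and shows that $\LoneG \Last \schop$ is a closed subspace of $\schop$.

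It then remains to show that this closed subspace is dense, hence all of $\schop$, and this is where the hypothesis $p<\infty$ enters. I would verify that the conjugation action $g \mapsto \tpi(g)S = \pi(g)S\pi(g)^{-1}$ is isometric and strongly continuous on $\schop$ with respect to the $\cS^p$-norm. Isometry is immediate because each $\pi(g)$ is unitary. For strong continuity, on a rank-one operator $S = \tens{f}{h}$ one has $\tpi(g)S = \tens{\pi(g)f}{\pi(g)h}$, whose $\cS^p$-norm equals $\|\pi(g)f\|\,\|\pi(g)h\|$; the strong continuity of the unitary representation $\pi$ on $\Berg$ then makes $g \mapsto \tpi(g)S$ norm-continuous on finite-rank operators, and since finite-rank operators are dense in $\cS^p$ precisely when $p<\infty$, the isometry of $\tpi(g)$ extends continuity to all of $\schop$ by a routine $\varepsilon/3$ argument.

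With strong continuity in hand, a standard approximate-identity argument finishes the proof. Taking the usual bounded approximate identity $\{\psi_\alpha\}$ of $\LoneG$ consisting of nonnegative bump functions concentrating at the identity of $\G$, strong continuity yields $\psi_\alpha \Last S \to S$ in $\cS^p$ for every $S \in \schop$, so that $\schop \subseteq \overline{\LoneG \Last \schop}$. Combined with the closedness from the first step, this gives $\schop = \overline{\LoneG \Last \schop} = \LoneG \Last \schop$, as claimed. The hard part will be the $\cS^p$-norm strong continuity of the conjugation action, which is the only place that genuinely uses $p<\infty$: it fails for $p=\infty$, where one has merely continuity in the strong operator topology, and accordingly $\overline{\LoneG \Last \bdd} = \cbuopL$ is a proper subalgebra of $\bdd$ by Proposition~\ref{prop:cbu_characterizations}. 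Every remaining ingredient is the module-theoretic bookkeeping already employed there.
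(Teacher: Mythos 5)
Your proof is correct, and it splits the same way the paper's does (closedness via Cohen--Hewitt, plus a density argument), but your density half goes by a genuinely different route. The closedness step is identical: the module axioms (associativity from Lemma~\ref{lem:assoc_Lone}, the contractive bound from Theorem~\ref{thm:Intp1}) plus the Cohen--Hewitt theorem show that $\LoneG \Last \schop$ is closed. For density, the paper nests $\Lone \Last \Phi \subseteq \LoneG \Last \schop$ and quotes Proposition~\ref{prop:schatten_density}, i.e.\ Wiener's Tauberian theorem (Theorem~\ref{theo:WienerT for GK}) together with the $p$-cyclicity of the rank-one operator $\Phi$, which ultimately rests on the injectivity of the Berezin transform; this shows that even the Toeplitz operators $T_\psi$ with $\psi \in \Lone$ are already $\cS^p$-dense. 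You instead prove density directly: $\tpi(g)$ is a $\cS^p$-isometry, $g \mapsto \tpi(g)S$ is $\cS^p$-norm continuous (checked on rank-one operators and extended by the $\varepsilon/3$ argument using density of finite-rank operators, which is exactly where $p<\infty$ enters), and then a bounded approximate identity of $\LoneG$ concentrating at the identity gives $\psi_\alpha \Last S \to S$ in $\cS^p$. Your route is more elementary and self-contained --- it bypasses the Tauberian/duality machinery and the Berezin transform entirely, works for any strongly continuous unitary representation, and cleanly isolates why the statement fails at $p=\infty$ --- while the paper's route buys sharper information, namely that the much smaller set of Toeplitz operators with integrable symbols is already dense, which is the operator-theoretic content the paper cares about. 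Note also that the continuity fact you need is implicitly available in the paper (the proof of Proposition~\ref{prop:convergence approx identity} uses Schatten-$p$-norm continuity of translation, and the needed norm inequality for the $\cS^p$-valued weak integral is covered by the paper's discussion of weak integrals), so your argument sits comfortably within the paper's toolkit.
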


 \begin{proof}
     Note that 
     \begin{align*}
         \schop & \supset \LoneG \Last \schop\\
         &\supset \LoneG \Last \Phi\\
         &\supset \Lone \Last \Phi.
     \end{align*}
     Also $\Lone \Last \Phi$ is dense in $\schop$ by Proposition~\ref{prop:schatten_density} and the fact that $\Phi$ is $p$-cyclic. Thus
     $\schop=\overline{\LoneG \Last \schop}$. Now we take $X=\schop$, $A=\LoneG$ and define the representation $T:\Lone \rightarrow\cB(\schop)$ by $T(\psi)S=\psi \Last S$ for all $S\in \schop$ and $\psi \in \Lone$. By applying the Cohen-Hewitt Theorem, we get that $\LoneG \Last \schop$ is closed in the Schatten-$p$ norm, proving the lemma.
 \end{proof}

\begin{comment}
    \begin{proposition}
    Let $a\in \bddf$. Then $T_{B_\alpha(a)}\to T_a$ in norm.
\end{proposition}

\begin{proof}
    Note that 
    \[
        T_{B_\alpha(a)}=(a\Last \varphi_\alpha) \Last \Phi=a\Last (\varphi_\alpha\Last \Phi).
    \]
    Also 
    \begin{align*}
        \|T_{B_\alpha(a)}-T_a\|&=\|a\Last (\varphi_\alpha\Last \Phi-\Phi)\|\\
        &\leq \frac{1}{d_\pi}\|a\|_\infty\|\varphi_\alpha\Last \Phi-\Phi\|_1.
    \end{align*}
    Hence the result follows as $\varphi_\alpha\Last \Phi\to \Phi$ in trace-norm.
\end{proof}

\end{comment}

%============================================================
 %                    Berezin transform
%============================================================

\section{The \texorpdfstring{$\alpha$}{alpha}-Berezin transform of operators}\label{sec:Berezin transform}

In this section, we establish our main results. 
It is known by a theorem of Xia \cite{X15} that Toeplitz operators are dense in the full Toeplitz algebra. Furthermore, Suarez conjectured in \cite{S04} that $T_{B_\alpha(S)}\stackrel{\alpha\rightarrow\infty}{\longrightarrow} S$ in operator norm for every operator $S$ in $\toepalg$, where $B_\alpha(S)$ is a higher-order Berezin transform he introduced for $\cA^2(\mathbb{D})$ and was later generalized to $\Berg$ in \cite{BHV14}. Here we explore the $\alpha$-Berezin transform of operators using QHA notions and discuss this question. We also note that many nice properties of the $\alpha$-Berezin transform that were proved for $n=1$ hold more generally.

We approximate radial left-$\G$-uniformly continuous operators $S \in \cbuopLrad$ by Toeplitz operators of the form $T_{\Ber_\alpha(S)}$. As a result, we identify the radial Toeplitz algebra $\toepalgrad$ with the algebra $\cbuopLrad$ of all left-$\G$-uniformly continuous operators. Also, we prove that the norm convergence $T_{\Ber_\alpha(S)}\stackrel{\alpha\rightarrow\infty}{\longrightarrow}S$ holds for a larger subset of the Toeplitz algebra that includes all Toeplitz operators. In addition, we discuss a class of Schatten-class operators, for which the convergence $T_{B_\alpha(S)}\stackrel{\alpha\rightarrow\infty}{\longrightarrow}S$ holds in the Schatten-$p$ norm.

\subsection{The operator $\Phi_\alpha$.}
\label{sec:definition_of_operator_Phi_alpha}
Recall the functions given by 
\[
    \varphi_\alpha(z)=C_\alpha(1-|z|^2)^{n+1+\alpha}, \, z\in \B^n.
\] 
First, we define an operator $\Phi_\alpha$ such that $\Phi_\alpha\Last \Phi=\varphi_\alpha$. These operators play an important role in the analysis that follows. In particular, we define our new Berezin transform of operators using them.

For all $\alpha\in \N_0$, we define the finite-rank operator $\Phi_\alpha$ on $\Berg$ by
\[
    \Phi_\alpha=C_\alpha\sum_{k=0}^\alpha (-1)^k{\alpha \choose k}\sum_{|\vm|=k} \frac{k!}{\vm!} P_{\vm},
\]
where, for each multi-index $\vm\in\N_0^n$, we define the rank-one operator $P_{\vm}=p_\vm\otimes \overline{p_\vm}$ on $\Berg$, where in turn $p_\vm(z)=z^\vm$ for all $z\in\B^n$.

In fact, this operator can be defined for all $\alpha>-1$, but the finite sum must be replaced by an infinite sum.  If $\alpha>-1$ and $\alpha\notin\N_0$, then we define
\begin{equation}
    \label{eq:Phi_alpha_definition_infinite_series}
    \Phi_\alpha=C_\alpha \sum_{k=0}^\infty (-1)^k{\alpha \choose k}\sum_{|\vm|=k} \frac{k!}{\vm!} P_{\vm},
\end{equation}
where we recall the standard definition of non-integer Binomial coefficients:
\[
    {\alpha \choose k} := \frac{\alpha (\alpha -1) \cdots (\alpha - k+1)}{k!}.
\]
Note that if $\alpha\in\N_0$, then ${\alpha\choose k} = 0$ whenever $k > \alpha$.  We can thus use the formula (\ref{eq:Phi_alpha_definition_infinite_series}) to define $\Phi_\alpha$ for all $\alpha>-1$. 

Note that for non-integer values of $\alpha>-1$,
\[
    {\alpha \choose k} = \frac{\Gamma(\alpha + 1)}{\Gamma(\alpha - k+1)\Gamma(k+1)} = \frac{\alpha+1}{B(\alpha -k+1, k+1)},
\]
where we recall that the Beta function satisfies $B(\alpha,\beta) := \frac{\Gamma(\alpha + \beta)}{\Gamma(\alpha)\Gamma(\beta)}$.  This can also be used when $\alpha\in\N_0$, as long as $k\leq \alpha$; if $\alpha\in\N_0$ and $k>\alpha$, then we must set ${\alpha \choose k} = 0$.

Due to the fact that the rank-one operators in $\{P_\vm\}_{m\in\N_0^n}$ are scalar multiples of commuting orthogonal projections, and also to the fact that the norms of the terms of the series that defines it converge to zero, it is evident that $\Phi_\alpha$ is a compact operator on $\Berg$ and that the series converges in the operator norm.  In fact, it turns out that $\Phi_\alpha$ is a trace-class operator, as shown by the following lemma, and thus the series that defines it converges in trace norm:
\begin{lemma}\label{lem:varphi_alpha_trace_one}
    For all $\alpha>-1$, we have that $\Phi_\alpha \in \cS^1(\Berg)$, and that $\Tr \Phi_\alpha = 1$.
\end{lemma}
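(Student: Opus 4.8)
The plan is to diagonalize $\Phi_\alpha$ with respect to the orthonormal basis $\{e_\vm\}$ and then reduce both assertions to a single binomial sum that can be evaluated by a Beta integral. First I would use the relation between $p_\vm$ and $e_\vm$ from \eqref{eq:onb}, namely $p_\vm = \sqrt{n!\,\vm!/(n+|\vm|)!}\;e_\vm$, to rewrite each summand as $P_\vm = \|p_\vm\|^2\,(e_\vm\otimes\overline{e_\vm})$ with $\|p_\vm\|^2 = n!\,\vm!/(n+|\vm|)!$. Since the $e_\vm$ are orthonormal, the operators $P_\vm$ are mutually orthogonal positive rank-one operators, so $\Phi_\alpha$ is diagonal in $\{e_\vm\}$, with eigenvalue on $e_\vm$ (for $|\vm|=k$) equal to
\[
    \lambda_\vm = C_\alpha(-1)^k\binom{\alpha}{k}\frac{k!}{\vm!}\cdot\frac{n!\,\vm!}{(n+k)!} = C_\alpha(-1)^k\binom{\alpha}{k}\frac{k!\,n!}{(n+k)!}.
\]

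Next I would establish the trace-class property by checking absolute summability of the $\lambda_\vm$. Grouping by $|\vm|=k$ and using $\#\{\vm : |\vm|=k\} = \binom{n+k-1}{n-1}$, the factorials collapse and one finds
\[
    \sum_\vm |\lambda_\vm| = C_\alpha\, n \sum_{k=0}^\infty \left|\binom{\alpha}{k}\right|\frac{1}{n+k}.
\]
The key estimate is the asymptotic $|\binom{\alpha}{k}| = O(k^{-\alpha-1})$ as $k\to\infty$, which comes from $\binom{\alpha}{k} = (-1)^k\Gamma(k-\alpha)/(\Gamma(-\alpha)\Gamma(k+1))$ together with $\Gamma(k-\alpha)/\Gamma(k+1)\sim k^{-\alpha-1}$. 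Hence the general term is $O(k^{-\alpha-2})$, and since $\alpha>-1$ gives $\alpha+2>1$, the series converges. This shows $\Phi_\alpha\in\cS^1(\Berg)$ and that its defining series converges in trace norm. (For $\alpha\in\N_0$ the sum is finite and this is automatic.)

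For the trace I would use $\Tr\Phi_\alpha = \sum_\vm\lambda_\vm$, which by the same collapsing becomes
\[
    \Tr\Phi_\alpha = C_\alpha\, n\sum_{k=0}^\infty (-1)^k\binom{\alpha}{k}\frac{1}{n+k}.
\]
I would then write $\frac{1}{n+k} = \int_0^1 t^{n+k-1}\,dt$ and interchange sum and integral; this is justified precisely by the absolute summability $\sum_k|\binom{\alpha}{k}|\frac{1}{n+k}<\infty$ proved above (Fubini--Tonelli). Since $\sum_{k}\binom{\alpha}{k}(-t)^k = (1-t)^\alpha$ on $[0,1)$, the sum equals $\int_0^1 t^{n-1}(1-t)^\alpha\,dt = B(n,\alpha+1) = (n-1)!\,\Gamma(\alpha+1)/\Gamma(n+\alpha+1)$. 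Substituting $C_\alpha = \Gamma(n+\alpha+1)/(n!\,\Gamma(\alpha+1))$ then yields $\Tr\Phi_\alpha = 1$.

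I expect the main obstacle to be the asymptotic bound on the binomial coefficients, since it simultaneously drives the trace-class conclusion, the trace-norm convergence of the defining series, and the Fubini justification for the Beta integral; the remaining manipulations are routine bookkeeping with the factorial identities. I would also flag that the hypothesis $\alpha>-1$ enters in exactly two places: it makes $\sum_k k^{-\alpha-2}$ converge and it makes the Beta integral $\int_0^1 t^{n-1}(1-t)^\alpha\,dt$ finite.
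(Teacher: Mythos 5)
Your proof is correct, but it takes a genuinely different route from the paper's. You diagonalize $\Phi_\alpha$ in the orthonormal basis $\{e_\vm\}$, count the multi-indices with $|\vm|=k$ to collapse everything to the scalar series $C_\alpha n\sum_k \binom{\alpha}{k}(-1)^k\frac{1}{n+k}$, prove absolute convergence from the Gamma-function asymptotics $|\binom{\alpha}{k}| = O(k^{-\alpha-1})$, and evaluate the trace by a one-dimensional Beta integral. The paper instead works with $\ip{p_\vm}{p_\vm}$ as integrals over $\B^n$, collapses the inner sum via the multinomial identity $\sum_{|\vm|=k}\frac{k!}{\vm!}|z^\vm|^2 = |z|^{2k}$ \emph{inside} the ball integral, and obtains $\Tr\Phi_\alpha = C_\alpha\int_{\B^n}(1-|z|^2)^\alpha\,dv(z) = 1$ directly from the normalization of $\mu_\alpha$; for the trace-class property it avoids asymptotics entirely by observing that for $k>\alpha$ the terms $(-1)^k\binom{\alpha}{k}$ all have the same sign, so the tail of the absolute series coincides (up to one global sign) with the tail of the convergent signed series, and monotone convergence closes the argument. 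Your approach buys explicit quantitative information: the eigenvalue formula $\lambda_\vm = C_\alpha(-1)^k\binom{\alpha}{k}\frac{k!\,n!}{(n+k)!}$ and the decay rate $O(k^{-\alpha-2})$, which is exactly the computation the paper ends up needing anyway in the subsequent lemma when it evaluates $\|\Phi_\alpha\|_1$; it also has the logically cleaner order (trace-class first, then the trace), whereas the paper computes a ``tentative'' trace and justifies it afterwards. The paper's approach buys brevity and softness: no Stirling-type estimates, and the answer $1$ falls out of the definition of $C_\alpha$ as a normalizing constant rather than from a separate Beta-integral evaluation. Your closing observation that $\alpha>-1$ enters exactly twice (summability of $k^{-\alpha-2}$ and finiteness of $\int_0^1 t^{n-1}(1-t)^\alpha\,dt$) is accurate and worth keeping.
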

\begin{proof}
    We begin by noting that when $k$ surpasses $\alpha$, all further values of $(-1)^k {\alpha\choose k}$ have the same sign.  In particular, for all $k\in\N$, $k > \alpha$, we have that:
    \[
        (-1)^k {\alpha \choose k} = \left\{  
                \begin{matrix}
                    |{\alpha \choose k}|, \text{ if } \lfloor \alpha \rfloor \text{ is even}\\
                    -|{\alpha\choose k}|, \text{ if } \lfloor \alpha \rfloor \text{ is odd}\\
                \end{matrix} \right.
    \]
    
    We must first show that $\Phi_\alpha$ is a trace-class operator.  This can be done by showing that $\Tr |\Phi_\alpha| < \infty$.  

    We will do this in a slightly roundabout way, by calculating the value that $\Tr \Phi_\alpha$ must have if it is trace class:
    \begin{align*}
        \Tr \Phi_\alpha&=  C_\alpha\sum_{k=0}^\infty (-1)^k{\alpha \choose k}\sum_{|\vm|=k} \frac{k!}{\vm!} \ip{p_\vm}{p_\vm} \pagebreak[0] \\
        &= C_\alpha\sum_{k=0}^\infty (-1)^k{\alpha \choose k}\sum_{|\vm|=k} \frac{k!}{\vm!} \int_{\B^n} |z^\vm|^2 \ dz \pagebreak[0]\\
        &= C_\alpha\sum_{k=0}^\infty (-1)^k{\alpha \choose k} \int_{\B^n} |z^2|^k \ dz \pagebreak[0] \\
        &=C_\alpha  \int_{\B^n} (1-|z|^2)^{\alpha} \ dz \pagebreak[0] \\
        &= 1.
    \end{align*}
    If $\alpha\in\N_0$, then we have in fact finished the proof, because $\Phi_\alpha$ is a finite-rank operator and is therefore trace class.  Otherwise, we note that the inner sum is always finite.  We also note that in the tail, when $k>\alpha$, the terms of the sum are either all positive or all negative, so that the monotone convergence theorem can be used to justify the movement of the infinite sum into the integral in the next-to-last step.

    We now observe that 
    \[
        |\Phi_\alpha| = C_\alpha \sum_{k=0}^\infty \left(\left|{\alpha \choose k}\right| \sum_{|\vm|=k} \frac{k!}{\vm!} P_{\vm}\right).
    \]
    Thus we have that 
    \[
        \Tr |\Phi_\alpha| 
            = C_\alpha \sum_{k=0}^\infty \left|(-1)^k {\alpha \choose k} \int_{\B_n} |z^2|^k\, dz \right|.
    \]
    However, this sum clearly converges, because its tail (for $k>\alpha$) is equal (up to a global sign change) to the tail of the above series where we calculated the tentative value of $\Tr \Phi_\alpha$.  That is,
    \[
            C_\alpha \sum_{k=\lfloor \alpha\rfloor + 1}^\infty \left|(-1)^k {\alpha \choose k} \int_{\B_n} |z^2|^k\, dz \right| 
            = C_\alpha \left|\sum_{k=\lfloor \alpha\rfloor + 1}^\infty (-1)^k {\alpha\choose k} \int_{\B_n} |z^2|^k\, dz \right|,
    \]
    because all of the terms in the sum have the same sign, and thus the series for $\Tr \Phi_\alpha$ differs from the series for $\Tr |\Phi_\alpha|$ by at most a $-1$ change of finitely many terms and a multiplication by $-1$, depending on the parity of $\lfloor \alpha \rfloor$. 
\end{proof} 

\begin{lemma}\label{lem:varphi_alpha}
    For all $\alpha>-1$, 
    \begin{enumerate}
        \item For all $\alpha>-1$, $\Phi_\alpha\Last \Phi=\Phi\Last \Phi_\alpha=\varphi_\alpha$
        \item If $\alpha\in\N_0$, then $\|\Phi_\alpha\|_1=n C_\alpha \int_0^1 (1+t)^\alpha(1-t)^{n-1} dt$.
    \end{enumerate}
\end{lemma}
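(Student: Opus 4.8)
The plan is to deduce (1) by identifying $\Phi_\alpha\Last\Phi$ with the Berezin transform of $\Phi_\alpha$, and to prove (2) by diagonalizing $\Phi_\alpha$ and converting the resulting eigenvalue sum into an integral. For (1), recall that $S\Last\Phi=B(S)$ for every $S\in\bdd$, so $\Phi_\alpha\Last\Phi=B(\Phi_\alpha)$ and it suffices to compute the Berezin transform of $\Phi_\alpha$. Using the reproducing property in the form $\ip{p_\vm}{k_z}=z^\vm(1-|z|^2)^{(n+1)/2}$, each rank-one summand contributes
\[
    \ip{(p_\vm\otimes\overline{p_\vm})k_z}{k_z}=|\ip{p_\vm}{k_z}|^2=|z^\vm|^2(1-|z|^2)^{n+1}.
\]
Summing over a fixed degree $|\vm|=k$ and invoking the multinomial identity $\sum_{|\vm|=k}\frac{k!}{\vm!}|z^\vm|^2=|z|^{2k}$ collapses the inner sum, and the remaining series $\sum_{k}(-1)^k\binom{\alpha}{k}|z|^{2k}$ is the binomial expansion of $(1-|z|^2)^\alpha$, convergent since $|z|<1$. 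This gives $B(\Phi_\alpha)(z)=C_\alpha(1-|z|^2)^{n+1+\alpha}=\varphi_\alpha(z)$. The companion identity $\Phi\Last\Phi_\alpha=\varphi_\alpha$ then follows from \eqref{eq:operator_convolution_commutation} together with $\varphi_\alpha^\vee=\varphi_\alpha$ (which holds by Lemma~\ref{lem:radialf_invar}, as $\varphi_\alpha$ is radial), or directly from the commutativity of convolution of radial operators in Lemma~\ref{lem:associativity_radial}(2).

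For (2), the operators $\{P_\vm\}$ are mutually orthogonal, and $P_\vm=\|p_\vm\|^2\,(e_\vm\otimes\overline{e_\vm})$ is $\|p_\vm\|^2=\frac{n!\,\vm!}{(n+|\vm|)!}$ times the rank-one orthogonal projection onto $\C e_\vm$, by \eqref{eq:onb}. Hence $\Phi_\alpha$ is diagonal in the orthonormal basis $\{e_\vm\}$, acting on every $e_\vm$ with $|\vm|=k$ by the same scalar
\[
    \lambda_k=C_\alpha(-1)^k\binom{\alpha}{k}\frac{k!\,n!}{(n+k)!},
\]
so that $\lambda_k$ occurs with multiplicity $\binom{n+k-1}{n-1}$, the number of multi-indices $\vm$ with $|\vm|=k$. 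For $\alpha\in\N_0$ the spectrum is finite and $\binom{\alpha}{k}\ge 0$ for $0\le k\le\alpha$, whence
\begin{align*}
    \|\Phi_\alpha\|_1 &=\sum_{k=0}^\alpha\binom{n+k-1}{n-1}\,|\lambda_k|\\
        &=C_\alpha\sum_{k=0}^\alpha\binom{\alpha}{k}\frac{n!\,(n+k-1)!}{(n-1)!\,(n+k)!}\\
        &=nC_\alpha\sum_{k=0}^\alpha\frac{\binom{\alpha}{k}}{n+k},
\end{align*}
where one uses $\binom{n+k-1}{n-1}\frac{k!\,n!}{(n+k)!}=\frac{n}{n+k}$.

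Finally, to reach the claimed closed form I would write $\frac{1}{n+k}=\int_0^1 t^{n+k-1}\,dt$ and interchange the finite sum with the integral, obtaining
\[
    \|\Phi_\alpha\|_1=nC_\alpha\int_0^1 t^{n-1}\sum_{k=0}^\alpha\binom{\alpha}{k}t^k\,dt=nC_\alpha\int_0^1(1+t)^\alpha\,t^{n-1}\,dt
\]
by the binomial theorem. I expect the one genuinely delicate point to be the bookkeeping of the eigenvalue multiplicities $\binom{n+k-1}{n-1}$: it is exactly this factor that combines with $\frac{k!\,n!}{(n+k)!}$ to produce the weight $\frac{n}{n+k}$, and hence the power $t^{n-1}$ in the integrand. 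For $n=1$ the multiplicity equals $1$, the integrand is simply $(1+t)^\alpha$, and one recovers the classical disk computation; note that a Beta-function substitution $\frac{k!\,n!}{(n+k)!}=n\int_0^1 t^k(1-t)^{n-1}\,dt$ that silently drops this multiplicity would instead produce the integrand $(1+t)^\alpha(1-t)^{n-1}$, so tracking the multiplicity carefully is the crux of the argument and the step most prone to error.
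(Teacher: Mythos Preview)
Your argument for (1) is correct and matches the paper's proof essentially line for line: both identify $\Phi_\alpha\Last\Phi$ with the Berezin transform $B(\Phi_\alpha)$, compute $B(P_\vm)(z)=(1-|z|^2)^{n+1}|z^\vm|^2$, collapse the inner sum via the multinomial identity, and sum the binomial series. The paper deduces $\Phi\Last\Phi_\alpha=\Phi_\alpha\Last\Phi$ from the radiality of $\Phi_\alpha$ just as you do.

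For (2) you have in fact caught an error in the paper. The paper's proof writes the eigenvalue of $\Phi_\alpha$ on $e_\vm$ as
\[
    \lambda_k \;=\; C_\alpha(-1)^k\binom{\alpha}{k}\frac{k!\,n!}{(n+k)!}\;=\;nC_\alpha(-1)^k\binom{\alpha}{k}\int_0^1 t^k(1-t)^{n-1}\,dt
\]
and then sums $|\lambda_k|$ over $k$ to obtain $\|\Phi_\alpha\|_1=nC_\alpha\int_0^1(1+t)^\alpha(1-t)^{n-1}\,dt$. But this is $\sum_k|\lambda_k|$, not $\sum_\vm|\lambda_{|\vm|}|$: the multiplicity $\binom{n+k-1}{n-1}$ of the eigenspace at degree $k$ has been dropped, exactly the pitfall you flag. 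Your computation, which keeps the multiplicity and uses $\binom{n+k-1}{n-1}\cdot\frac{k!\,n!}{(n+k)!}=\frac{n}{n+k}=n\int_0^1 t^{n+k-1}\,dt$, gives the correct value
\[
    \|\Phi_\alpha\|_1 \;=\; nC_\alpha\int_0^1(1+t)^\alpha\,t^{n-1}\,dt.
\]
A quick check at $n=2$, $\alpha=1$ (where $C_1=3$ and $\Phi_1=3[P_{(0,0)}-P_{(1,0)}-P_{(0,1)}]$) confirms this: the eigenvalues are $3,-1,-1$, so $\|\Phi_1\|_1=5$, which matches your integral $6\int_0^1(1+t)t\,dt=5$ but not the paper's $6\int_0^1(1+t)(1-t)\,dt=4$. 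The two formulas agree only when $n=1$. Since part (2) is not used elsewhere in the paper, the error is harmless for the subsequent results, but the stated formula and its proof are incorrect for $n\ge 2$; your version is the right one.
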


\begin{proof}
    We first observe that 
    \begin{align*}
        \Phi_\alpha * \Phi = \sum_{k=0}^\infty (-1)^k {\alpha \choose k} \sum_{|\vm|=k} \frac{k!}{\vm!} P_{\vm}*\Phi,
    \end{align*}
    where the sum converges in the $L^\infty$-norm for functions on $\B^n$.  This occurs because the series defining $\Phi_\alpha$ converges in the trace norm, which allows us to apply Lemma~\ref{lem:trace_class_bounded_operator_convolution_inequality}.

    We now note that, for all $z\in \C^n$,
    \[
        B(P_\vm)(z)=\ip{P_\vm k_z}{k_z}=|\ip{k_z}{p_\vm}|^2=(1-|z|^2)^{n+1}|z^{\vm}|^2.
    \]
    (recall that $k_z$ refers to the \textit{normalized} reproducing kernel).
    Therefore,
    \begin{align*}
        (\Phi_\alpha\Last \Phi)(z)&=B(\Phi_\alpha)(z) \displaybreak[0]\\
                                  &= C_\alpha\sum_{k=0}^\infty (-1)^k{\alpha \choose k}\sum_{|\vm|=k} \frac{k!}{\vm!}\ip{P_\vm k_z}{k_z} \displaybreak[0]\\
                                  &= C_\alpha(1-|z|^2)^{n+1} \sum_{k=0}^\infty (-1)^k{\alpha \choose k}\sum_{|\vm|=k} \frac{k!}{\vm!} |z^{\vm}|^2 \displaybreak[0] \\
                                  &=C_\alpha(1-|z|^2)^{n+1} \sum_{k=0}^\infty (-1)^k{\alpha \choose k} |z|^{2k} \displaybreak[0]\\
        &= C_\alpha(1-|z|^2)^{n+1+\alpha}.
    \end{align*}
    Since the Berezin transform $B(\Phi_\alpha)=\varphi_\alpha$ is a radial function, $\Phi_\alpha$ is a radial operator. Thus $\Phi_\alpha\Last \Phi=\Phi\Last \Phi_\alpha$.
    
    To prove (2), note that the eigenvalues of the radial operator $\Phi_\alpha$ are given for $k=|\vm|\leq \alpha$ by
    \begin{align*}
        \ip{\Phi_\alpha e_\vm}{e_\vm}&= C_\alpha (-1)^{k} {\alpha \choose k}\frac{k!}{\vm!}\frac{n!\vm!}{(n+k)!}\\
        &= C_\alpha(-1)^{k} {\alpha \choose k}\frac{k!n!}{(n+k)!}\\
        &= n C_\alpha(-1)^{k} {\alpha \choose k}\int_0^1 t^k(1-t)^{n-1} dt.
    \end{align*}
    Therefore 
    \begin{align*}
        \|\Phi_\alpha\|_1&=n C_\alpha\sum_{k=0}^\alpha {\alpha \choose k}\int_0^1 t^k(1-t)^{n-1} dt\\
        &=n C_\alpha \int_0^1 (1+t)^\alpha(1-t)^{n-1} dt. \qedhere
    \end{align*}
\end{proof}

\subsection{The $\alpha$-Berezin transform}
\label{sec:definition_of_new_alpha_Berezin_transform}
Recall that by Lemma~\ref{lem:Berezin of a function}, the $\alpha$-Berezin transform of a bounded function $a\in \bddf$ is given by $B_\alpha(a)=a\Last \varphi_\alpha.$  Also, the Berezin transform of $S\in \bdd$ is given by $B(S)=S\Last \Phi$. In addition, $\Phi_\alpha\Last \Phi=\varphi_\alpha$. Thus, we claim that the following definition of $\alpha$-Berezin transform of an operator is natural. Then in Lemma \ref{lem:Berezin_formula}, we prove that this definition coincides with the existing notion of the $\alpha$-Berezin transform discussed in \cite{BHV14, S04}.

\begin{definition}
    Let $S\in \bdd$ and $\alpha> -1$. We define the \textit{$\alpha$-Berezin transform} $\Ber_\alpha(S)$ of $S$ by
     \[
         \Ber_\alpha(S)=S\Last \Phi_\alpha.
     \]
\end{definition}
Note that $\Ber_\alpha(S)$ is a right-$K$-invariant function on $G$, since $\Psi_\alpha$ is a radial operator.  Thus, we identify $\Ber_\alpha(S)$ with a function on $\B^n$ and note that
    $\|\Ber_\alpha(S)\|_\infty\leq \|\Phi_\alpha\|_1\|S\|_\infty$.
In particular, $\Ber_\alpha(S)\in \cbufR$ by Lemma~\ref{lem:convolution_in_cbuR}. 

\begin{lemma}\label{lem:Berezin_formula}
    Let $\alpha>-1$ and let $S\in \bdd$. Then $\Ber_\alpha(S)$ is given by the following formula.  
    \[
        \Ber_\alpha(S)(z)=C_\alpha \sum_{k=0}^\infty(-1)^k {\alpha \choose k} \sum_{|\vm|=k} \frac{k!}{\vm!} \ip{S((p_\vm \circ \Act{z}  )k_z)}{(p_\vm \circ \Act{z})k_z}.
    \]
\end{lemma}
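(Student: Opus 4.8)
The plan is to expand $\Ber_\alpha(S)=S\Last\Phi_\alpha$ term by term using the series definition \eqref{eq:Phi_alpha_definition_infinite_series} of $\Phi_\alpha$, reduce each term to the convolution of $S$ with a single rank-one operator $P_\vm$, and then evaluate that convolution explicitly at a point $z=g\cdot 0\in\B^n$.

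First I would justify interchanging the infinite sum with the convolution. By Lemma~\ref{lem:varphi_alpha_trace_one} the series defining $\Phi_\alpha$ converges in the trace norm, and by Lemma~\ref{lem:trace_class_bounded_operator_convolution_inequality} the map $A\mapsto S\Last A$ is continuous from $\traceop$ into $L^\infty$, with $\|S\Last A\|_\infty\leq\|S\|_\infty\|A\|_1$. Hence
\[
    \Ber_\alpha(S)=C_\alpha\sum_{k=0}^\infty(-1)^k{\alpha\choose k}\sum_{|\vm|=k}\frac{k!}{\vm!}\,(S\Last P_{\vm}),
\]
with convergence in $L^\infty(\B^n)$; when $\alpha\in\N_0$ this is a finite sum and no justification is needed.

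Next I would evaluate $(S\Last P_{\vm})(z)$ for $z=g\cdot 0$. By definition $(S\Last P_{\vm})(z)=\Tr(S\,\tpi(z)P_{\vm})$, and since $\tpi(z)$ acts on rank-one operators by $\tpi(z)(\tens{u}{v})=\tens{\pi(z)u}{\pi(z)v}$, we obtain $\tpi(z)P_{\vm}=\tens{\pi(z)p_\vm}{\pi(z)p_\vm}$. Using that $\Tr(S(\tens{u}{v}))=\ip{Su}{v}$, each term becomes $\ip{S(\pi(z)p_\vm)}{\pi(z)p_\vm}$. It then remains to identify $\pi(z)p_\vm$. From the explicit formula \eqref{def:piz}, $\pi(z)p_\vm(w)=|j(\Act{z},0)|\,K_z(w)\,p_\vm(\Act{z}\cdot w)$; by Lemma~\ref{lem:cocycle_normalized_ker}(2) one has $|j(\Act{z},0)|=(1-|z|^2)^{(n+1)/2}$, and comparing with \eqref{eq:kz} for $\alpha=0$ gives $|j(\Act{z},0)|K_z=k_z$. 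Therefore
\[
    \pi(z)p_\vm=(p_\vm\circ\Act{z})\,k_z,
\]
and substituting this into the preceding display yields exactly the claimed formula.

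The only genuine subtlety is the interchange of the infinite sum and the convolution for non-integer $\alpha$; every other step is an unwinding of definitions. This interchange is handled cleanly by the trace-norm convergence from Lemma~\ref{lem:varphi_alpha_trace_one} together with the continuity estimate of Lemma~\ref{lem:trace_class_bounded_operator_convolution_inequality}, so nothing beyond the convergence of the defining series is needed.
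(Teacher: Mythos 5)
Your proof is correct and follows essentially the same route as the paper's: expand $\Phi_\alpha$ term by term using trace-norm convergence together with Lemma~\ref{lem:trace_class_bounded_operator_convolution_inequality}, reduce each term to $\Tr(S\,\tpi(z)P_{\vm})=\ip{S\pi(z)p_{\vm}}{\pi(z)p_{\vm}}$, and identify $\pi(z)p_{\vm}=(p_{\vm}\circ\Act{z})k_z$ via the cocycle identities. The only (minor) difference is how the evaluation of the convolution at $z=g\cdot 0$ is justified — the paper appeals to invariance of the rank-one terms, while you evaluate directly at the representative $\tau_z$ using $\tpi(z)=\tpi(\tau_z)$ — but this is the same computation.
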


\begin{proof}
    Note that, by the linearity of convolutions, together with Lemma~\ref{lem:trace_class_bounded_operator_convolution_inequality}, we have that:
    \begin{align*}
        \Ber_\alpha(S)&=(S\Last \Phi_\alpha)\\
        &=C_\alpha \sum_{k=0}^\infty(-1)^k {\alpha \choose k} \sum_{|\vm|=k} \frac{k!}{\vm!} (S\Last P_\vm),
    \end{align*}
    where the sum converges uniformly on $\B_n$.
    Now note that since $P_\vm$ is radial,
    \begin{align*}
        (S\Last P_\vm)(g)&= \Tr(S\actopL{g\cdot 0}{P_m})\\
                         &=\Tr \left(\tens{(S\pi(g\cdot 0)p_{\vm})}{\pi(g\cdot 0)p_\vm}\right)\\
        &=\ip{S\pi(g\cdot 0)p_\vm}{\pi(g\cdot 0)p_\vm}\\
        &=\ip{S(p_\vm \circ \Act{g\cdot 0}  )k_{g\cdot 0}}{(p_\vm \circ \Act{g\cdot 0})k_{g\cdot 0}}.
    \end{align*}
    Hence the formula holds.
\end{proof}

\begin{lemma}
    \label{lem:Berezin_commutativity}
    Let $a\in \bddf$, $S\in \bdd$ and let $\alpha,\beta>-1$. Then
    \begin{enumerate}
        \item $\Ber_\alpha(T_a)=B_\alpha(a)$
        \item $B_\alpha(\Ber_\beta (S))=B_\beta(\Ber_\alpha (S))$
    \end{enumerate}
    
\end{lemma}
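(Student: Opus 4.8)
The plan is to reduce both identities to the associativity of the three convolution products already established in the excerpt (function--function $\last$, function--operator $\Last$, and operator--operator $\Last$), keeping careful track of which convolution is meant at each step, and then to use the commutativity of convolution by radial operators for the symmetry in part~(2).

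For part~(1), I would chase the definitions directly:
\[
    \Ber_\alpha(T_a) = T_a \Last \Phi_\alpha = (a \Last \Phi) \Last \Phi_\alpha = a \last (\Phi \Last \Phi_\alpha) = a \last \varphi_\alpha = B_\alpha(a).
\]
Here the second equality is Lemma~\ref{lem:Toeplitz_convolution}, the third is the associativity of Lemma~\ref{lem:associativity}(1) applied with the function $a\in\bddfG$, the bounded operator $\Phi$, and the trace-class operator $\Phi_\alpha$; the fourth is Lemma~\ref{lem:varphi_alpha}(1) ($\Phi\Last\Phi_\alpha=\varphi_\alpha$); and the last is Lemma~\ref{lem:Berezin of a function}. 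Strictly speaking Lemma~\ref{lem:Berezin of a function} is stated for $\alpha>0$, but its proof via Lemma~\ref{lem:2.5}(1) is valid for all $\alpha>-1$, which is the range we need, so I would note this explicitly.

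For part~(2) the key observation is that $B_\alpha(\Ber_\beta(S))$ is the \emph{ordinary} Berezin transform of a single operator built from $\Phi_\alpha$ and $\Phi_\beta$. I would set $Y := (S \Last \Phi_\beta) \Last \Phi_\alpha$; since $S \Last \Phi_\beta \in \bddfG$ by Lemma~\ref{lem:trace_class_bounded_operator_convolution_inequality} and $\Phi_\alpha \in \traceop$, this is a well-defined bounded operator. Recalling that $B(\cdot)=(\cdot)\Last\Phi$, I would compute using Lemma~\ref{lem:associativity}(1), then Lemma~\ref{lem:varphi_alpha}(1), then Lemma~\ref{lem:Berezin of a function} with $f=\Ber_\beta(S)=S\Last\Phi_\beta$:
\[
    B(Y) = \big((S\Last\Phi_\beta)\Last\Phi_\alpha\big)\Last\Phi = (S\Last\Phi_\beta)\last(\Phi_\alpha\Last\Phi) = (S\Last\Phi_\beta)\last\varphi_\alpha = B_\alpha(\Ber_\beta(S)).
\]
Since $\Phi_\alpha$ and $\Phi_\beta$ are radial operators (Lemma~\ref{lem:varphi_alpha}), Lemma~\ref{lem:associativity_radial}(3) yields $(S\Last\Phi_\beta)\Last\Phi_\alpha = (S\Last\Phi_\alpha)\Last\Phi_\beta$, so $Y$ is symmetric in $\alpha$ and $\beta$. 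The identical computation with $\alpha$ and $\beta$ interchanged gives $B(Y)=B_\beta(\Ber_\alpha(S))$, and hence $B_\alpha(\Ber_\beta(S)) = B_\beta(\Ber_\alpha(S))$.

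The only genuine obstacle is bookkeeping: the symbol $\Last=\ast_\pi$ denotes both the function--operator convolution (producing an operator) and the operator--operator convolution (producing a function), so at each use of Lemma~\ref{lem:associativity}(1) I must check that the three inputs lie in the hypothesized classes ($\bddfG$ function, bounded operator, trace-class operator) and that the grouping matches the lemma exactly. Once the reduction $B_\alpha(\Ber_\beta(S)) = B\big((S\Last\Phi_\beta)\Last\Phi_\alpha\big)$ is in hand, the symmetry follows purely from the commutativity of convolution by radial operators, so no further analytic input (such as injectivity of the Berezin transform) is required.
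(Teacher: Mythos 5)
Your proposal is correct and follows essentially the same route as the paper: part (1) is the identical chain $T_a\Last\Phi\Last\Phi_\alpha = a\ast(\Phi\Last\Phi_\alpha)=a\ast\varphi_\alpha$, and part (2) is the paper's argument reorganized around the intermediate operator $(S\Last\Phi_\beta)\Last\Phi_\alpha$, reduced via associativity and $\varphi_\alpha=\Phi_\alpha\Last\Phi$, with the swap supplied by Lemma~\ref{lem:associativity_radial}(3). Your extra bookkeeping (well-definedness via Lemma~\ref{lem:trace_class_bounded_operator_convolution_inequality}, and the observation that Lemma~\ref{lem:Berezin of a function} extends to all $\alpha>-1$) is sound but does not change the argument.
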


\begin{proof}
    Note that
    \begin{align*}
        \Ber_\alpha(T_a) & = \Ber_\alpha( a\Last \Phi ) \\
                        & = (a\Last\Phi) \Last \Phi_\alpha \\
                        & = a \Last (\Phi \Last \Phi_\alpha) \\
                        & = a \Last \varphi_\alpha \\
                        & = B_\alpha(a)
    \end{align*}
    % \begin{align*}
    %     \Ber_\alpha(T_a)(z)&= C_\alpha \sum_{k=0}^\alpha(-1)^k {\alpha \choose k} \sum_{|\vm|=k}\int_{\B^n}a(w) |\act{z}{w}|^{2k} |k_z(w)|^2 \ dA(w)\\
    %     &= C_\alpha \int a(w) (1-|\act{z}{w}|^2)^\alpha|k_z(w)|^2 \ dA(w)\\
    %     &= \int a(w) \varphi_\alpha(\act{z}{w}) \inv{w}\\
    %     &= (a\Last \varphi_\alpha)(z)\\
    %     &= B_\alpha(a)(z).
    % \end{align*}
    for all $z\in\B^n$. To prove (2), note that 
    \begin{align*}
        B_\alpha(\Ber_\beta(S))&=(S \Last \Phi_\beta ) \Last \varphi_\alpha\\
        &= (S \Last \Phi_\beta ) \Last (\Phi_\alpha \Last \Phi)\\
        &= ((S \Last \Phi_\beta )\Last \Phi_\alpha)\Last \Phi\\
        &= ((S \Last \Phi_\alpha )\Last \Phi_\beta)\Last \Phi\\
        &=B_\beta(\Ber_\alpha(S)).
    \end{align*}
    by Lemma~\ref{lem:associativity_radial}.
\end{proof}

\subsection{A Toeplitz Criterion}

Toeplitz operators are closely related to moment sequences. For example, the eigenvalue sequence of a radial Toeplitz operator is a normalized moment sequence. The following proposition is somewhat akin to the moment sequence condition given by auxiliary sequences for the Hausdorff moment problem. This was proven for the case $n=1$ and $p=\infty$ by Suarez in \cite[Thm.\ 2.7]{S05}, and the proof, which we present below, follows a similar idea.

\begin{theorem}\label{theo:Toeplitz criterion}
    Let $1<p\leq \infty$ and let $S\in \schop$. Then the following are equivalent:
    \begin{enumerate}
        \item $S=T_a$ for some $a\in \Lp$.
        \item There exists $C>0$ such that $\|\Ber_\alpha(S)\|_p < C$ for all $\alpha\in\N_0$.
    \end{enumerate}
\end{theorem}
\begin{proof}
   To see that $(1)\implies (2)$, let $S=T_a$, where $a\in \Lp$. Then
    \[
        \|\Ber_\alpha(T_a)\|_\infty
            =\|a\Last \varphi_\alpha\|_p\leq \|a\|_\infty \|\varphi_\alpha\|_1
            =\|a\|_p.
    \]
    for all $\alpha\in \N$.

    To prove the converse, assume that there is $C>0$ such that $\|\Ber_\alpha(S)\|_p\leq C$ for all $\alpha\in \N_0$. Let $p,\in [1,\infty)$ be s.t. $\frac{1}{p}+\frac{1}{p'}=1$. Then $\Lq$ is separable and its dual is $\Lp$.  Then by the Banach-Alaoglu theorem, there exists a subsequence $\{\Ber_{\alpha_k}(S)\}_{k\in\N}$ converging to some $a\in \Lp$ in the weak-$*$ topology of $\Lp$. Then for all $f\in \Lq$,
    \[
        \lim_{k} \ip{\Ber_{\alpha_k}(S)}{f}=\ip{a}{f},
    \]
    where above $\ip{\cdot}{\cdot}$ denotes the usual $\Lp$-$\Lq$ duality pairing. Hence, for $\psi\in \Lq$, 
    \begin{align*}
        \lim_k \ip{{\Ber_{\alpha_k}(S)}\ast \varphi}{\psi}_\tr&= \lim_k \ip{(\Ber_{\alpha_k}(S)\ast \Phi)\ast \Phi}{\psi}\\
        &=\lim_k \ip{\Ber_{\alpha_k}(S)\ast \Phi}{\psi\ast \Phi}_\tr\\
        &=\lim_k \ip{\Ber_{\alpha_k}(S)}{\psi\ast \varphi}\\
        &=\ip{a}{\psi\ast \varphi}\\
        &=\ip{a\ast\varphi}{\psi}.
    \end{align*}
    Therefore, $\lim_k {\Ber_{\alpha_k}(S)}\ast \varphi=a\ast \varphi$ in the weak$^*$-topology of $\Lp$.
    Note that
    \begin{align*}
        \Ber_\alpha(S)\ast \varphi &= (S\ast \Phi_\alpha)\ast (\Phi\ast \Phi)\\
        &=(S\ast \Phi)\ast (\Phi_\alpha\ast \Phi)\\
        &= B(S)\ast \varphi_\alpha.
    \end{align*}
    Also, $B(S)\in \Lp$ for $p<\infty$ and $B(S)\in \cbufR$ by Lemma \ref{lem:convolution_in_cbuR}, and $\{\varphi_\alpha\}$ is a right approximate identity for $\Lone$.  
    Therefore, by Proposition \ref{prop:cbuRf}, and its analog for $p<\infty$, we have that 
    $$\lim_k B(S)\ast \varphi_\alpha=B(S)$$
    in $\Lp$. Now by the uniqueness of the limit,
    $$B(S)=a\ast \varphi=B(T_a).$$
    Then by the injectivity of the Berezin transform, we have that
    $$S=T_a.$$
\end{proof}

\begin{remark}
    We remark that the above Theorem \ref{theo:Toeplitz criterion} cannot be generalized to characterize Toeplitz operators with $L^1$ symbols. This is because $\Lone$ is not the dual of a Banach space. 
\end{remark}

\subsection{Approximations by Toeplitz operators.}
In this section, we consider the convergence $T_{\Ber_\alpha(S)} \stackrel{\alpha\rightarrow\infty}{\longrightarrow}S$ for $S\in \toepalg$. This question was already answered positively for radial operators in the Toeplitz algebra w.r.t. the operator norm in \cite{S05} and \cite{BHV14b}. In addition this convergence holds for Toeplitz operators \cite{S04,S07}. We explore a larger class of operators for which this convergence holds. We also consider this convergence in Schatten-classes as well as in the SOT.

\begin{lemma}\label{lem:radial Toeplitz Berezin}
    Let $S\in \bdd$  and $\alpha>-1$. Then 
    \begin{enumerate}
        \item $T_{\Ber_\alpha(S)}= B(S)\Last \Phi_\alpha$
        \item If $S$ is radial, then $T_{\Ber_\alpha(S)}=\varphi_\alpha \Last S$
    \end{enumerate}
\end{lemma}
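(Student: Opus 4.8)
The plan is to reduce both statements to the associativity and commutativity properties for convolutions of radial operators established in Lemma~\ref{lem:associativity_radial}, together with the three identifications $T_a = a\Last\Phi$ (Lemma~\ref{lem:Toeplitz_convolution}), $B(S)=S\Last\Phi$, and $\Ber_\alpha(S)=S\Last\Phi_\alpha$, and the computation $\Phi\Last\Phi_\alpha=\varphi_\alpha$ from Lemma~\ref{lem:varphi_alpha}. No genuine analytic input is needed beyond these; the argument is a matter of arranging the brackets so that the radiality hypotheses are available.

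For part (1), I would first unwind the definitions. Applying Lemma~\ref{lem:Toeplitz_convolution} with the bounded function $a=\Ber_\alpha(S)$, and then the definition of the $\alpha$-Berezin transform, gives
\[
    T_{\Ber_\alpha(S)} = \Ber_\alpha(S)\Last\Phi = (S\Last\Phi_\alpha)\Last\Phi.
\]
Both $\Phi$ (radial by Lemma~\ref{lem:R_translation}) and $\Phi_\alpha$ (radial by Lemma~\ref{lem:varphi_alpha}) are radial operators, so Lemma~\ref{lem:associativity_radial}(3) permits interchanging them:
\[
    (S\Last\Phi_\alpha)\Last\Phi = (S\Last\Phi)\Last\Phi_\alpha = B(S)\Last\Phi_\alpha,
\]
where the final equality uses the realization $B(S)=S\Last\Phi$ of the Berezin transform. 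This establishes (1).

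For part (2), I would start from the expression obtained in (1), namely $T_{\Ber_\alpha(S)}=(S\Last\Phi)\Last\Phi_\alpha$. Since $S$ is now radial by hypothesis, and $\Phi,\Phi_\alpha$ are both radial, Lemma~\ref{lem:associativity_radial}(4) applies with $A=\Phi$, $B=\Phi_\alpha$ and yields
\[
    (S\Last\Phi)\Last\Phi_\alpha = (\Phi\Last\Phi_\alpha)\Last S.
\]
Finally, Lemma~\ref{lem:varphi_alpha}(1) identifies $\Phi\Last\Phi_\alpha=\varphi_\alpha$, so that $T_{\Ber_\alpha(S)}=\varphi_\alpha\Last S$, as claimed.

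The argument is essentially pure bookkeeping, and the only point requiring care is tracking which convolution appears at each step: operator--operator convolutions (producing right-$K$-invariant functions on $\G$, identified with functions on $\B^n$) versus function--operator convolutions (producing operators on $\Berg$). Once the bracketing is arranged so that parts (3) and (4) of Lemma~\ref{lem:associativity_radial} apply — which requires verifying at each stage that the relevant factors are radial — the two conclusions follow immediately. Thus the main (and minor) obstacle is simply ensuring the radiality hypotheses are in force when the commutativity and associativity identities are invoked.
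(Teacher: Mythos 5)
Your proposal is correct and follows essentially the same route as the paper: both reduce the claim to the identifications $T_a=a\Last\Phi$, $B(S)=S\Last\Phi$, $\Ber_\alpha(S)=S\Last\Phi_\alpha$, $\Phi\Last\Phi_\alpha=\varphi_\alpha$, and then invoke the radial commutativity/associativity identities of Lemma~\ref{lem:associativity_radial} (parts (3) and (4)) using that $\Phi$ and $\Phi_\alpha$ are radial trace-class operators. If anything, your write-up is slightly more careful than the paper's, which nominally cites the general associativity lemma while actually using the radial version you name explicitly.
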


\begin{proof}
    This is an immediate consequence of Lemma~\ref{lem:associativity} because
    \[
        S\Last \varphi_\alpha=S\Last (\Phi_\alpha\Last \Phi)=(S\Last \Phi_\alpha)\Last \Phi= (S\Last \Phi) \Last \Phi_\alpha=B(S)\Last \Phi_\alpha,
    \]
    since $\Phi_\alpha$ and $\Phi$ are radial. If $S$ is radial then we get
    $(S\Last \Phi_\alpha)\Last \Phi=(\Phi_\alpha\Last \Phi)\Last S=\varphi_\alpha \Last S.$
\end{proof}

Recall the approximate identity $\{\varphi_\alpha\}_{\alpha\in\N_0}$ of $\Lone$. Here we show that a radial operator $S\in \bdd$ satisfying certain continuities, can be approximated by convolutions $\varphi_\alpha\Last S$.

\begin{proposition}\label{prop:convergence approx identity}
    We have the following convergence properties. Let $S\in \bdd$ be a radial operator. Then
    \begin{enumerate}
        \item $\varphi_\alpha\Last S\converges{\alpha} S$ in $\bdd$ with respect to the SOT.
        \item If $S\in \cbuopL$,  then $\varphi_\alpha\Last S\converges{\alpha} S$ with respect to the operator norm in $\bdd$.
        \item If $S\in \schop$, then $\varphi_\alpha\Last S\converges{\alpha} S$ with respect to the Schatten-$p$ norm in $\schop$.
    \end{enumerate}
\end{proposition}

\begin{proof}
    We begin by proving statement (1).   Let $f_1,f_2\in \Berg$. Then since $S$ is radial, one has that $\actoptL{g}{S}=\actopL{g\cdot 0}{S}$, and hence
    \begin{align*}
        \ip{(\varphi_\alpha \Last S-S)f_1}{f_2}    &= \int_\G  \varphi_\alpha(g\cdot 0)\ip{(\actopLnoouterparens{g\cdot 0}{S}-S)f_1}{f_2}  \haar{g}\\
        &= \int_{\B^n} \varphi_\alpha(w) \ip{(\actopLnoouterparens{w}{S}-S)f_1}{f_2}  \inv{w}.
    \end{align*}
    Since $\actopLnoouterparens{0}{S}=S$, it follows that
    \[
        |\ip{(\varphi_\alpha \Last S-S)f_1}{f_2}|\leq \int_{\B^n} \|(\actopLnoouterparens{w}{S}-\actopLnoouterparens{0}{S})f_1\|\|f_2\| \varphi_\alpha (w) \inv{w}.
    \]
    Note that the map $\G\rightarrow \bdd$, $g\mapsto \actopLnoouterparens{g}{S}$, is continuous with respect to the SOT and the map $z\mapsto \tau_z$ is continuous from $\B^n$ to $\G$. Therefore the map $\B^n\rightarrow \bdd$, $z\mapsto \actopLnoouterparens{z}{S}$, is continuous with respect to the SOT. In particular, this continuity holds at $z=0$.
    Therefore, the convergence $\varphi_\alpha \Last S \converges{\alpha} S$ in the SOT, holds by standard arguments associated with approximate identities.

    The proof of (2) is similar and uses the continuity of the map $\B^n \rightarrow \bdd$, $z\mapsto \actopLnoouterparens{z}{S}$ with respect to operator norm. 
    
    We now prove (3). Since $S\in \schop$, the function $\Ber_\alpha(S)=S\Last \Phi_\alpha \in \Lp$. Therefore $T_{\Ber_\alpha(S)}=\Ber_\alpha(S)\Last \Phi\in \schop$. Also, 
    $z\mapsto\actopLnoouterparens{z}{S}$ is continuous in the Schatten-$p$ norm. We claim that
    \begin{equation}\label{eq:tracenorm bound}
         \|\varphi_\alpha\Last S-S\|_p \leq \int_{\B^n} \varphi_\alpha(w) \|\actopLnoouterparens{w}{S}-S\|_p \inv{w}.
    \end{equation}
    To prove (\ref{eq:tracenorm bound}), let $A\in \bddLtwo$, and let $\{f_k\}_{k\in\N
    }$ be an orthonormal basis of $\LtwoG$. Then for all $A\in \bdd$,
    \begin{align*}
        |\ip{\varphi_\alpha \Last S-S}{A}_{\Tr}|&=|\Tr((\varphi_\alpha \Last S-S)A)|\\
        &=\Big|\sum_{k} \int_{\B^n} \varphi_\alpha(w) \ip{(\actopLnoouterparens{w}{S}-S)Af_k}{f_k}  \inv{w} \Big|\\
        &\leq \int_{\B^n} \varphi_\alpha(w) \sum_{k} |\ip{(\actopLnoouterparens{w}{S}-S)Af_k}{f_k}| \inv{w}\\
        &\leq \int_{\B^n} \varphi_\alpha(w) \|\actopLnoouterparens{w}{S}-S\|_1  \inv{w}  \|A\|,
    \end{align*}
    proving (\ref{eq:tracenorm bound}).
    
    Hence $\varphi_\alpha\Last S\converges{\alpha} S$ in Schatten-$p$ norm by a proof similar to the proof of (1).
\end{proof}

We recall that the second statement in the following theorem was already proved in \cite{S05} and \cite{BHV14b}.
\begin{theorem}\label{theo:convergence approx identity}
    We have the following convergence properties. Let $S\in \bdd$ be a radial operator. Then
    \begin{enumerate}
        \item $T_{\Ber_\alpha(S)}\converges{\alpha} S$ in the SOT.
        \item If $S\in \schop$, then $T_{\Ber_\alpha(S)}\converges{\alpha} S$ with respect to the Schatten-$p$ norm in $\schop$.
        \item If $S\in \cbuopL$,  then $T_{\Ber_\alpha(S)}\converges{\alpha} S$ with respect to the operator norm in $\bdd$.
    \end{enumerate}
\end{theorem}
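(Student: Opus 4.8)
The plan is to recognize that this theorem is an immediate corollary of the two preceding results once the radiality of $S$ is exploited. First I would invoke Lemma~\ref{lem:radial Toeplitz Berezin}(2): since $S$ is assumed radial, the Toeplitz operator built from the $\alpha$-Berezin transform can be rewritten as a convolution against the approximate identity,
\[
    T_{\Ber_\alpha(S)} = \varphi_\alpha \Last S .
\]
This single identity, valid for every $\alpha>-1$ because $\Phi_\alpha$ is a radial operator, is the only substitution needed to transfer each conclusion.

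With this in hand, each of the three statements reduces verbatim to the corresponding part of Proposition~\ref{prop:convergence approx identity}. Statement (1), convergence in the strong operator topology, is exactly Proposition~\ref{prop:convergence approx identity}(1). Statement (2), Schatten-$p$-norm convergence under the extra hypothesis $S\in\schop$, follows from Proposition~\ref{prop:convergence approx identity}(3), noting that $S$ is still radial so that $T_{\Ber_\alpha(S)}=\varphi_\alpha\Last S$ applies and the result lands in $\schop$. Statement (3), operator-norm convergence for $S\in\cbuopL$, is Proposition~\ref{prop:convergence approx identity}(2). In every case one simply replaces $T_{\Ber_\alpha(S)}$ by $\varphi_\alpha\Last S$ and quotes the relevant convergence.

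The substantive work has therefore already been carried out in Proposition~\ref{prop:convergence approx identity}, whose mechanism is that $\{\varphi_\alpha\}_{\alpha>-1}$ concentrates its mass near the origin of $\B^n$ while the orbit map $z\mapsto\tpi(z)S$ is continuous (in the SOT, the Schatten-$p$ norm, or the operator norm, respectively) at $z=0$, where it equals $S$. Consequently no genuine obstacle remains at this stage; the only point requiring a moment's care is to confirm that the convolution preserves radiality so that Lemma~\ref{lem:radial Toeplitz Berezin}(2) may indeed be applied, which holds precisely because $\Phi_\alpha$ is radial. I would thus present the proof as a short two-line deduction combining these two cited results.
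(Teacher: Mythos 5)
Your proposal is correct and is essentially identical to the paper's own proof, which likewise deduces the theorem in one line from Lemma~\ref{lem:radial Toeplitz Berezin}(2) (rewriting $T_{\Ber_\alpha(S)}=\varphi_\alpha\Last S$ for radial $S$) combined with Proposition~\ref{prop:convergence approx identity}. You have also matched the parts correctly (SOT with part (1), Schatten-$p$ with part (3), operator norm with part (2)), so nothing further is needed.
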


\begin{proof}
    The result follows from  Lemma~\ref{lem:radial Toeplitz Berezin} and Proposition~\ref{prop:convergence approx identity}.
\end{proof}

In the theorem below, we discuss a larger class of operators that can be approximated by Toeplitz operators.
    
\begin{theorem}\label{theo:radial covolution convergence}
    Let $1\leq p < \infty$.  Then we have the following:
    \begin{enumerate}
        \item If $S\in \Lp\Last \traceop$ then $T_{\Ber_\alpha(S)}\converges{\alpha} S$ in Schatten-$p$ norm.
        \item If $S\in \Lone\Last \schop$ then $T_{\Ber_\alpha(S)}\converges{\alpha} S$ in Schatten-$p$ norm.
        \item If $S\in \Lone\Last \cbuopL$ then $T_{\Ber_\alpha(S)}\converges{\alpha} S$ in operator norm.
    \item If $S\in \bddf\Last \traceop$ then $T_{\Ber_\alpha(S)}\converges{\alpha} S$ in operator norm.
    \end{enumerate}
\end{theorem}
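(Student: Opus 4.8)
The plan is to treat all four parts by a single structural reduction. In each case I write the operator as $S=\psi\Last A$, where $\psi$ is a function on $\B^n$ (i.e.\ a right-$K$-invariant function on $\G$) and $A$ lies in the relevant operator class ($\traceop$, $\schop$, or $\cbuopL$), and I reduce to the case where $A$ is \emph{radial}. The reduction is supplied by Lemma~\ref{lem:convolution_rad}: since $\psi$ is a function on $\B^n$, one has $\psi\Last A=\psi\Last\Rad{A}$, so $S=\psi\Last\Rad{A}$ as well. Because radialization is an average of unitary conjugations, it is norm-decreasing on each Schatten class and maps $\traceop$ to $\traceop$ and $\schop$ to $\schop$; I will also check that it maps $\cbuopL$ into $\cbuopL$, which follows from the uniform continuity of the orbit map $g\mapsto\tpi(g)A$ together with the compactness of $K$. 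Hence in every case we may assume from the outset that $A$ is radial.

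The key identity, valid for any factorization $S=\psi\Last A$, is
\[
    T_{\Ber_\alpha(S)}=\psi\Last T_{\Ber_\alpha(A)}.
\]
To obtain it I unwind the definitions: $\Ber_\alpha(S)=S\Last\Phi_\alpha=(\psi\Last A)\Last\Phi_\alpha=\psi\Last(A\Last\Phi_\alpha)=\psi\Last\Ber_\alpha(A)$ by the associativity of Lemma~\ref{lem:associativity} (note that $A\Last\Phi_\alpha=\Ber_\alpha(A)$ is a right-$K$-invariant function, since $\Phi_\alpha$ is radial), and then $T_{\Ber_\alpha(S)}=\Ber_\alpha(S)\Last\Phi=(\psi\Last\Ber_\alpha(A))\Last\Phi=\psi\Last(\Ber_\alpha(A)\Last\Phi)=\psi\Last T_{\Ber_\alpha(A)}$ by Lemma~\ref{lem:assoc_Lone}. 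Subtracting $S=\psi\Last A$ gives
\[
    T_{\Ber_\alpha(S)}-S=\psi\Last\bigl(T_{\Ber_\alpha(A)}-A\bigr).
\]

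Now the radial case does the analytic work. Since $A$ is radial, Theorem~\ref{theo:convergence approx identity} yields $T_{\Ber_\alpha(A)}\converges{\alpha}A$ in the trace norm when $A\in\traceop$, in the Schatten-$p$ norm when $A\in\schop$, and in the operator norm when $A\in\cbuopL$. It remains to transport this convergence through the fixed convolution $\psi\Last(\cdot)$ using the appropriate Young-type estimate, with $B:=T_{\Ber_\alpha(A)}-A$: for (1), $\psi\in\Lp$ and $B\in\traceop$, so Theorem~\ref{thm:Intp1}(1) gives $\|\psi\Last B\|_p\le d_\pi^{-1/p'}\|\psi\|_p\|B\|_1\to0$; for (2), $\psi\in\Lone$ and $B\in\schop$, so Theorem~\ref{thm:Intp1}(2) gives $\|\psi\Last B\|_p\le\|\psi\|_1\|B\|_p\to0$; for (3), $\psi\in\Lone$ and $B\in\bdd$, so Lemma~\ref{lem:assoc_Lone}(3) gives $\|\psi\Last B\|_\infty\le\|\psi\|_1\|B\|_\infty\to0$; and for (4), $a\in\bddf$ and $B\in\traceop$, so Proposition~\ref{prop:trace_convolutions} gives $\|a\Last B\|_\infty\le d_\pi^{-1}\|a\|_\infty\|B\|_1\to0$. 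In each case this is exactly the asserted convergence.

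The main obstacle is the associativity step. Lemma~\ref{lem:associativity}(1) is stated for an $L^\infty$ left factor, whereas parts (1)--(3) require $(\psi\Last A)\Last\Phi_\alpha=\psi\Last(A\Last\Phi_\alpha)$ for $\psi\in\Lp$ or $\Lone$, and Lemma~\ref{lem:assoc_Lone}(2) is stated for $L^1$ functions while part (1) has $\psi\in\Lp$; these associativity relations must therefore first be extended to the relevant $L^p$ and $\cS^p$ classes. This is routine given the Young-type bounds of Section~\ref{sec:general QHA}, since all the triple products involved are absolutely convergent and Fubini applies exactly as in the proofs of Lemmas~\ref{lem:associativity} and~\ref{lem:assoc_Lone}, but it is the one point that genuinely needs verification rather than a direct citation. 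The secondary technical point is the stability of $\cbuopL$ under radialization used in part (3).
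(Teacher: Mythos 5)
Your proof is correct and follows essentially the same route as the paper's: reduce to a radial second factor via Lemma~\ref{lem:convolution_rad}, pull $\Ber_\alpha$ through the factorization by associativity to get $T_{\Ber_\alpha(S)}-S=\psi\Last\bigl(T_{\Ber_\alpha(A)}-A\bigr)$, invoke the radial convergence result (Theorem~\ref{theo:convergence approx identity}, which the paper instead re-derives inline from Lemma~\ref{lem:radial Toeplitz Berezin} and Proposition~\ref{prop:convergence approx identity}), and finish with the Young-type norm estimates. The two points you flag as needing verification (extending the associativity lemmas beyond their stated $L^1$/$L^\infty$ hypotheses, and stability of $\cbuopL$ under radialization for part (3)) are genuine but routine, and are glossed over in the paper's own proof as well, which only treats part (4) explicitly.
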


\begin{proof}
    We prove only (4), as the proofs are similar. By Lemma~\ref{lem:convolution_rad}, we have that $S=a\Last A$, for some $a\in \bddf$ and $A\in \traceop^{K}$. Then 
    \begin{align*}
        B(a\Last A)= (a\Last A) \ast \Phi= a \Last (A\ast \Phi)= a \Last B(A).
    \end{align*}
    It follows that, by Lemma~\ref{lem:radial Toeplitz Berezin} and the associativity of convolution,
    \begin{align*}
        T_{\Ber_\alpha(S)} &= B(S)\Last \Phi_\alpha \ \ \text{(by Lemma~\ref{lem:radial Toeplitz Berezin})} \\
        &= (a \Last B(A)) \Last \Phi_\alpha \\
        &= a \Last (B(A) \Last \Phi_\alpha) \\
        &= a \Last T_{\Ber_\alpha(A)}\\
        &= a \Last (\varphi_\alpha \Last A) \ \ \text{(by Lemma~\ref{lem:radial Toeplitz Berezin})}.
    \end{align*}
    Since $A\in \traceop^{K}$, $\varphi_\alpha \Last A\converges{\alpha} A$ in trace norm. Therefore  $T_{\Ber_\alpha(S)}\converges{\alpha} a \Last A=S$ in operator norm.
\end{proof}

The convergence in the following corollary is immediate by the above proposition. This was originally proved by Suarez in \cite{S07} using analytical methods. 

\begin{corollary}
    Let $a\in \bddf$. Then $T_{B_\alpha(a)}\converges{\alpha} T_a$ in operator norm. Moreover, the Toeplitz algebra is generated by Toeplitz operators with right-uniformly continuous symbols and
    $$\toepalg=\overline{\cbufR\ast \Phi}.$$
\end{corollary}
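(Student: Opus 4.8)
The plan is to obtain the norm convergence directly from Theorem~\ref{theo:radial covolution convergence} and then bootstrap it to the algebra identity using the density of Toeplitz operators inside $\toepalg$. First I would establish $T_{B_\alpha(a)}\converges{\alpha}T_a$. By Lemma~\ref{lem:Toeplitz_convolution} we have $T_a=a\Last\Phi$, and since $\Phi=\one\otimes\overline{\one}$ is a radial rank-one (hence trace-class) operator, $T_a$ lies in $\bddf\Last\traceop$. Theorem~\ref{theo:radial covolution convergence}(4) then yields $T_{\Ber_\alpha(T_a)}\converges{\alpha}T_a$ in operator norm, and Lemma~\ref{lem:Berezin_commutativity}(1) identifies $\Ber_\alpha(T_a)=B_\alpha(a)$, which gives the first claim.

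For the algebra identity I would first record that $\cbufR\ast\Phi=\{T_a\mid a\in\cbufR\}$, again by Lemma~\ref{lem:Toeplitz_convolution} (interpreting $\ast$ as the operator convolution $\Last$, so that $a\Last\Phi=T_a$). Since each such $T_a$ lies in the norm-closed $C^*$-algebra $\toepalg$, the inclusion $\overline{\cbufR\ast\Phi}\subseteq\toepalg$ is immediate. The key point for the reverse inclusion is that, for every $a\in\bddf$, the $\alpha$-Berezin symbol $B_\alpha(a)=a\last\varphi_\alpha$ actually belongs to $\cbufR$: this is exactly Lemma~\ref{lem:function_convol}(2) applied to the bounded function $a$ and the $L^1$-function $\varphi_\alpha$. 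Consequently $T_{B_\alpha(a)}\in\cbufR\ast\Phi$ for every $\alpha$, and the convergence from the first part shows $T_a\in\overline{\cbufR\ast\Phi}$ for every $a\in\bddf$.

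It then remains to pass from individual Toeplitz operators to the whole algebra, and here I would invoke Xia's theorem \cite{X15}, quoted at the start of this section, that the Toeplitz operators $\{T_a\mid a\in\bddf\}$ are norm-dense in $\toepalg$. Combining this density with the inclusion $\{T_a\mid a\in\bddf\}\subseteq\overline{\cbufR\ast\Phi}$ just obtained, together with the fact that $\overline{\cbufR\ast\Phi}$ is norm-closed, gives $\toepalg\subseteq\overline{\cbufR\ast\Phi}$ and hence equality. I expect this final step to be the main obstacle conceptually: the set $\cbufR\ast\Phi$ is merely a linear subspace of $\bdd$ and is not visibly closed under products or adjoints, so one cannot read off the identity from the $C^*$-algebraic structure alone. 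It is precisely Xia's density result that upgrades the statement ``every Toeplitz operator is a norm-limit of Toeplitz operators with $\cbufR$-symbols'' to the asserted identity $\toepalg=\overline{\cbufR\ast\Phi}$; the weaker assertion that $\toepalg$ is merely \emph{generated} by such operators would follow without Xia's theorem, directly from $T_{B_\alpha(a)}\to T_a$.
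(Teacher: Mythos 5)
Your proposal is correct and takes essentially the same route as the paper: the norm convergence is obtained from Theorem~\ref{theo:radial covolution convergence}(4) applied to $T_a=a\Last\Phi\in\bddf\Last\traceop$ (with $\Ber_\alpha(T_a)=B_\alpha(a)$), and the identity $\toepalg=\overline{\cbufR\ast\Phi}$ follows from $B_\alpha(a)\in\cbufR$ combined with Xia's density theorem \cite{X15}. You merely make explicit the details (Lemma~\ref{lem:Berezin_commutativity}(1), Lemma~\ref{lem:function_convol}(2), closedness of $\overline{\cbufR\ast\Phi}$) that the paper's one-line proof leaves implicit.
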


\begin{proof}
    Since $B_\alpha(a)\in \cbufR$ and that Toeplitz operators are dense in the Toeplitz algebra \cite{X15}, the statement is true.
\end{proof}

\begin{corollary}\label{coro:radial convolutions in cbu}
    Let $S\in \bddf^{K}\Last \traceop$. Then $S\in \cbuopL$. In particular, radial Toeplitz operators are left-uniformly continuous.
\end{corollary}

\begin{proof}
    By Theorem~\ref{theo:radial covolution convergence}, $T_{\Ber_\alpha(S)}\converges{\alpha} S$ in operator norm. Thus it is enough to show that $T_{\Ber_\alpha(S)}\in \cbuopL$ for all $\alpha\in\N$.
    Suppose that $S= a\Last A$, where $a\in \bddf^K$ and $A\in \traceop$. Then $S=a\ast \Rad{A}$ by Lemma~\ref{lem:convolution_rad}. Note that
    \[
        \Ber_\alpha(S)=S\Last \Phi_\alpha=a\Last (\Rad{A}\Last \Phi_\alpha)=(\Rad{A}\Last \Phi_\alpha)\ast a
    \]
    by the commutativity of the convolution of radial (that is, bi-$K$-invariant) functions) in a commutative space. Note that $\Rad{A}\Last \Phi_\alpha$ is a radial function in $\Lone$ by Proposition~\ref{prop:cbu_characterizations}. Thus $\Ber_\alpha(S)\in \cbufL$ and hence $\Ber_\alpha(S)\ast \Phi=T_{\Ber_\alpha(S)}\in \cbuopL$.
\end{proof}

%===================
\subsection{A description of the radial Toeplitz algebra}

Now we characterize the radial Toeplitz algebra $\toepalgrad$ as follows.

\begin{theorem}\label{theo: main theorem}
    We can characterize the left-$\G$-uniformly continuous operators on $\Berg$ as follows
    \begin{align*}
        \cbuopL^{K} &= \overline{\bddf^{K}\Last \traceop}^{\cB}\\
        &= \{S\in \rad\mid \varphi_\alpha \Last S \converges{\alpha} S \text{ in } \|\cdot\| \}\\
        &= \{S\in \rad\mid T_{\Ber_\alpha(S)} \converges{\alpha} S \text{ in } \|\cdot\| \}\\
        &= \overline{\{T_a\mid a\in \cbufR^{K}\}}^{\cB}\\
        &= \toepalg^{K}\\
        &= \Lone^{K}\Last \cbuopL^{K}.
    \end{align*}
\end{theorem}

\begin{proof}
     One has that $T_{\Ber_\alpha(s)}= \Ber_\alpha(S)\Last \Phi$ by Lemma~\ref{lem:Toeplitz_convolution}. Moreover, if $S\in \cbuopL$, then $T_{\Ber_\alpha(S)}=\varphi_\alpha \Last S\converges{\alpha} S$ by Theorem~\ref{prop:convergence approx identity} and Lemma~\ref{lem:radial Toeplitz Berezin}. As a consequence of the facts stated above, we have
    \begin{align*}
        \cbuopL^{K} &\supset \overline{\bddf^{K}\Last \traceop}^{\cB}\ \ \ \text{(by Corollary \ref{coro:radial convolutions in cbu})}\\
        &\supset \{S\in \rad\mid T_{\Ber_\alpha(S)} \converges{\alpha} S \text{ in } \|\cdot\| \}\\
        &= \{S\in \rad\mid\varphi_\alpha \Last S \converges{\alpha} S \text{ in } \|\cdot\| \}\\
        &\supset \cbuopL^{K},        
    \end{align*}
    and hence 
    \begin{align*}
        \cbuopL^{K}&= \overline{\bddf ^{K}\Last \traceop}^{\cB}\\
        &=\{S\in \rad\mid \varphi_\alpha \Last S \converges{\alpha} S \text{ in } \|\cdot\| \}\\
        &= \{S\in \rad\mid T_{\Ber_\alpha(S)} \converges{\alpha} S \text{ in } \|\cdot\| \}.
    \end{align*}
    Since $\Ber_\alpha(S)=S \Last \Phi_\alpha  \in \cbufR$, it follows that
    \[
        \cbuopL^{K}\subseteq\overline{\{T_a\mid a\in \cbufR^{K}\}}^{\cB}\subseteq \toepalgrad.
    \]
    Finally, since the radial Toeplitz operators $T_a=a\Last \Phi$ are in $\cbuopL^{K}$ we get
    \[
     \cbuopL^{K}=\overline{\{T_a\mid a\in \cbufR^{K}\}}^{\cB}= \toepalgrad. 
    \]
    The proof of $$\cbuopL^{K}=\overline{\Lone^{K}\Last \cbuopL^{K}}^{\cB}$$
    is similar and the last equality is an application of the Cohen-Hewitt theorem.
\end{proof}

\subsection{Discussion}
By the work of Xia, we have that Toeplitz operators are dense in the full Toeplitz algebra $\toepalg$. While we do not prove that $T_{\Ber_\alpha(S)}\stackrel{\alpha\rightarrow\infty}{\longrightarrow} S$ for all operators $S\in \toepalg$, we do have that this convergence holds for Toeplitz operators and also for operators in the larger dense subset $\bddf\Last \traceop$ of $\toepalg$. 
We point out that if, in fact, $\bddf\Last \traceop$ is closed, i.e. the Toeplitz algebra factors as $\toepalg=\bddf\Last \traceop$, then the convergence $T_{\Ber_\alpha(S)}\stackrel{\alpha\rightarrow\infty}{\longrightarrow} S$ would hold in operator norm for any operator in the Toeplitz algebra $\toepalg$.

%============================================================
 %                          General QHA
%============================================================
\appendix
\section{QHA on locally compact unimodular groups}\label{sec:general QHA}

In \cite{H23}, the author gives a comprehensive introduction to QHA on locally compact topological groups, which are also implicitly assumed to be second countable.  All finite-dimensional Lie groups are locally compact and second countable, so everything in that paper is applicable here.

On the other hand, we only consider semisimple Lie groups, which are unimodular.  The general theory elaborated in \cite{H23}, in which one considers groups which are not necessarily unimodular, has several technical complications.  For instance, for a square-integrable unitary representation $(\pi,\cH)$ of a unimodular group $G$, all vectors in $\cH$ are admissible (that is, they give rise to square-integrable matrix-coefficient functions), whereas if $G$ is not unimodular, only a dense subspace of vectors in $\cH$ give rise to square-integrable matrix coefficients.  Furthermore, square-integrable representations of unimodular groups satisfy a generalization of the Schur orthogonality relations that hold for compact groups in which the dimension of the representation is replaced with its \textit{formal dimension}. 

Because of the somewhat different (and simplified) nature of QHA for unimodular groups, we collect here the results that we need, and we include proofs in some of the cases in which they are simpler than in the general (not necessarily unimodular) case.  Additionally, we investigate some convergence properties that are required later. We start with a discussion on weak integrals and then review some results on tensor products and Schatten-$p$ classes of operators. 

%===========================================================
\subsection{Weak integrals}
In this section, we very briefly review some properties of the weak integral for vector-valued functions.  We refer to \cite{Folland,R73} for discussions on weak integrals of functions with values in a topological vector space. 

Recall that if $\cX$ is a reflexive Banach space, $(X,\mathfrak{A},\nu)$ is a measure space, and $F:X\rightarrow\cX$ is a weakly measurable function such that $\int_X \|F(x)\| d\nu(x) <\infty$, then there exists exactly one element $Y\in\cX$ such that:

\[
    \langle Y, \lambda \rangle = \int_X \langle F(x), \lambda \rangle d\nu(x)
\]
for all $\lambda\in\cX^*$, where $\langle, \rangle$ denotes the bilinear pairing between $\cX$ and its dual $\cX^*$.  We define 
\[
    \int_X F(x) d\nu(x) := Y
\]
and note that 
\begin{equation}\label{eq:ineq_weak_integral}
    \left\|\int_X F(X) d\nu(x)\right\| \leq \int_X \|F(x)\| d\nu(x)
\end{equation}

When $\cX$ is not a reflexive Banach space, one may still define a sort of weak integral in many cases.  One chooses a subspace $\cC\subseteq \cX^*$ of functionals that separate the points of $\cX$, and then one defines $\int_X F(x) d\nu(x)\in\cX$ to be the unique element of $\cX$ that satisfies
\begin{equation}\label{eq:definition_weak_integral}
    \left\langle \int_X F(x)d\nu(x), \lambda \right\rangle = \int_X \langle F(x), \lambda \rangle d\nu(x) 
\end{equation}
for all $\lambda\in\cC$, whenever such an element exists.

For instance, if $\cX = \cB(\cH)$ is the Banach space of bounded operators with respect to a Hilbert space $\cH$, then one can take $\cC$ to be the space of functionals generated by those of the form:
\begin{align*}
    \lambda_{v,w} : \cB({\cH}) & \rightarrow \C \\
                        A     & \mapsto \langle Av,w \rangle
\end{align*}
for $v,w\in\cH$.  Then, for each weakly measurable function $F:X\rightarrow \cB(\cH)$ satisfying $\int_X \|F(x)\| d\nu(x) < \infty$, one has that $\int_X F(x) d\nu(x)$ exists weakly and satisfies the inequality in (\ref{eq:ineq_weak_integral}) with respect to the operator norm on $\cB(\cH)$.  
Sometimes, the integral of a $\cB(\cH)$-valued function will satisfy (\ref{eq:definition_weak_integral}) with respect to a larger class of functionals.  We will have occasion to use this in Section~\ref{sec:general QHA}. 

As another example, suppose that $\cB = \cS^1(\cH)$, the space of trace-class operators on a Hilbert space $\cH$.  Then one has the Banach-space dualities $\cK(\cH)^* = \cS^1(\cH)$ and $\cS^1(\cH)^* = \cB(\cH)$, where $\cK(\cH)$ is the space of compact operators on $\cH$, and where the dualities are given by $(A,B) \mapsto \Tr(AB)$.  For a weakly-measurable function $F:X\rightarrow \C$ such that $\int_X \|F(x)\|d\nu(x)<\infty$, one can then define $\int_X F(x)d\nu(x)\in\cS^1(\cH)$ to be the unique element satisfying 
\[
    \Tr \left(\int_X F(x) d\nu(x) A \right) = \int_X \Tr(F(x)A) d\nu(x)
\]
for all $A\in\cK(\cH)$ (we use here that $\cK(\cH)^* = \cS^1(\cH)$.  Finally, the inequality (\ref{eq:ineq_weak_integral}) follows in this case from the fact that:
\[
    \left|\int_X \Tr(F(x)A) d\nu(x)\right| \leq \int_X |\Tr( F(x)A)| d\nu(x) 
         \leq \|A\|_\infty \int_X \|F(x)\|_1 d\nu(x)
\]
for all $A\in\cK(\cH)$, which implies that the norm of $\int_X F(x)d\nu(x)$ as a functional on $\cK(\cH)$ is bounded above by $\int_X \|F(x)\|_1 d\nu(x)$. 

\subsection{Integrated representations.}
\label{sec:integrated_rep}
One of the most important uses of vector-valued integration in representation theory is the construction of the \textit{integrated representation} corresponding to a unitary representation of a locally-compact group $G$.  In particular, whenever one has a norm-preserving representation $\pi$ of $\G$ on a reflexive Banach space $\cX$, the integrated representation is the representation 
\[
    \pi : L^1(G) \rightarrow \cB(\cX),
\]
also denoted $\pi$, of the Banach $*$-algebra $L^1(G)$ (with respect to convolution) on the Banach space $\cX$.  This representation is defined as follows. For any $F\in L^1(G)$ and $v\in \cX$, we define:
\begin{equation}\label{eq:integrated_representation_definition}
    \pi (F)v = \int_\G F(x)\pi (x)v\,d\mu (x),\quad F\in L^1 (G).
\end{equation}
Recall that this weak integral satisfies 
\begin{equation} \label{eq:ConNorm}
    \|\pi (F)v\| \le \|F\|_1\|v\| .
\end{equation}
for all $F\in L^1(G)$ and $v\in \cX$.  Whenever no confusion will arise, we will also sometimes write $F*u:=F*_\pi u := \pi (F)u$.

Furthermore, if $\cX = \cB(\cH)$ is the Banach space of bounded operators, and $\pi$ is a norm-preserving strongly continuous (with respect to the strong operator topology on $\cB(\cH)$) representation on $\cB(\cH)$, then one can use the weak operator integral defined in the previous section to define the weakly-integrated representation of $L^1(G)$ in $\cB(\cH)$ (that is, $\pi(F) \in\cB(\cB(\cH))$ for all $F\in L^1(G)$) using the same integral as in $(\ref{eq:integrated_representation_definition})$, and it also satisfies $(\ref{eq:ConNorm})$.

Of course, one must show that the integrated representation $\pi$ is in fact a representation of the Banach $*$-algebra $L^1(G)$.  This is the content of the following lemma, which is well-known consequence of Fubini's theorem and a simple change of variables. We prove only the last statement. 
\begin{lemma}\label{lem:ConvRep}
    Let $(\pi, \cB)$ be a norm-preserving representation of $\G$ on a Banach space, and let $F,F_1,F_2$ be measurable functions on $G$.  Whenever the integrals used for convolution or constructing the integrated representation exist (in particular, if $F, F_1, F_2\in L^1(G)$), one has that:
    \begin{enumerate}
        \item $\pi (F_1*_\actL F_2) = \pi (F_1)\pi (F_2)$. 
        \item If $\cB$ is a Hilbert space then $\pi (F)^* = \pi (F^*)$ where $F^*(x) = \overline{F (x^{-1})}$.
        \item $\pi (x)\pi(F)u= \pi (\ell_xF)u$.
    \end{enumerate}
\end{lemma}
\begin{proof}[Proof of (3).] We have that
\[
    \pi (x)\pi (F)u = \int_\G  F(y)\pi (xy)u\haar{y} = \int_\G (F(x^{-1}y)\pi (y)u \haar{y} = \pi(\ell_x F)u.\qedhere
\]
\end{proof}
\begin{remark} 
    \textrm{Note that in Proposition~\ref{prop:trace_convolutions} we have a situation where the function is assumed essentially bounded and not integrable, but the convolution still exists.}
\end{remark} 

%==========================================================
\subsection{Tensor products and the Schatten $p$ class}
\label{sec:tensor_products_and_Schatten_p_class}
Assume that $G$ is a locally-compact unimodular group and that $\pi$ is an irreducible unitary representation of $\G$ acting on a Hilbert space $\cH$. Let $\bddH$ be the space of all bounded operators on $\cH$ and let $\|\cdot \|$ denote the operator norm.  Assume that $\pi$ is a unitary representation of $\G$ acting on $\cH$. Then $\G$ acts on $\bddH$ by 
\[
    \tpi (x)S = \pi (x)S\pi(x)^{-1}.
\]
for all $x\in G$ and $S \in \bddH$.We note that an operator $S$ is $\G$-invariant if and only if $S$ is an intertwining operator. In particular, if $\pi$ is irreducible then $S$ is $G$-invariant if and only if $S =\lambda\rI$ for some $\lambda \in \C$.
 
% \blue{
% We also write for $\G = \SU (n,1)$.
 %\begin{equation}\label{eq:tpiz}
 %\tpi (z)S = \pi(z)S\pi (z) = \pi (\tau_z)S\pi(\tau_z)^* 
 %\end{equation} 
 %and note that this is well defined independent of the lifting of $\tau_z$ to $\G$ because any two liftings differs by a $\lambda\in\T$ which then
% cancels out.}
 
This action by conjugation on the space $\bddH$ of bounded operators on $\cH$ can be considered as a sort of generalized tensor product representation.  For each $u,v\in \cH$, one can define the rank-one operator $\tens{u}{v}$ by 
\((\tens{u}{v})(w) := \ip{w}{v}u.\) In this fashion, one can naturally identify the space of finite-rank operators on $\cH$ with the algebraic tensor product $\cH\otimes_{\textrm{alg}}\overline{\cH}$. Similarly, the Hilbert-space tensor product $\tens{\cH}{\cH}$ (defined by taking the Hilbert-space completion of the algebraic tensor product with respect to a natural inner product one defines on it) corresponds to the space of Hilbert-Schmidt operators on $\cH$.  It turns out that the action of $\tpi$ on the Hilbert-Schmidt operators $\cS^2(\cH)$ coincides with the unitary tensor-product representation $\tens{\pi}{\pi}$, as one can readily check for rank-one operators; in fact: 
\[
    \tpi (x)(\tens{u}{v})(w) = \ip{\pi (x)^{-1}w}{v}\pi (x)u = \ip{w}{\pi (x)v}\pi (x)u = (\tens{\pi}{\pi})(x)(\tens{u}{v})(w) 
\]
for all $x\in G$ and $u,v,w\in \cH$.

 For more information on tensor products of topological vector spaces, we refer to \cite[Part III]{T67} and  \cite[Appendix 2.1]{W84}. 

We also need to introduce the spaces of \textit{Schatten-$p$-class} operators $\cS^p(\cH)$, $1\le p\le \infty$. We remind the reader that when $1\leq p < \infty$, these are the operators with finite $p$-norm  $||A||_p:=\tr(|A|^p)^{1/p}$, where $|A|=\sqrt{A^*A}$). We define $\cS^\infty(\cH) := \cB(\cH)$ to be the space of all bounded operators with the operator norm.  Note that $\cS^1(\cH)$ is the algebra of trace-class operators and $\|\cdot\|_1$ is the trace norm, while $\cS^2(\cH)$ is the space of Hilbert-Schmidt operators and that $\|\cdot\|_2$ is the Hilbert-Schmidt norm. The algebraic tensor product $\cH\otimes_{\textrm{alg}}\overline{\cH}$ is contained in $\cS^p(\cH)$ for all $p$. Futhermore, $\cS^p(\cH)$ is $\G\times\G$-invariant. If $1\le p<\infty$, then the dual of $\cS^p(\cH)$ is $\cS^{p^\prime}(\cH)$, where $p^\prime =p/(1-p)$, and where the duality is given by 
\begin{equation}\label{eq;duality}
    \ip{S}{T}_{\Tr}= \Tr ST.
\end{equation}
 
 \begin{remark} 
     In the case of the Bergman spaces one can view $\Berg\times \overline{\Berg}$ as the Bergman space of holmorphic functions $f(z,w)$ on $\B^n\times \overline{\B^n}$, where the bar indicates that we have put an opposite complex structure on the second copy of the ball. In this case, the representation of $\G$ given by the restriction to the diagonal is isomorphic to the action of $G$ on $L^2(\B^n)$; see \cite{O00} for more details.
 \end{remark}

 \begin{remark}[Coefficient functions and square-integrable representations]
     Here we have viewed tensor products as operators. But it is quite common to view them also as functions. For that, one uses the coefficient functions $\pi_{u,v}$ or the wavelet transform $W_v(u)$, defined by:
     \[
        \pi_{u,v}(x) =\ip{u}{\pi (x)v} = W_v(u)(x). 
     \]
     for all $x\in G$. We also note that $W_v:\cH \to C(\G)$ is an intertwining operator.  That is, $W_v(\pi (y)u) = \ell_y W_v(u)$ (or, said another way, $\ell_y \pi_{u,v} = \pi_{\pi(y)u,v}$) for all $y\in G$.

     Recall that an irreducible unitary representation $(\pi,\cH)$ is said to be \textit{square integrable} if there exists a vector $v\in \cH\backslash \{0\}$ such that $W_v(v)\in L^2(\G)$. If $\G$ is unimodular, as in our case, we have that $W_v(v)\in L^2(\G)$ for one vector $v\in L^2(G)\backslash\{0\}$ if and only if $W_v(u)\in L^2(G)$ for all $u,v\in L^2(G)$ (see, e.g., \cite[Section IX.3]{Knapp}, \cite[Section 1.3]{Wallach1}, or \cite[Section 6.18]{FO14}. %Note that if $\pi$ is $p$-integrable and $q\ge p$, then $\pi$ is also $q$-integrable. 
     %Such vectors are said to be \text{admissible}, and the space of admissible vectors for a 
     %square-integrable irreducible representation is always dense.  In fact, for locally-compact unimodular groups, 
     %it is known that an irreducible unitary representation $(\pi,\cH)$ is square integrable if and only if $\pi_{u,v}\in L^2(G)$ for all $u,v\in \cH$.
     If $\pi$ is square integrable, then there exists a constant $d_\pi > 0$ such that 
     \[%\label{eq:formaldim}
         \langle \pi_{v_1,w_1}, \pi_{v_2,w_2} \rangle = \frac{1}{d_\pi} \langle v_1, v_2 \rangle \overline{\langle w_1, w_2 \rangle}
     \]
     for all $v_1,v_2,w_1,w_2\in\cH$.  One says that $d_\pi$ is the \textit{formal dimension} of $\pi$.

     Note that if $\{u_j\}_j$ is an orthogonal set of vectors in $\cH$, then $\bigoplus_j W_{u_j}(\cH )\subseteq L^2(\G )$ is an orthogonal direct sum. If $\G$ is unimodular as in our case and $\{e_j\}_j$ is an orthogonal basis, then $\bigoplus_j W_{u_j}(\cH )=L^2(\G)_{\pi}$ is the maximal $\G$-invariant subspace in $L^2(\G)$ containing a copy of $(\cH, \pi)$.
\end{remark} 

\subsection{Convolutions of functions and operators} 
\label{sec:convolutions_of_functions_and_operators}
From now on we will always assume that $\G$ is unimodular and that $(\pi,\cH)$ is a unitary representation of $\G$.  For a function $\psi$ on $\G$ and $S\in \cH$, the convolution $\psi\Gast S$ is, as always, the integrated representation:  
\begin{equation}%\label{eq:ConvFunOp}
   \psi\Gast S := \tpi (\psi) S =\int_{\G} \psi(g) \tpi (g)S \haar{g}.
\end{equation}
In particular,
\begin{equation}\label{eq:Conv2} 
    \ip{(\psi *_\pi S) u}{v} = \int_G \psi^\vee (x) \ip{S\pi (x)u}{\pi (x) v}\ d\mu_G(x) .
\end{equation}

\begin{example} 
    Assume that $S= \tens{w}{z}$ is a rank-one operator. Then
    \[\ip{S\pi (x^{-1})u}{\pi (x^{-1})v} = \ip{u}{\pi (x)z}\ip{\pi (x)w}{v} =W_z(u)(x)\overline{W_w(v)(x)}.\]
    Assume that $\pi$ is square integrable. Then  $x\mapsto \ip{\tpi (x)Su}{v}$ is integrable and for all $u,v,z,w\in \cH \backslash \{0\}$, we have that
    \[
        | \ip{\tpi (x)Su}{v}| \le \|u\|\|v\| \|z\|\|w\| |W_{z^\prime} (u^\prime )(x)| | W_{v^\prime}(w^{\prime} )(x)|,
    \]
    where we use the notation $y^\prime := y/\|y\|$ for all $y\in \cH$. We have that $|W_{z^\prime}(u^\prime)(x)|\le 1$ for all $x\in G$, and the wavelet function vanishes at infinity. It follows that for $p\ge 1$,  
    \[  
        |W_{z^\prime} (u^\prime )(x)|^p | W_{v^\prime}(w^{\prime} )(x)|^p \le |W_{z^\prime} (u^\prime )(x)| | W_{w^{\prime} }(v^\prime )(x)|.
    \]
    The right-hand side of the above inequality, considered as a function of $x\in G$, is integrable, and
    \[
        \|W_{z^\prime}(u^\prime )\overline{W_{ w^\prime}(v^\prime )}\|_p \leq \|W_{z^\prime}(u^\prime )\|_2^{1/p} \|W_{ w^\prime}(v^\prime )\|_2^{1/p}\le
        \frac{1}{d_\pi^{1/p}}.
    \]
    It follows that $\ip{\tpi (\cdot ) S u}{v}\in L^p(\G)$ for all $p\ge 1$.  
    In particular, if $\psi \in L^{p^\prime}(\G)$, where $p^\prime = p/(p-1)$,   then the integral \eqref{eq:Conv2} exists and
    \[
        \left|\int_\G a(x) \ip{S\pi (x)u}{\pi (x)v} \haar{x}\right|
        \le \|\psi \|_{p^\prime} \|W_z(u)\|_2\|W_w(v)\|_2\le \frac{\|\psi \|_{p^\prime}}{d_\pi} \|u\|\|v\|\| z\| \| w\| .
    \]
    Furthermore, if $u=v$ and $z=w$, then we have
    \[
        \ip{\psi *S u}{u} =\int_\G \psi(x)|W_u(z)(x)|^2\haar{x} .
    \]

    We further note that $S^* = \tens{z}{w}$, $S^*S = \|z\|^2\|w\|^2 \tens{w'}{w'}$ so that $|S| = \|z\|\|w\| \tens{w'}{w'}$.
    In particular $\|S\|_p =\|z\|\| w\|$ independent of $p$, and
    \[
        \left| \int_\G \psi (x)\ip{S\pi (x)u}{\pi (x)v} \haar{x}\right| \le \frac{1}{d_\pi} \|\psi\|_{p^\prime} \| \|S\|_p \|u\| \|v\|
    \]
    for all $u,v\in \cH$.
\end{example}

From Lemma~\ref{lem:ConvRep}:

\begin{lemma}\label{lem:assoc_Lone}
    Let $S\in \bddH$ and let $\psi,\psi_1,\psi_2\in L^1(\G)$ or assume that all integrals exists.
     % be a measurable functions on $G$ such that the following convolutions are defined. 
    Then the following holds:
    \begin{enumerate}
        \item $\ds \tpi(g) (\psi \Gast S) = (\actfL{g}{\psi})\Gast S$.
        \item $\ds \psi_1 \Gast (\psi_2 \Gast S) = (\psi_1 \Gast \psi_2) \Gast S$. 
        \item $\ds \|\psi\Gast S\|_{\infty} \leq \|\psi\|_1\|S\|_{\infty}$.
    \end{enumerate} 
\end{lemma}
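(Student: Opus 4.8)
The plan is to recognize the whole statement as a specialization of Lemma~\ref{lem:ConvRep} and the norm bound \eqref{eq:ConNorm} to the conjugation representation $\tpi$. The key observation is that the diagonal action $\tpi(g)S=\pi(g)S\pi(g)^{-1}$ is itself a norm-preserving representation of $\G$ on the Banach space $\bddH$: since $\pi$ is unitary, conjugation by $\pi(g)$ is an isometry for the operator norm, so $\|\tpi(g)S\|_\infty=\|S\|_\infty$. Moreover, by definition $\psi\Gast S=\tpi(\psi)S$ is precisely the integrated representation \eqref{eq:integrated_representation_definition} attached to $\tpi$. Thus everything in the lemma is an instance of the general machinery of Lemma~\ref{lem:ConvRep}, read with $\tpi$ in place of $\pi$ and $\bddH$ in place of $\cB$.

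Granting this identification, the three parts follow immediately. Part (3) is \eqref{eq:ConNorm} for $\tpi$, once one notes that $\|\cdot\|_\infty$ is the operator norm on $\bddH=\cS^\infty(\cH)$. Part (1) is Lemma~\ref{lem:ConvRep}(3) with $x=g$, $F=\psi$, and $u=S$; unwinding the integral and using left-invariance of Haar measure gives $\tpi(g)(\psi\Gast S)=\int_\G\psi(h)\tpi(gh)S\haar{h}=\int_\G(\actfL{g}{\psi})(h)\tpi(h)S\haar{h}=(\actfL{g}{\psi})\Gast S$. Part (2) is Lemma~\ref{lem:ConvRep}(1) evaluated at the operator $S$: writing $\psi_1\Gast\psi_2$ for the ordinary left convolution $\psi_1\last\psi_2$ of functions, one has $\psi_1\Gast(\psi_2\Gast S)=\tpi(\psi_1)\tpi(\psi_2)S=\tpi(\psi_1\last\psi_2)S=(\psi_1\last\psi_2)\Gast S$.

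The one point that needs care, and which I regard as the only real obstacle, is that $\bddH$ is not reflexive, so $\tpi(\psi)S$ must be interpreted as a weak operator integral rather than a Bochner integral, and one must verify that the integrated-representation formalism genuinely applies here. This was arranged in the preliminary discussion of weak integrals: for a norm-preserving representation on $\bddH$ that is strongly continuous in the strong operator topology, the integral \eqref{eq:integrated_representation_definition} exists weakly and satisfies \eqref{eq:ConNorm}. Norm-preservation is the unitarity computation above, and strong continuity holds because $g\mapsto\pi(g)S\pi(g)^{-1}u$ is continuous for each fixed $u\in\cH$ and $S\in\bddH$. Once this is in place, pulling the bounded operator $\tpi(g)$ through the weak integral in part (1), and the Fubini/change-of-variables argument underlying part (2), are all legitimate, exactly as in the proof of Lemma~\ref{lem:ConvRep} reproduced in the excerpt.
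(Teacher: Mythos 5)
Your proposal is correct and takes essentially the same approach as the paper: the paper's proof likewise obtains (1) and (2) as immediate consequences of Lemma~\ref{lem:ConvRep} and (3) from \eqref{eq:ConNorm}, applied to the weakly-integrated conjugation representation $\tpi$ on $\cB(\cH)$. The point you flag as the only real obstacle---that $\cB(\cH)$ is not reflexive, so $\tpi(\psi)S$ must be read as the weak operator integral satisfying \eqref{eq:ConNorm}---is precisely how the paper justifies the step, via its preliminary discussion of weak integrals for $\cB(\cH)$-valued functions.
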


\begin{proof}
    Statements (1) and (2) are immediate consequences of Lemma~\ref{lem:ConvRep}, while (3) and (4) follow from (\ref{eq:ConNorm}) with respect to the weak integrals we defined for $\cB(\cH)$- and $\cS^1(\cH)$-valued functions, respectively.
\end{proof}
%\begin{proof}
   % Notice that for $f_1,f_2\in \Berg$ and all $g\in G$,
    %\begin{align*}
     %   \ip{\actopL{g}{\psi \Gast S}f_1}{f_2} &=\ip{(\psi \Gast S)\pi(g)^{-1}f_1}{\pi(g)^{-1}f_2}\\
       % &= \int_{\G} \ip{S\pi(h)^{-1}\pi(g)^{-1}f_1}{\pi(h)^{-1}\pi(g)^{-1}f_2} \psi(h) \haar{h}\\
       % &= \int_{\G} \ip{S\pi((gh)^{-1})f_1}{\pi((gh)^{-1})f_2} \psi(h) \haar{h}\\
       % &= \int_{\G} \ip{S\pi(h^{-1})f_1}{\pi(h^{-1})f_2} \psi(g^{-1}h) \haar{h}\\
       % &= \ip{((\actfL{g}{\psi})\Gast S)f_1}{f_2}.
    %\end{align*}
    %Also, by Fubini's theorem, we have that
    %\begin{align*}
       % \ip{(\psi_1 \Gast (\psi_2 \Gast S))f_1}{f_2} &= \int_{\G} \ip{\actopL{g}{\psi_2 \Gast S}f_1}{f_2} \psi_1(g) \haar{g}\\
       % &= \int_{\G} \ip{((\actfL{g}{\psi_2}) \Gast S)f_1}{f_2} \psi_1(g) \haar{g}\\
       % & = \int_G \int_G \langle L_h(S)f_1, f_2 \rangle \psi_2(g^{-1}h) \psi_1(g) \haar{h} \haar{g} \\
       % & = \int_G \langle L_h(S)f_1,f_2 \rangle \int_G \psi_1(g)\psi_2(g^{-1}h) \haar{g} \haar{h}\\ %\text{ (by Fubini) } \\ 
       % &=  \ip{(\psi_1 \Gast \psi_2) \Gast S)f_1}{f_2}. \qedhere
    %\end{align*}
%\end{proof}

%\begin{remark}{\rm  One can prove 4 also by interpolation after doing it first for $p=1$ and $p=\infty$ (part 3), see \cite{H23}, Proposition 4.11). }
%\end{remark}

The norm bound in the following proposition is a less general version of \cite[Lem.\ 4.12]{H23}. Since we are assuming $G$ is unimodular, we include its more straightforward proof.

\begin{proposition}\label{prop:trace_convolutions}
   Suppose that $G$ is a unimodular locally-compact topological group.  Let $a\in L^\infty(G)$ and let $(\pi,\cH)$ be a square-integrable unitary representation of $G$ with formal dimension $d_\pi$. Let $S\in \cS^1(\cH)$ be a trace-class operator. Then the convolution $a\Gast S$ is well-defined and 
   \[
      \|a\Gast S\|_{\infty}\leq \frac{1}{d_\pi} \|a\|_\infty\|S\|_1.
   \]
   Furthermore,
   \[
      \one_G\Gast S = \int_G \pi(g) S \pi(g)^{-1} \, \haar{g} = \frac{1}{d_\pi} \Tr(S) \Id_\cH,
   \]
   where $\one_G:G\rightarrow\C$ is the constant function $g\mapsto 1$ and $\Id_\cH$ is the identity operator on $\cH$.
\end{proposition}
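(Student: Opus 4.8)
The plan is to prove the two assertions separately, establishing the norm bound first and then the trace identity via Schur's lemma.

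For the norm bound, I would observe that $a \Gast S = \wpi(a)S = \int_G a(g)\,\pi(g)S\pi(g)^{-1}\,d\mu_G(g)$ is the weak operator integral defined in the section on weak integrals. The integrand $g\mapsto \pi(g)S\pi(g)^{-1}$ takes values in $\cS^1(\cH)$ since conjugation by unitaries preserves the trace norm and $\|\pi(g)S\pi(g)^{-1}\|_1 = \|S\|_1$. To get integrability (the function $a$ is only bounded, not integrable), I would test the integral against rank-one functionals and use the square-integrability estimates from the preceding Example: for $u,v\in\cH$, the pairing $\ip{(a\Gast S)u}{v}$ is controlled by $\int_G |a(g)|\,|\ip{S\pi(g)u}{\pi(g)v}|\,d\mu_G(g)$, and the Example shows that $g\mapsto \ip{\wpi(g)Su}{v}$ lies in $L^1(G)$ with the relevant bound involving $\tfrac{1}{d_\pi}\|S\|_1\|u\|\|v\|$ (first for rank-one $S$, then by the trace-class decomposition $S=\sum_j \sigma_j\,\tens{u_j}{v_j}$ with $\sum_j\sigma_j=\|S\|_1$). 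Pulling $\|a\|_\infty$ out gives $|\ip{(a\Gast S)u}{v}|\le \tfrac{1}{d_\pi}\|a\|_\infty\|S\|_1\|u\|\|v\|$, hence $\|a\Gast S\|_\infty \le \tfrac{1}{d_\pi}\|a\|_\infty\|S\|_1$, which simultaneously justifies that the weak integral exists in $\bddH$.

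For the trace identity, set $T = \one_G\Gast S = \int_G \pi(g)S\pi(g)^{-1}\,d\mu_G(g)$; by the first part this exists as a bounded operator. The key is to check that $T$ is an intertwining operator: for any $x\in G$, applying Lemma~\ref{lem:assoc_Lone}(1) gives $\wpi(x)T = (\actfL{x}{\one_G})\Gast S = \one_G\Gast S = T$ since $\one_G$ is left-translation invariant, so $\pi(x)T\pi(x)^{-1}=T$ for all $x$, i.e. $T$ commutes with every $\pi(x)$. Since $\pi$ is irreducible, the discussion following the definition of $\wpi$ (Schur's lemma) forces $T = \lambda\,\Id_\cH$ for some scalar $\lambda\in\C$. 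To pin down $\lambda$, I would compute $\ip{Tu}{u}$ for a unit vector $u$ using \eqref{eq:Conv2} together with the rank-one computation in the Example: writing $S=\sum_j\sigma_j\,\tens{w_j}{z_j}$ and using $\ip{\wpi(g)Su}{u}=\sum_j\sigma_j W_{z_j}(u)(g)\overline{W_{w_j}(u)(g)}$, integrating over $G$ and invoking the orthogonality (Schur) relations $\langle \pi_{u,z_j},\pi_{u,w_j}\rangle = \tfrac{1}{d_\pi}\ip{u}{u}\overline{\ip{z_j}{w_j}}$ yields $\ip{Tu}{u}=\tfrac{1}{d_\pi}\sum_j\sigma_j\ip{z_j}{w_j}\|u\|^2 = \tfrac{1}{d_\pi}\Tr(S)\|u\|^2$, so $\lambda = \tfrac{1}{d_\pi}\Tr(S)$.

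The main obstacle is the integrability bookkeeping in the first part: because $a$ is merely essentially bounded while $G$ is noncompact, one cannot integrate $\|\pi(g)S\pi(g)^{-1}\|_1=\|S\|_1$ directly. The square-integrability of $\pi$ is exactly what rescues this—it forces the matrix coefficients $g\mapsto\ip{\wpi(g)Su}{v}$ to be genuinely integrable—so the argument must route through the coefficient-function estimates of the Example rather than through a naive norm bound on the integrand, and one should take care that the decomposition of $S$ into rank-one pieces converges appropriately in trace norm.
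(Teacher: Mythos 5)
Your proposal is correct, and it splits naturally into two halves. For the norm bound you argue essentially as the paper does: decompose $S$ into rank-one pieces (your Schmidt decomposition $S=\sum_j\sigma_j\,\tens{u_j}{v_j}$ is equivalent to the paper's polar-decomposition form $S=UP$ with $P=\sum_i c_i\, e_i\otimes\overline{e_i}$), test the weak integral against pairs of vectors, and control each rank-one term by the square-integrability (Schur orthogonality) estimates, exactly the content of the Example preceding the proposition. For the trace identity, however, you take a genuinely different route. The paper computes $\langle(\one_G\Gast S)w_1,w_2\rangle$ directly for \emph{all} pairs $w_1,w_2$, applying the orthogonality relations term by term to get $\frac{1}{d_\pi}\Tr(S)\langle w_1,w_2\rangle$ outright. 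You instead first observe, via Lemma~\ref{lem:assoc_Lone}(1) and the left-invariance of $\one_G$, that $\one_G\Gast S$ commutes with every $\pi(x)$, invoke Schur's lemma to conclude it is $\lambda\,\Id_\cH$, and then evaluate $\lambda$ from a single diagonal matrix element. Your route is more conceptual (it explains structurally \emph{why} the answer is a multiple of the identity) and requires only one orthogonality computation; the paper's is more self-contained in that it never invokes Schur's lemma as a separate step, though both arguments rest on irreducibility anyway, since the orthogonality relations with a single scalar $d_\pi$ presuppose it. Two small points to tidy: (i) your application of Lemma~\ref{lem:assoc_Lone}(1) uses $\psi=\one_G\notin L^1(G)$, so you must appeal to the lemma's ``all integrals exist'' caveat, which is legitimate only because your first part has already established existence of the weak integral --- make that dependence explicit; (ii) there is a harmless conjugation slip in your scalar computation ($\ip{z_j}{w_j}$ versus $\ip{w_j}{z_j}$), which cancels against the corresponding convention in $\Tr(S)=\sum_j\sigma_j\ip{w_j}{z_j}$ and does not affect the conclusion.
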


\begin{proof}
    Suppose that $S$ is a trace-class operator in $\cL(\cH)$.  Then there exists a partial isometry $U\in\cL(\cH)$ such that $S=U P$, where $P=\sum_{i=1}^\infty c_i e_i \otimes \overline{e_n}$ for some orthonormal basis $\{e_n\}_{n\in\N}$ for $\cH$ and some $c_i\geq 0$ for $i\in\N$ (note that the sum which defines $P$ converges in the operator norm topology). 
    We see that $\|S\|_\infty = \|P\|_\infty$ and that $\|S\|_1 = \|P\|_1= \sum_{i=1}^\infty c_i$.  Furthermore, for all $w_1, w_2\in \cH$, we have that:
    \begin{align*}
        |\langle (a \Gast S) w_1, w_2 \rangle |& \leq \int_G |a(g) \sum_{i=1}^\infty c_i\langle \pi(g)U ( e_i \otimes \overline{e_i} )  \pi(g^{-1}) w_1, w_2 \rangle|\haar{g} 
        \displaybreak[0] \\
        & \leq \|a\|_\infty \sum_{i=1}^\infty c_i \int_G | \langle \pi(g) U \langle  \pi(g^{-1}) w_1, e_i \rangle e_i, w_2 \rangle | \haar{g} 
        \displaybreak[0] \\
        & =  \|a\|_\infty \sum_{i=1}^\infty c_i \int_G | \langle w_2, \pi(g) U e_i\rangle \langle w_1, \pi(g)  e_i\rangle |\haar{g} 
        \displaybreak[0] \\ 
        & \leq \frac{1}{d_\pi}\|a\|_\infty \sum_{i=1}^\infty c_i \|\pi_{w_2,U e_i}\|_2 \|\pi_{w_1, e_i}\|_2 
        \displaybreak[0] \\
        & = \frac{1}{d_\pi} \|a\|_\infty \left(\sum_{i=1}^\infty c_i\|w_1\|\, \|w_2\|\,\|U e_i\|\,\| e_i\|\right) 
        \displaybreak[0] \\
      & \leq \frac{1}{d_\pi} \|a\|_\infty \left(\sum_{i=1}^\infty c_i \right)\, \|w_1\|\, \|w_2\|,
   \end{align*}  
   where we have used that each $e_i$ is a unit vector and that $U$ is a partial isometry. Since this holds for all $w_1,w_2\in\cH$, we can conclude that
   \[
      \|a\Gast S\| \leq \frac{1}{d_\pi} \|a\|_\infty \sum_{i=1}^\infty c_i = \frac{1}{d_\pi}\|a\|_\infty \|S\|_1.
   \]

   To prove the second assertion, we notice that, for all $w_1, w_2 \in\cH$:
   \begin{align*}
      \langle (\one_G \Gast S) w_1, w_2 \rangle & = \int_G  \sum_{i=1}^\infty c_i \langle \pi(g)U ( e_i \otimes \overline{e_i} )  \pi(g^{-1}) w_1, w_2 \rangle \haar{g} \displaybreak[0] \\
                                           & = \sum_{i=1}^\infty c_i \int_G  \langle \pi(g) U \langle  \pi(g^{-1}) w_1, e_i \rangle e_i, w_2 \rangle  \haar{g}\displaybreak[0] \\
                                           & =  \sum_{i=1}^\infty c_i \int_G  \langle w_1, \pi(g)  e_i\rangle \overline{\langle w_2, \pi(g) U e_i\rangle} \haar{g} \displaybreak[0] \\ 
      & = \frac{1}{d_\pi} \sum_{i=1}^\infty c_i \langle w_1, w_2 \rangle \langle Ue_i,e_i \rangle \displaybreak[0] \\ 
      & = \frac{1}{d_\pi} \langle w_1, w_2 \rangle \left(\sum_{i=1}^\infty \sum_{k=0}^\infty c_i \langle Ue_i, e_k \rangle \langle e_k, e_i \rangle \right) \displaybreak[0] \\
      & = \langle w_1, w_2 \rangle \frac{1}{d_\pi} \sum_{k=0}^\infty \sum_{i=1}^\infty c_i \langle U(e_i \otimes \overline{e_i})e_k, e_k \rangle \displaybreak[0] \\
      & = \frac{1}{d_\pi} \Tr(S) \langle w_1, w_2 \rangle,
   \end{align*}
   thus proving the assertion.
\end{proof}

We refer the reader to \cite[Chap. 2]{Z07} for an overview of interpolation theory, as we need to use some results in what follows. 

Let $S\in\cS^1(\cH) $ and $\psi \in L^p (\G)\cap L^1(\G)\cap L^\infty (\G)$. Then $F_S(\psi ) = \psi *S$ is well defined and contained in $\cS^1 (\cH) \cap \cS^\infty (\cH)=\cS^1(\cH)$  with norm estimates given by Lemma~\ref{lem:assoc_Lone} and Proposition~\ref{prop:trace_convolutions}. Similarly, we can fix $\psi \in L^1(\G)$ and consider the map $S\mapsto \psi\ast_\pi S$, which is well defined on $\cS^1(\cH)$ and $\cB (\cH)$, with norm estimates given by Lemma~\ref{lem:assoc_Lone}. By interpolation, we now have the following:

\begin{theorem}\label{thm:Intp1} 
    Let $(\pi,\cH)$ be a unitary representation of $G$.  Then
    \begin{enumerate}
        \item Suppose that $\pi$ is square integrable, and that $S\in \cS^1(\cH)$ and $\psi \in L^p(\G)$, $1\le p \le \infty$. Then $\psi \ast_\pi S$ extends to a bounded linear operator on $\cH$; in fact, $\psi \ast_\pi S\in \cS_p(\cH)$ with $p$-norm 
            \[
                \|\psi\ast_\pi S\|_p \le\left(\frac{1}{ d_\pi }\right)^{\frac{1}{p^\prime} }\|\psi\|_p\|S\|_1.
            \]
        \item Let $\psi\in L^1 (\G)$,  $S\in\cS^p(\cH)$, and let $1\le p \le \infty$. Then $\psi \ast_\pi S$ defines a bounded linear operator
            on $\cH$, $\psi \ast_\pi S\in \cS^p(\cH)$ and $ \|\psi\Gast S\|_p \leq \|\psi\|_1\|S\|_p$.
        %\begin{equation}\label{eq:Lp bound}.  <--- we never refer to this label GO.
        %    \|\psi\Gast S\|_p \leq \|\psi\|_1\|S\|_p.
        %\end{equation}
    \end{enumerate}
\end{theorem}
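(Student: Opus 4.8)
The plan is to deduce both statements from complex interpolation, in each case viewing $\psi \Gast S$ as a single linear operator whose behavior at the two endpoints $p=1$ and $p=\infty$ is already known. The two interpolation scales we need are
\[
    [L^1(\G), L^\infty(\G)]_\theta = L^p(\G) \quad\text{and}\quad [\cS^1(\cH), \cS^\infty(\cH)]_\theta = \cS^p(\cH),
\]
both with $1/p = 1-\theta$, i.e.\ $\theta = 1/p'$; see \cite[Chap.~2]{Z07}. The first is Riesz--Thorin, and the second is the noncommutative (Schatten-class) analogue.

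For (1), fix $S\in\cS^1(\cH)$ and consider the linear map $T_S\colon \psi \mapsto \psi \Gast S$. Since each $\pi(g)$ is unitary we have $\|\pi(g)S\pi(g)^{-1}\|_1 = \|S\|_1$, so the inequality (\ref{eq:ineq_weak_integral}) for the $\cS^1(\cH)$-valued weak integral gives the endpoint bound $\|T_S\psi\|_1 \le \|\psi\|_1\|S\|_1$ for $\psi\in L^1(\G)$, matching the claimed constant at $p=1$ (where $(1/d_\pi)^{1/p'}=1$). At the other endpoint, Proposition~\ref{prop:trace_convolutions} gives $\|T_S\psi\|_\infty \le \tfrac{1}{d_\pi}\|\psi\|_\infty\|S\|_1$ for $\psi\in L^\infty(\G)$. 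These are boundedness statements $L^1(\G)\to\cS^1(\cH)$ and $L^\infty(\G)\to\cS^\infty(\cH)$ for one and the same operator, consistently defined on $L^1(\G)+L^\infty(\G)$ because on the overlap $L^1(\G)\cap L^\infty(\G)$ both definitions are the weak operator integral $\int_\G \psi(g)\pi(g)S\pi(g)^{-1}\haar{g}$. Complex interpolation with $\theta=1/p'$ then yields $T_S\colon L^p(\G)\to\cS^p(\cH)$ with operator norm at most
\[
    \|S\|_1^{1-\theta}\Bigl(\tfrac{1}{d_\pi}\|S\|_1\Bigr)^{\theta} = \Bigl(\tfrac{1}{d_\pi}\Bigr)^{1/p'}\|S\|_1,
\]
which is exactly the asserted estimate.

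For (2), fix $\psi\in L^1(\G)$ and consider $R_\psi\colon S\mapsto \psi \Gast S$, which is a single operator defined on all of $\cB(\cH)=\cS^1(\cH)+\cS^\infty(\cH)$ via the $\cB(\cH)$-valued integrated representation; here the consistency issue is trivial. By Lemma~\ref{lem:assoc_Lone} it satisfies $\|R_\psi S\|_1\le\|\psi\|_1\|S\|_1$ on $\cS^1(\cH)$ and $\|R_\psi S\|_\infty\le\|\psi\|_1\|S\|_\infty$ on $\cS^\infty(\cH)=\cB(\cH)$. Interpolating the Schatten scale with $\theta=1/p'$ gives $R_\psi\colon\cS^p(\cH)\to\cS^p(\cH)$ with norm at most $\|\psi\|_1^{1-\theta}\|\psi\|_1^{\theta}=\|\psi\|_1$, as claimed.

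Since the endpoint estimates are already available from Lemma~\ref{lem:assoc_Lone} and Proposition~\ref{prop:trace_convolutions}, the only genuine points requiring care are (a) checking that $T_S$ and $R_\psi$ are single operators defined consistently on the relevant sum spaces, a prerequisite for applying the interpolation functor, and (b) invoking the identification $[\cS^1(\cH),\cS^\infty(\cH)]_\theta=\cS^p(\cH)$. I expect (b) to be the main external input: it is the operator-theoretic (Riesz--Thorin) statement for Schatten classes, and once it and the mild consistency in (a) are granted, the stated bounds follow immediately from the interpolation inequality, with the exponent $\theta=1/p'$ producing precisely the power of $1/d_\pi$ appearing in (1).
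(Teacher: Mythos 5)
Your proposal is correct and takes essentially the same approach as the paper: the paper proves this theorem precisely by interpolating between the endpoint estimates supplied by Lemma~\ref{lem:assoc_Lone} (the $L^1\to\cS^1$ and $\cS^\infty\to\cS^\infty$ bounds) and Proposition~\ref{prop:trace_convolutions} (the $L^\infty\to\cS^\infty$ bound carrying the constant $1/d_\pi$), fixing $S\in\cS^1(\cH)$ for part (1) and $\psi\in L^1(\G)$ for part (2) exactly as you do. Your write-up merely makes explicit the consistency of the operator on the sum space and the identification $[\cS^1(\cH),\cS^\infty(\cH)]_\theta=\cS^p(\cH)$, which the paper leaves implicit in its appeal to \cite[Chap.~2]{Z07}.
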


%We also get using interpolation, compare with \cite[Prop. 4.13]{H23}, noting that every $S\in \cS^1(\cH)$ is admissible \cite[Def. 4.3]{H23}
%as $\G$ is assumed unimodular :

%\begin{theorem}\label{thm:Intp2}
%Assume that $(\pi,\cH)$ is square integrable and that $\G $ is unimodular. Let $1\le p \le \infty$ and let $p^\prime =p/(p-1)$. Assume
%that $\psi \in L^p(\G)$ and $S\in \cS^1(\cH)$. Then
%\[\psi *_\pi S \in \cS^p(\cH)\quad \text{and}\quad \|\psi*_\pi S\|_p \le \|\psi\|_p \|S\|_1.\]
%\end{theorem}
%
%================================================
\subsection{A twisted convolution of two operators}
\label{sec:twisted_convolution_of_operators}
In this subsection we assume, except where stated, that $(\pi,\cH)$ is a unitary representation of the unimouldar group $\G$. Motivated by the $\cS^p-\cS^{p^\prime}$-duality from \eqref{eq;duality}, given by  $\ip{S}{T}_{\Tr} = \Tr ST$ for $S\in \cS^p$ and $T\in \cS^{p'}$, we define the \textit{(twisted) convolution} $S\ast A$ of two operators on $\cH$ to be the function on $\G$ given by
\[%\label{def:ConvOp}
    (S\Gast A)(x)=\Tr(S (\tpi (x){A})) = \Tr(S\pi(x)A\pi(x)^{-1}) = \ip{S}{\tpi (x)A},\quad x\in \G,
\]

whenever the operator $S\tpi (g)A$ is trace clase for almost all $g\in G$. In particular, this is well defined if $S\in \cS^p(\cH)$ and $A\in \cS^{p^\prime}(\cH)$. It follows almost immediately (from the fact that $\Tr(AB) = \Tr(BA)$ for all bounded operators $B$ and $A$ such that $AB$ is trace class) that
\begin{equation}
    \label{eq:operator_convolution_commutation}
    (S\Gast A)=(A\Gast S)^\vee  .
\end{equation}
 
\begin{remark} 
    One motivation to use the trace in the definition of convolution is that in non-commutative geometry trace functionals are used as a generalization of integrals (see \cite[Chap IX]{T03}). However, if one replaces $\cS^p$ (respectively, $\cS^{p^\prime}$), by $L^p (\G)$ (respectively, $L^{p^\prime}(\G)$) and take $\tpi$ as $\ell$, then this duality gives
    \[
        \ip{\psi}{\ell_g\phi} =\int \psi (x) \phi (g^{-1}x)\haar{x} = \psi * \phi^\vee (g) .
    \]
    In order to properly define a ``convolution of operators'', one would thus want to choose an anti-involution $A\mapsto A^\vee$ on $\cB(\cH)$ and then define $S*_\pi A(g) = \Tr (S\pi(g) A^\vee)$.  
        In fact, in QHA for the Fock space $\cF^2(\C^n)$, one normally chooses the antinvolution on $\cB(\cF^2(\C^n))$ to be of the form $A\mapsto VAV$, where $V\in\cB(\cF^2(\C^n))$ is defined by $Vf(z) = f(-z)$ for all $z\in \C^n$ and $f\in\cF^2(\C^n)$.  On the other hand, if we consider the Bergman space $\Berg$ as a space of right-$K$-invariant functions on $G$, then one might naively try to define an anti-involution $V$ by $VF(x) = F(x^{-1})$ for all $F:G\rightarrow \C$ and $x\in G$. However, in general, $VF$ is not right-$K$-invariant for all right-$K$-invariant functions $F:G\rightarrow\C$. Similarly, if one considers $\Berg$ as a space of functions on $G/K$, then one might be tempted to define $V$ by $VF(x\cdot 0) := F(\theta(x)\cdot 0)$ for all $x\in G$, where $\theta$ is the Cartan involution fixing $K$ (this would correspond to the mapping $V$ on functions from $\B^n$ to $\C$ defined by $Vf(z):= f(-z)$ for all $z\in \B^n$).  Unfortunately, this defines an involution rather than an anti-involution on convolution algebras of functions defined on $G/K$.  Thus, when discussing QHA for locally compact groups, it is not clear that there is a natural anti-involution that one can use here, and so we resort to using the twisted convolution we defined above. 
\end{remark}

The following commutativity and associativity properties between convolutions of different types holds.

\begin{lemma}\label{lem:associativity}
We have the following:
    \begin{enumerate}
        \item Let $S \in \cB(\cH)$, $A\in \cS^1(\cH)$, and $a\in \bddfG$.  Then
            \[
                (\psi\Gast S)\ast A= \psi \Gast (S\ast A)
            \]
        \item Suppose that $\pi$ is square integrable, and let $A,B\in\cS^1(\cH)$, and let $a\in L^\infty(G)$.  Then 
            \[
                (a \Gast A)\ast B= a \Gast (A\ast B)
            \]
    \end{enumerate}
\end{lemma}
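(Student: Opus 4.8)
The plan is to prove both identities by fixing $x\in\G$ and checking that the two functions of $x$ agree pointwise. Each claimed equality is an instance of mixed associativity: on the left one first forms the operator $\psi\Gast S$ (resp.\ $a\Gast A$) and then takes its operator--operator convolution with $A$ (resp.\ $B$), which by definition is the function $x\mapsto\Tr\big((\psi\Gast S)\tpi(x)A\big)$; on the right one first forms the function $S\ast A$ (resp.\ $A\ast B$) and then left-convolves it with the function $\psi$ (resp.\ $a$). So I would expand the left-hand side using the definition of $\ast$ for operators together with the weak-integral definition of $\psi\Gast S$, and recognize the outcome as the left convolution on the right.

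The analytic core is a single trace identity. Writing $\tpi(g)S=\pi(g)S\pi(g)^{-1}$ and using cyclicity of the trace together with the homomorphism property $\pi(g)\pi(h)=\pi(gh)$, I would show
\[
    \Tr\big(\tpi(g)S\,\tpi(x)A\big)=\Tr\big(\pi(x^{-1}g)\,S\,\pi(g^{-1}x)\,A\big)=\Tr\big(S\,\tpi(g^{-1}x)A\big)=(S\ast A)(g^{-1}x),
\]
where the last two equalities use $\pi(x^{-1}g)=\pi(g^{-1}x)^{-1}$, cyclicity of the trace, and the definition of $\ast$. Granting that the trace may be moved inside the defining integral of $\psi\Gast S$, this gives
\[
    \big((\psi\Gast S)\ast A\big)(x)=\int_\G \psi(g)\,(S\ast A)(g^{-1}x)\haar{g},
\]
which is the left convolution of $\psi$ with the function $S\ast A$, i.e.\ the right-hand side $\psi\Gast(S\ast A)$. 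Part~(2) follows from the identical computation after replacing $(\psi,S,A)$ by $(a,A,B)$.

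The main obstacle is justifying the interchange of trace and weak integral, namely that the weakly defined operator $\psi\Gast S$ (resp.\ $a\Gast A$) satisfies \eqref{eq:definition_weak_integral} against the functional $T\mapsto\Tr(T\tpi(x)A)$ (resp.\ $T\mapsto\Tr(T\tpi(x)B)$); this is exactly the ``larger class of functionals'' foreseen in the discussion of weak integrals. I would reduce to the rank-one functionals $\lambda_{e_k,f_k}$ against which the weak integral is defined: taking a singular-value decomposition $\tpi(x)A=\sum_k s_k\,\tens{e_k}{f_k}$ with $\sum_k s_k=\|A\|_1$ gives $\Tr(T\tpi(x)A)=\sum_k s_k\ip{Te_k}{f_k}$, and I would then interchange $\sum_k$ with $\int_\G$ by Fubini.

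For part~(1) the interchange is immediate: since $|\ip{\tpi(g)Se_k}{f_k}|\le\|S\|$ and $\psi\in L^1(\G)$ with $\sum_k s_k<\infty$, the double integral converges absolutely, and $S\ast A$ is bounded so the final $L^1$-against-$L^\infty$ convolution exists. For part~(2), where $a$ is only essentially bounded and $\G$ is noncompact, the crude bound fails and square-integrability of $\pi$ is essential. Here I would estimate, using unimodularity and the trace identity with $x=\id$,
\[
    \int_\G|a(g)|\,|\ip{\tpi(g)Ae_k}{f_k}|\haar{g}\le\|a\|_\infty\,\big\|A\ast\tens{e_k}{f_k}\big\|_1\le\tfrac{1}{d_\pi}\|a\|_\infty\|A\|_1,
\]
the last inequality being the $L^1$ bound on operator convolutions of trace-class operators that follows from the square-integrability estimates in the Example above. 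Summing against $s_k$ yields the required domination, and the same estimate shows $A\ast B\in L^1(\G)$, which guarantees convergence of the final convolution integral.
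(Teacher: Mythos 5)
Your proof is correct and follows essentially the same route as the paper's: expand the definition of $(\psi\Gast S)\ast A$, interchange trace and integral, and use cyclicity of the trace together with $\pi(g)^{-1}\pi(x)=\pi(g^{-1}x)$ to identify the integrand as $(S\ast A)(g^{-1}x)$, hence the whole expression as a left convolution. The only difference is that you make explicit the justification of the trace/integral interchange (singular-value decomposition plus Fubini, and the square-integrability estimate needed in part (2) where $a$ is merely bounded), which the paper performs implicitly through the duality pairing and dismisses with ``the proof of (2) is similar.''
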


\begin{proof}
    To prove (1), we note that 
    \begin{align*}
        ((\psi\Gast S)\Gast A)(g)&=\Tr((\psi\Gast S)\actopL{g}{A})\\
                                 &= \ip{(\psi\Gast S)}{\tpi (g) {A}}_{\Tr}\\
        &=\ip{\psi}{\tpi (g) {A}\ast S}_\tr\\
        &=\int_\G \psi(h) \Tr(\actopL{g}{A}\actopL{h}{S}) \haar{h}\\
        &=\int_\G \psi(h) \Tr(S\actopL{h^{-1}g}{A}) \haar{h}\\
        &=\int_\G \psi(h) (S\Gast A)(h^{-1}g) \haar{h}\\
        &= \psi\Gast (S\Gast A).
    \end{align*}
    The proof of (2) is similar. 
\end{proof}

\begin{lemma}\label{lem:trace_class_bounded_operator_convolution_inequality}
    Suppose that $1\leq p < \infty$ and $p'>1$ is s.t. $\frac{1}{p}+\frac{1}{p'}=1$.  Let $A\in \cS^p(\cH)$ and let $S\in \cS^{p'}(\cH) $. Then $A\ast_\pi S,\, S\ast_\pi A\in \bddfG$ and
    \[
        \|A\ast_\pi S\|_\infty = \|S\Gast A\|_\infty \leq \|A\|_p\|S\|_{p'}.
    \]
\end{lemma}
\begin{remark}
Note that the case $p=1$ applies to all trace class operators $A\in\cS^1(\cH)$ and all bounded operators $S\in\cB(\cH)$.
\end{remark}

\begin{proof}
    First, suppose that $p=1$. Recall the well-known fact that if $A\in \cB (\cH)$ and $S\in \cS^1(\cH) $, then $AS, SA \in \cS^1(\cH) $, and that, furthermore, $|\mathrm{Tr}(AS)| = |\mathrm{Tr}(SA)| \leq \|A\|_{\op} \|S\|_1$.  The result then follows from the fact that   $\|\pi(g) A \pi(g)^{-1}\|_{\op} = \|A\|_{\op}$.

    Similarly, if $1 < p < \infty$, then the fact that the bilinear map 
    \[
        \langle \, , \, \rangle_{\mathrm{\Tr}} : \cS^p(\cH) \times \cS^{p'}(\cH) \rightarrow \C
    \]   
    defined above using the trace induces a duality between $\cS^p$ and $\cS^{p'}$ implies that, for all $A\in \cS^p(\cH)$ and all $S\in\cS^{p'}(\cH)$, one has that $\Tr(AS) = \Tr(SA)$ and that $|\Tr(AS)| \leq \|A\|_p \|S\|_{p'}.$  

    Furthermore, we note that $\|\pi(g)A\pi(g)^{-1}\|_{p'} = \|A\|_{p'}\in \cS^{p'}$ for all $g\in G$.  In fact, one can show that $|\pi(g)A\pi(g)^{-1}| = \pi(g)|A|\pi(g)^{-1} \in \cS^{p'}(\cH)$ for all $g\in G$ using the polar decompositions of $A$ and of $\pi(g)A\pi(g)^{-1}$, together with the fact that $\pi$ is unitary.  Similarly, by considering the spectral decomposition of the self-adjoint operators $|A|$ and $|\pi(g)A\pi(g)^{-1}|$, one has that $|\pi(g)A\pi(g)^{-1}|^{p'} = \pi(g)|A|^{p'}\pi(g)^{-1}$.  Since $\|A\|_{p'} := \Tr(|A|^{p'})^{1/p'}$, it follows immediately that $\|\pi(g)A\pi(g)^{-1}\|_{p'} = \|A\|_{p'}$. 

    Putting everything together, we have that:
    \[
        |(S*A)(g)| = |\Tr(S \pi(g) A \pi(g)^{-1})| \leq \|S\|_p \|\pi(g)A\pi(g)^{-1}\|_{p'} =  \|S\|_p \|A\|_{p'} \\
    \]
    for all $g\in G$, as we wished to show.
\end{proof} 

\begin{lemma}[{\cite[Theorem 4.2]{H23}}] \label{lem:operator_convolutions}
    Suppose that $(\pi,\cH)$ is square integrable, and let $S,A \in \cS^1(\cH)$. Then $A\ast_\pi S,\, S\ast_\pi A\in \LoneG$ and 
    \[
        \|A\ast_\pi S\|_1 = \|S\ast_\pi A\|_{1}\leq \frac{1}{d_\pi}\|S\|_1 \|A\|_1.
    \]
    Moreover, 
    \[
        \int_\G A\ast_\pi S(g)\haar{g} = \int_G S\ast_\pi A (g) \haar{g} 
                                       = \Tr(S) \Tr(A).
    \]
\end{lemma}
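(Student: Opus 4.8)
The plan is to reduce the whole statement to the case of rank-one operators, for which the twisted convolution is a pointwise product of coefficient functions, and then to bootstrap to arbitrary trace-class operators using the singular-value (Schmidt) decomposition. At the outset I note that, by \eqref{eq:operator_convolution_commutation}, $S\Gast A = (A\Gast S)^\vee$, and since $\G$ is unimodular we have $\|F^\vee\|_1 = \|F\|_1$ and $\int_\G F^\vee\,\haar{g} = \int_\G F\,\haar{g}$. Hence $\|A\Gast S\|_1 = \|S\Gast A\|_1$ and the two integrals in the ``Moreover'' clause agree, so it suffices to analyze $S\Gast A$.

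First I would carry out the rank-one computation. For $S = \tens{u}{v}$ and $A = \tens{z}{w}$ one has $\tpi(x)A = \tens{\pi(x)z}{\pi(x)w}$, so that, exactly as in the Example above,
\[
    (S\Gast A)(x) = \Tr\bigl(S\,\tpi(x)A\bigr) = \langle u,\pi(x)w\rangle\,\overline{\langle v,\pi(x)z\rangle} = \pi_{u,w}(x)\,\overline{\pi_{v,z}(x)}.
\]
Cauchy--Schwarz followed by the Schur orthogonality relations then yields
\[
    \|S\Gast A\|_1 \le \|\pi_{u,w}\|_2\,\|\pi_{v,z}\|_2 = \tfrac{1}{d_\pi}\,\|u\|\,\|w\|\,\|v\|\,\|z\| = \tfrac{1}{d_\pi}\,\|S\|_1\,\|A\|_1,
\]
since $\|\tens{u}{v}\|_1 = \|u\|\,\|v\|$. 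Thus the norm bound holds for rank-one operators.

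Next I would pass to general $S,A\in\cS^1(\cH)$ through their Schmidt decompositions $S = \sum_i s_i\,\tens{e_i}{f_i}$ and $A = \sum_j a_j\,\tens{g_j}{h_j}$, with $\{e_i\},\{f_i\},\{g_j\},\{h_j\}$ orthonormal, $s_i,a_j\ge 0$, $\sum_i s_i = \|S\|_1$ and $\sum_j a_j = \|A\|_1$. Because the partial sums converge in trace norm and $\tpi(x)$ is a trace-norm isometry, for each fixed $x$ one has $(S\Gast A)(x) = \sum_{i,j} s_i a_j\,\bigl(\tens{e_i}{f_i}\Gast\tens{g_j}{h_j}\bigr)(x)$. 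The rank-one estimate bounds the $L^1$-norm of the $(i,j)$-term by $\tfrac{1}{d_\pi}s_i a_j$, so by Tonelli the majorant $\sum_{i,j} s_i a_j\,\bigl|\bigl(\tens{e_i}{f_i}\Gast\tens{g_j}{h_j}\bigr)(x)\bigr|$ is integrable with integral at most $\tfrac{1}{d_\pi}\|S\|_1\|A\|_1$. Dominated convergence then shows the double series converges in $L^1(\G)$; hence $S\Gast A\in L^1(\G)$ and $\|S\Gast A\|_1 \le \tfrac{1}{d_\pi}\|S\|_1\|A\|_1$, and likewise for $A\Gast S$.

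Finally, for the integral identity I would pull the trace through the weak integral. As $T\mapsto\Tr(ST)$ is a bounded linear functional on $\cB(\cH)$ when $S\in\cS^1(\cH)$, and the weak integral $\one_\G\Gast A = \int_\G \tpi(g)A\,\haar{g}$ satisfies the defining relation against precisely such functionals (the ``larger class'' anticipated in the discussion of weak integrals), we obtain
\[
    \int_\G (S\Gast A)(g)\,\haar{g} = \Tr\Bigl(S\int_\G \tpi(g)A\,\haar{g}\Bigr) = \Tr\bigl(S\,(\one_\G\Gast A)\bigr).
\]
Proposition~\ref{prop:trace_convolutions} evaluates $\one_\G\Gast A = \tfrac{1}{d_\pi}\Tr(A)\,\Id_\cH$, which gives $\tfrac{1}{d_\pi}\Tr(S)\Tr(A)$, matching the normalization of Proposition~\ref{prop:trace_convolutions}. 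I expect the main obstacle to be the third step: making the bilinear Schmidt expansion rigorous, that is, justifying the interchange of the doubly-indexed sum with integration over $\G$. This is precisely where one needs the explicit integrable majorant assembled from products of coefficient functions together with dominated convergence; the trace--integral interchange in the last step requires the companion fact that the weak operator integral pairs correctly with trace-class functionals.
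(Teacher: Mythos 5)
The paper never actually proves this lemma---it is quoted directly from \cite[Thm.\ 4.2]{H23}---so there is no internal proof to compare against; your self-contained argument (rank-one computation via coefficient functions, Cauchy--Schwarz plus the orthogonality relations, then bootstrapping through the Schmidt decomposition with a Tonelli majorant) is the natural route and is essentially sound. The one step you flag yourself, the interchange $\int_\G \Tr(S\tpi(g)A)\haar{g} = \Tr\bigl(S\,(\one\Gast A)\bigr)$, is indeed not covered by the defining property of the weak integral, which only pairs against the vector states $B\mapsto\ip{Bv}{w}$; but it is fillable with tools you already assembled: expand $S=\sum_i s_i\,\tens{e_i}{f_i}$ in trace norm, note that $\Tr(S\tpi(g)A)=\sum_i s_i\ip{\tpi(g)Ae_i}{f_i}$ with $\sum_i s_i\int_\G|\ip{\tpi(g)Ae_i}{f_i}|\haar{g}\le \tfrac{1}{d_\pi}\|S\|_1\|A\|_1$ by your rank-one estimate, and apply Fubini; each summand then pairs the weak integral against exactly the vector states that define it. Alternatively, you can avoid the weak integral altogether by proving the integral identity first for rank-one $S,A$, where it is literally the orthogonality relations, $\int_\G \pi_{u,w}(g)\overline{\pi_{v,z}(g)}\haar{g} = \tfrac{1}{d_\pi}\ip{u}{v}\overline{\ip{w}{z}}$, and then extending to general trace-class operators by the same dominated-convergence argument you used for the norm bound.

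Your computation also exposes a genuine discrepancy in the statement itself, which you noticed but should state openly: under the paper's conventions (fixed Haar measure, orthogonality relations carrying the factor $\tfrac{1}{d_\pi}$, and Proposition~\ref{prop:trace_convolutions} giving $\one\Gast A = \tfrac{1}{d_\pi}\Tr(A)\Id_\cH$), the integral identity must read $\int_\G (S\Gast A)(g)\haar{g} = \tfrac{1}{d_\pi}\Tr(S)\Tr(A)$, which is what you derive---not $\Tr(S)\Tr(A)$ as printed. The printed version is even internally inconsistent with the printed norm bound: taking $S=\tens{u}{u}$ and $A=\tens{w}{w}$ with unit vectors, one has $S\Gast A = |\pi_{u,w}|^2\ge 0$, so $\int_\G (S\Gast A)(g)\haar{g} = \|S\Gast A\|_1 \le \tfrac{1}{d_\pi}$, whereas the printed identity would force this integral to equal $1$. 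Presumably \cite{H23} normalizes the Haar measure so that $d_\pi=1$; in the present paper's normalization your constant is the correct one, so this is an erratum in the lemma rather than an error in your proof.
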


\begin{theorem}[{\cite[Proposition 4.13]{H23}}] \label{thm:Intp3} 
    Assume that $(\pi,\cH)$ is square-integrable and that $1\leq p\leq \infty$. Let $S\in \cS^p(\cH)$, and $A\in\cS^1(\cH)$. Then
    \begin{equation}
        \label{eq:op_conv_estimate_interpolation}
        \|S\Gast A\|_p \leq \Big(\frac{1}{d_\pi}\Big)^{\frac{1}{p}}\|S\|_p\|A\|_1
    \end{equation}
\end{theorem}
\begin{proof} 
    This follows from Lemma~\ref{lem:operator_convolutions} by interpolation. 
\end{proof}

\begin{comment}
\blue{
Interpolation now gives, see \cite[Prop. 4.11]{H23}:

\begin{theorem}\label{thm:Intp4} Assume that $(\pi,\cH)$ is square integrable. Let $1\le p \le \infty$ and $p^\prime$ as usual the dual exponent $p^\prime = p/(p-1)$.
Let $S\in \cS^p(\cH)$ and $T\in \cS^{p^\prime}(\cH)$. Then
\[
T\ast_\pi S\in L^\infty (\G)\quad\text{and}\quad \|T\ast_\pi S\|_\infty \le \|T\|_{p^\prime}\|S\|_p .\]
\end{theorem}
\red{Check formal dimension}
}
\end{comment}

As a consequence of the previous theorem, we can use the Banach-space duality $\cS^1(\cH)^* = \cB(\cH)$ given by:
\begin{align*}
    \cS^1(\cH) \times \cB(\cH) & \rightarrow \C \\
    (A,B)                  & \mapsto \langle A, B \rangle_{\text{tr}} := \Tr AB
\end{align*}
to construct the convolutions $\psi *_\pi A\in\cB$ for all $A\in\cS^p$ and all $\psi\in L^{p'}$, where $1\leq p <\infty$:

\begin{lemma}
    Let $1\leq p <\infty$, let $S\in\cS^p(\cH)$, and let $\psi\in L^{p'}(G)$.  Then the weak integral
    \[
       \psi *_\pi S := \int_G \psi(x) \widetilde{\pi}(x)S\, d\mu_G(x) \in\cB(\cH)
    \]
    exists in the sense that
    \[
       \langle \psi *_\pi S, A \rangle_{\text{tr}} = \int_G \psi(x) \langle \widetilde{\pi}(x)S, A\rangle_{\text{tr}}\, d\mu_G(x)
    \]
    for all $A\in\cS^1(\cH)$.  In fact, one has that:
    \[
       \langle \psi*_\pi S, A\rangle_{\text{tr}} = \langle \psi , A*_\pi S \rangle,
    \]
    where the last pairing is the usual bilinear pairing between $L^{p'}(G)$ and $L^p(G)$.

    Furthermore, we have the norm estimate
    \[
       \|\psi*_\pi S\|_{\infty} \leq d_\pi^{-1/p} \|\psi\|_{p'} \|S\|_p 
    \]
\end{lemma}

\begin{proof}
    Fix $1\leq p < \infty$, $S\in\cS^p(\cH)$, and $\psi\in L^{p'}(\cH)$. We define the following functional on $\cS^1(\cH)$:
    \begin{align*}
        \lambda : \cS^1(\cH) & \rightarrow \C \\
        A    & \mapsto \int_G \psi(x) (A *_\pi S)(x)\, d\mu_G(x), 
    \end{align*}
    where the integral converges since $A *_\pi S\in L^p(G)$ by Theorem~\ref{thm:Intp3}.  

    Under the identification $\cS^1(\cH)^* = \cB(\cH)$, we can consider $\lambda$ to be an operator in $\cB(\cH)$, which we denote $\psi*_\pi S$.  We immediately have, for all $\psi\in L^{p'}$, $S\in \cS^p$, and $A\in\cS^1(\cH)$, that:
    \begin{align*}
        \langle \psi*_\pi S, A \rangle_{\text{tr}} & = \int_G \psi(x) (A * S)(x)\,d\mu_G(x)\\
                                                   & = \int_G \psi(x) \langle \widetilde{\pi}(x)S, A \rangle_{\text{tr}}d\mu_G(x),
    \end{align*}
    from which we conclude that 
    \[
        \psi*_\pi S = \int_G \psi(x) \widetilde{p}(x)S \in\cB(\cH),
    \]
    where the integral is taken as a weak integral with respect to the duality $\cS^1(\cH)^*= \cB(\cH)$. 

    Finally, using once more Theorem~\ref{thm:Intp3}, we observe that
    \[
        |\langle (\psi*_\pi S), A \rangle_{\text{tr}}| \leq d_\pi^{-1/p}\|\psi\|_{p'} \|S\|_p \|A\|_1,
    \]
    for all $A\in\cS^1(\cH)$, from which we conclude that $\|\psi*_\pi S\|_{\infty} \leq d_\pi^{-1/p} \|\psi\|_{p'} \|S\|_p$.
\end{proof}

\subsection{Convergence properties}
In this section we discuss convergence properties of the convolution of functions and operators as well as the convolution of two operators. The main results are collected in the following theorem: 
\begin{theorem}\label{lem:convergence}
    Let $(\pi,\cH)$ be a unitary representation of a unimodular group $G$.  Let $S\in \cB(\cH)$, and let $\{S_k\}_{k\in\N}$ be a sequence of bounded operators converging to $S$ in the strong operator topology (SOT). Then the following holds for $\psi \in L^1(\G)$ and $A,B\in \cS^1(\G)$:
    \begin{enumerate}
        \item $\psi\Gast S_k \stackrel{k\rightarrow \infty}{\longrightarrow} \psi\Gast S$.
        \item $S_k\Gast A \stackrel{k\rightarrow \infty}{\longrightarrow}  S\Gast A$ pointwise.  
        \item Suppose that $\pi$ is square integrable.  Then 
            \[
                (S_k\Gast A)\Gast B\stackrel{k\rightarrow \infty}{\longrightarrow}  (S\Gast A)\Gast B
            \]
            in the weak operator topology (WOT). 
    \end{enumerate}
\end{theorem}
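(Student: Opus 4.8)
All three statements rest on two standard tools: the uniform boundedness principle, which gives $M:=\sup_k\|S_k\|_{\op}<\infty$ (since $S_k\to S$ in SOT forces $\{S_ku\}_k$ to be bounded for each $u$), and the dominated convergence theorem. The plan is to write each convolution as a scalar integral (or scalar series) whose integrand converges pointwise in $k$ and admits a $k$-independent integrable majorant. For (1) I would test against a fixed vector $u\in\cH$. Since
\[
    (\psi\Gast S_k)u-(\psi\Gast S)u=\int_G\psi(g)\,\pi(g)(S_k-S)\pi(g)^{-1}u\haar{g}
\]
is a weak $\cH$-valued integral, the bound \eqref{eq:ineq_weak_integral} reduces matters to showing $\int_G|\psi(g)|\,\|(S_k-S)\pi(g)^{-1}u\|\haar{g}\to0$. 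For each fixed $g$ the integrand tends to $0$ by SOT convergence applied to the vector $\pi(g)^{-1}u$, and it is dominated by $|\psi(g)|\,(M+\|S\|_{\op})\|u\|\in L^1(G)$; dominated convergence then gives convergence in the strong operator topology (and a fortiori in the weak operator topology).

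For (2), fix $x\in G$ and put $B:=\tpi(x)A\in\cS^1(\cH)$, so that $(S_k\Gast A)(x)=\Tr(S_kB)$ and $\|B\|_1=\|A\|_1$. Writing a singular-value decomposition $B=\sum_i s_i\,\tens{g_i}{f_i}$ with $\sum_i s_i=\|A\|_1$ and $\{g_i\},\{f_i\}$ orthonormal, one gets $\Tr(S_kB)=\sum_i s_i\ip{S_kg_i}{f_i}$. Each summand converges to $s_i\ip{Sg_i}{f_i}$ by SOT and is dominated by $Ms_i$, with $\sum_i Ms_i=M\|A\|_1<\infty$; dominated convergence for series then yields $(S_k\Gast A)(x)\to(S\Gast A)(x)$.

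For (3), first note $S_k\Gast A\in L^\infty(G)$ with $\|S_k\Gast A\|_\infty\le M\|A\|_1$ by Lemma~\ref{lem:trace_class_bounded_operator_convolution_inequality}, so that $(S_k\Gast A)\Gast B$ is a well-defined bounded operator via Proposition~\ref{prop:trace_convolutions}. Fixing $u,v\in\cH$, we have
\[
    \ip{((S_k\Gast A)\Gast B)u}{v}=\int_G(S_k\Gast A)(g)\,\ip{\tpi(g)Bu}{v}\haar{g}.
\]
By (2) the factor $(S_k\Gast A)(g)$ converges pointwise and is bounded by $M\|A\|_1$, while the fixed function $g\mapsto\ip{\tpi(g)Bu}{v}=((\tens{u}{v})\Gast B)(g)$ lies in $L^1(G)$ because $\tens{u}{v}\in\cS^1(\cH)$ and $\pi$ is square integrable (Lemma~\ref{lem:operator_convolutions}). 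Thus $M\|A\|_1\,|\ip{\tpi(g)Bu}{v}|$ is an integrable majorant, and dominated convergence delivers the claimed WOT convergence.

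The main obstacle is pinning down the integrable majorant in (3): this is exactly where square integrability of $\pi$ is indispensable, since it is what places $g\mapsto\ip{\tpi(g)Bu}{v}$ in $L^1(G)$ and thereby licenses the interchange of limit and integral. The remaining steps — uniform boundedness, pointwise convergence of the integrands, and the rank-one identity $\Tr((\tens{u}{v})C)=\ip{Cu}{v}$ used to rewrite the majorant as an operator convolution — are routine.
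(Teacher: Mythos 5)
Your proof is correct and follows essentially the same route as the paper's: uniform boundedness plus dominated convergence, applied to the scalarized vector integral in (1), to a trace-class series expansion of $\Tr(S_k\tpi(g)A)$ in (2) (the paper expands $A$ directly as $\sum_i f_i\otimes\overline{h_i}$ rather than taking the singular value decomposition of $\tpi(x)A$, a purely cosmetic difference), and to the weak-integral pairing in (3). In fact your part (3) is more carefully justified than the paper's own, which is garbled there: it writes $A$ where $B$ is meant, cites Proposition~\ref{prop:trace_convolutions} rather than Lemma~\ref{lem:operator_convolutions} for the integrability of $g\mapsto\ip{\tpi(g)Bu}{v}$, and ends with a conclusion copied from part (1); your use of the rank-one identity $\ip{\tpi(g)Bu}{v}=((\tens{u}{v})\Gast B)(g)$ together with square integrability is exactly the right way to supply the missing majorant.
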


\begin{proof}
    By the Uniform Boundedness Principle, there exists $C>0$ such that $\|S_k\|\leq C$ for all $k\in\N$.  Furthermore, by linearity, it is sufficient to prove the theorem for the case in which $S=0$.

    To prove (1), we fix $v\in\cH$.  It follows that, for all $k\in\N$,
    \begin{align*}
        \|(\psi \Gast S_k) v \| & = \left\|\int_G \psi(g) \pi(g) S_k \pi(g)^{-1} v \haar{g}\right\|\\
        & \leq \int_\G |\psi(g)| \| S_k \pi(g)^{-1} v\|  \haar{g}
        %&\leq C \int_{\G}|\psi (g)| \haar{g} \|v\|\\
        %&\leq C\|\psi\|_1\|v\| .
    \end{align*}
    As long as we can find an integrable dominating function for the above integrand with is independent of $k\in\N$, the Lebesgue Dominated Convergence Theorem will imply that $\|(\psi\ast S_k)v  \| \rightarrow 0$.  In fact, $C\|v\| \psi$ is one such dominating function, since 
    \[
       |\psi(g)| \|S_k \pi(g)^{-1} v\| \leq |\psi(g)| \|S_k\|_{\op} \|v\| \leq C \|v\| \psi(g)
    \]
    for all $g\in G$ and $k\in\N$.

    \medskip

    \noindent
    (2) 
    Since $A$ is trace class there exist $f_i,h_i\in \Berg$ such that $\sum_{i=1}^\infty\|f_i\|^2< \infty$, $\sum_{i=1}^\infty\|h_i\|^2<\infty$, $A=\sum_{i=1}^\infty f_i\otimes \Bar{h_i}$, and $\Tr A=\sum_{i=1}^\infty \ip{f_i}{h_i}$. Note that, for all $g\in G$,
    \[
       S_k\tpi (g){A}=\sum_{i=1}^\infty (S_k\pi(g)f_i)\otimes\overline{\pi(g)h_i}
    \]
    and furthermore that $\sum_{i=1}^\infty \|S_k\pi(g)f_i\|^2\leq C\sum_{i=1}^\infty \|f_i\|^2 <\infty$ and $\sum_{i=1}^\infty\|\pi(g)h_i\|^2=\sum_{i=1}^\infty\|h_i\|^2<\infty$. Hence 
    \[
       (S_k\Gast A)(g)=\Tr(S_k\tpi (g) {A})=\sum_{i=1}^\infty\ip{S_k\pi(g)f_i}{\pi(g)h_i}.
    \]
    Note that $\lim_{k\to \infty }\ip{S_k\pi(g)f_i}{\pi(g)h_i}=\ip{S\pi(g)f_i}{\pi(g)h_i}$, since SOT convergence implies WOT convergence. Also,
    \[
       \sum_{i=1}^\infty|\ip{S_k\pi(g)f_i}{\pi(g)h_i}|\leq \sum_{i=1}^\infty \|f_i\|\|h_i\|\leq \Big(\sum_{i=1}^\infty \|f_i\|^2\Big)^{\frac{1}{2}}\Big(\sum_{i=1}^\infty \|h_i\|^2\Big)^{\frac{1}{2}}<\infty.
    \]
    Therefore, $\lim_{k\to \infty }(S_k\Gast A)(g)=(S\Gast A)(g)$ for all $g\in G$.
\medskip

\noindent
(3)  Note that $\|S_k\Gast A\|_\infty\leq C\|A\|_1$ for all $k\in\N$. Furthermore, we have that $S_k\Gast A\converges{k} S\Gast A$ pointwise. In addition, for all $v,w\in \cH$, the function $g\mapsto \ip{\tpi (g){A}v}{w}$ is in $L^1(G)$ by Theorem~\ref{prop:trace_convolutions}. The Lebesgue Dominated Convergence Theorem then implies that $\|(\psi * S_k)v\| \converges{k}0$.
\end{proof}

%=============================================
\subsection{Duality} 
In this section we always assume that $(\pi,\cH)$ is a square-integrable representation of a unimodular group $\G$. Recall the duality $\cS^p-\cS^{p^\prime}$-duality from \eqref{eq;duality} given by $\ip{S}{T}_{\Tr} = \Tr ST$ for $S\in \cS^p$ and $T\in \cS^{p'}$, 
%
%Recall that for $1\leq p< \infty$, the Banach-space dual of $\schop$ is $\schopq{q}$ where $\frac{1}{p}+\frac{1}{q}=1$ where the duality is given by
%\[
%    \ip{S}{A}_{\Tr}=\Tr(SA).
%\]
and similarly the $L^p-L^{p^\prime}$-pairing given by $\ip{\varphi}{\psi}_{L_1}= \int_{G} \varphi(g)\psi(g) \haar{g}$ for $\varphi \in L^p$ and $\psi\in L^{p'}$.

\begin{lemma} \label{lem:duality}
    Suppose that $(\pi ,\cH)$ is a unitary representation of a unimodular group $G$, and let $\psi\in \LoneG$, $a\in \bddfG$, $S\in \cS^p(\cH)$ where $1\leq p\leq \infty$ and let $A\in \cS^1(\cH)$.  Then
    \[ \ip{\psi\Gast S}{B}_{\Tr}=\ip{\psi}{B\Gast S}_{L^1}\ \ \forall B\in \cS^{p^\prime}(\cH).\]
\end{lemma}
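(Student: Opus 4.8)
The plan is to unwind both sides into integrals over $G$ and to match them using nothing more than the defining property of the weak integral together with the cyclicity of the trace. Recall that by definition $\psi\Gast S = \int_G \psi(g)\,\tpi(g)S\,\haar{g}$, while the twisted convolution of operators is the function $(B\Gast S)(g) = \Tr\big(B\,\tpi(g)S\big)$. First I would fix $B\in\cS^{p'}(\cH)$ and consider the linear functional $\Lambda_B : T \mapsto \ip{T}{B}_{\Tr} = \Tr(TB)$; by the Schatten-class Hölder inequality $\Lambda_B$ is bounded on $\cS^p(\cH)$, with $|\Lambda_B(T)|\le \|T\|_p\|B\|_{p'}$.

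Applying $\Lambda_B$ to the weak integral defining $\psi\Gast S$, the defining relation \eqref{eq:definition_weak_integral} gives
\[
    \ip{\psi\Gast S}{B}_{\Tr} = \Lambda_B\Big(\int_G \psi(g)\,\tpi(g)S\,\haar{g}\Big) = \int_G \psi(g)\,\Tr\big(\tpi(g)S\cdot B\big)\,\haar{g},
\]
where the integrand is dominated by $|\psi(g)|\,\|S\|_p\|B\|_{p'}$ and hence lies in $L^1(G)$ since $\psi\in L^1(G)$. Next I would use cyclicity of the trace: because $\tpi(g)S\in\cS^p(\cH)$ and $B\in\cS^{p'}(\cH)$ the products $\tpi(g)S\cdot B$ and $B\cdot\tpi(g)S$ are trace class, so $\Tr(\tpi(g)S\cdot B)=\Tr(B\cdot\tpi(g)S)=(B\Gast S)(g)$. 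Substituting this identity turns the right-hand side into $\int_G \psi(g)(B\Gast S)(g)\,\haar{g} = \ip{\psi}{B\Gast S}_{L^1}$, which is the claim.

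The single step requiring care — and the main obstacle — is justifying that the weak integral satisfies \eqref{eq:definition_weak_integral} against $\Lambda_B$ across the full range $1\le p\le\infty$. For $1\le p<\infty$ this is unproblematic: the map $g\mapsto\tpi(g)S$ is continuous in the $\cS^p$-norm (one checks this on finite-rank $S$, where it reduces to the strong continuity of $\pi$, and extends by density using that $T\mapsto\tpi(g)T$ is an $\cS^p$-isometry), and $\int_G \|\psi(g)\tpi(g)S\|_p\,\haar{g} = \|\psi\|_1\|S\|_p<\infty$ by Theorem~\ref{thm:Intp1}; hence $\psi\Gast S$ is a genuine Bochner integral in $\cS^p(\cH)$, for which the interchange with any bounded functional is automatic, and in particular with $B\in\cS^{p'}(\cH)\subseteq(\cS^p(\cH))^*$. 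The remaining endpoint $p=\infty$ (so $B\in\cS^1(\cH)$) is the only genuinely weak case: there the $\cB(\cH)$-valued integral is defined only against the functionals $\lambda_{v,w}$, but the trace functional $\Tr(\,\cdot\,B)$ attached to a trace-class $B$ is a norm-limit of finite linear combinations of such $\lambda_{v,w}$, so the defining relation extends to it. This is precisely the enlargement of the admissible class of functionals anticipated in the discussion of weak integrals in Section~\ref{sec:prelim}.
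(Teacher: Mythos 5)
Your proof is correct, and at its core it performs the same computation as the paper: pair $\psi\Gast S$ against $B$, pass the pairing through the weak integral, and use cyclicity of the trace to recognize $(B\Gast S)(g)$. The two arguments differ only in how they justify the interchange of $\Tr(\,\cdot\,B)$ with the integral, which is the one step with real content. The paper applies cyclicity first, expands $\Tr(B(\psi\Gast S))$ in an orthonormal basis $\{f_k\}$, uses the defining property of the $\cB(\cH)$-valued weak integral on each matrix-coefficient functional $T\mapsto\ip{BTf_k}{f_k}=\lambda_{f_k,B^*f_k}(T)$, and then swaps $\sum_k$ with $\int_G$ --- a Fubini step it leaves implicit, justified by $\sum_k|\ip{Tf_k}{f_k}|\le\|T\|_1$ together with $\|B\tpi(g)S\|_1\le\|B\|_{p'}\|S\|_p$. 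You instead treat $\Tr(\,\cdot\,B)$ as a single bounded functional and legitimize the interchange in two regimes: for $1\le p<\infty$ by upgrading the weak integral to a Bochner integral in $\cS^p$ (norm continuity of $g\mapsto\tpi(g)S$ via finite-rank approximation and the isometry of $\tpi(g)$ on $\cS^p$), where interchange with any bounded functional is automatic; and for $p=\infty$ by approximating $\Tr(\,\cdot\,B)$ in dual norm by finite combinations of the $\lambda_{v,w}$ coming from the singular value decomposition of $B$ --- exactly the enlargement of the admissible class of functionals that the paper's preliminaries on weak integrals anticipate. Both routes are sound; the paper's orthonormal-basis computation is shorter and uniform in $p$, while yours isolates and fully justifies the interchange, in particular at the endpoint $p=\infty$, where the integral is only weakly defined and the paper's Fubini step is the part a careful reader must supply.
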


\begin{proof}
     By Young's inequality (Theorem~\ref{thm:Intp1}), we know that $\psi\Gast S$ is in $\cS^p(\cH)$.
    Let $B\in \cS^{p'}(\cH)$ and let $\{f_k\}$ be an orthonormal basis. Then
    \begin{align*}
        \ip{\psi \Gast S}{B}_{\Tr} &= \Tr((\psi  \Gast S)B)\\
        &= \Tr(B(\psi \Gast S))\\
        &= \sum_{k} \ip{B(\psi \Gast S)f_k}{f_k}\\
        &= \sum_{k} \int_{G} \psi(g)\ip{B\actopL{g}{S}f_k}{f_k} \haar{g}\\
        &= \int_{G} \psi(g)  \sum_{k}\ip{B\actopL{g}{S}f_k}{f_k} \haar{g}\\
        &=\int_{G} \psi(g)  \Tr(B\actopL{g}{S}) \haar{g}\\
        &=\ip{a}{B\Gast S}_{\Tr}. \qedhere
    \end{align*}
\end{proof}

\begin{remark}
The above lemma shows that, when defined on appropriate spaces, the map $S\mapsto S\ast \Psi$ and the map $a\mapsto a\ast \Psi$ are adjoints of each other, where $\Psi$ is a fixed operator.
\end{remark}

\section{Table of Notation}\label{sec:table}

%\printnomenclature
\begin{tabularx}{5.7in}{|c|X|}
    \hline
    Notation & Description \\
    \hline
    $\cA^2(\B^n)$ & Bergman space $\cO(\B^n)\cap L^2(\B^n, \nu)$ of square-integrable holomorphic functions on the unit ball in $\C^n$ with respect to normalized Lebesgue measure $\nu$ (so that $\nu(\B^n) =1$).\\
    \hline
    $\cA^2_{\alpha}(\B^n)$ & Weighted Bergman space $\cO(\B^n) \cap L^2(\B^n, \mu_\alpha)$, where $d\mu_\alpha(z) = C_\alpha (1-|z|^2)^\alpha) d\nu(z)$.  Here $\alpha > -1$.\\
    \hline 
    $C_\alpha$ & Normalization factor so that $\mu_\alpha(\B^n) = 1$; $C_\alpha = \frac{(n+\alpha)!}{n!\alpha!}$. \\
    \hline
    $K^\alpha$ & Reproducing kernel for $\cA^2_\alpha(\B^n)$; see Equation~\ref{eq:reproducing_kernel_definition}. \\
    \hline
    $K^\alpha_z$ & Reproducing function for $z\in \B^n$; $K^\alpha_z = K^\alpha(\cdot, z)$; see Equation~\ref{eq:reproducing_kernel_definition}. \\
    \hline
    $k^\alpha_z$ & Normalized reproduction function; that is, $k^\alpha_z = K^\alpha_z / \|K^\alpha_z\| \in \cA^2_\alpha(\B^n)$; see Equation~\ref{eq:kz}. \\ 
    \hline
    $p_{\textbf{m}}$ & Where $\textbf{m}\in\N^n$ is a multi-index, this is the monomial function given by $p_{\textbf{m}}(z) := z_1^{m_1} \cdots z_n^{m_n}$; see Equation~~\ref{eq:onb}. \\ 
    \hline
    $e_{\textbf{m}}$ & Normalized monomial; $e_{\textbf{m}}:= p_{\textbf{m}} / \| p_{\textbf{m}}\|$; see Equation~\ref{eq:onb}.\\
    \hline 
    $\pi_\alpha$ & Holomorphic discrete series representation; see Equation~\ref{eq:representation_definition}. \\
    \hline
    $j_\alpha$ & Cocycle for the holomorphic discrete series representation $\pi_\alpha$; see Equation~\ref{eqn:cocycle_def}.\\
    \hline
    $K, K_z, k_z, \pi, j$ & The same as $K^\alpha$, $K_z^\alpha$, $k^\alpha_z$, $\pi_\alpha$, and $j_\alpha$, respectively, with $\alpha=0$ (corresponding to unweighted Bergman space $\cA(\B^n)$. \\
    \hline
    $\tau_z$   & Biholomorphic involution interchanging $z$ and $0$; see Section~\ref{sec:involutions_and_lifting}. \\
    \hline
    $\widetilde{\pi}$ & Conjugation action of $G$ on $\cB(\cA(\B^n))$ defined by $\widetilde(g)A := \pi(g)A\pi(g^{-1})$; see Section~\ref{sec:left_translations_of_operators}\\
    \hline
    $\ell_g \psi$ & Left translation of function $\psi:\B^n\rightarrow \C$ by $g\in G$; see Section~\ref{ss:Translation}.\\
    \hline
    $r_g \psi$ & Right translation of function $\psi:\B^n\rightarrow \C$ by $g\in G$; see Section~\ref{ss:Translation}.\\
    \hline
    $\psi*_\ell F$, $\psi*_r F$ & Left and right convolutions s of $\psi$ and $F$; see~Section~\ref{ss:Conv}.\\
    %$\psi*_r F$  & Right convolution of $\psi$ and $F$.\\
    \hline
    $\psi *_\pi A$ & Convolution of function $\psi$ with operator $A\in \cB(\cA^2(\B^n))$defined using the integrated representation of $\widetilde{\pi}$; see Section~\ref{sec:convolutions_of_functions_and_operators}. \\
    \hline
    $S *_\pi A$    & Twisted convolution of two operators $S,A\in \cB(cA^2(\B^n))$, defined using the trace; see Section~\ref{sec:twisted_convolution_of_operators}.\\ 
    \hline
    $C^{(L)}_{b,u}(\B^n)$ & Bounded, left-$G$-uniformly continuous functions on the unit ball; see Definition~\ref{def:cbul:functions}.\\
    \hline
    $C^{(L)}_{b,u}(\cA^2)$ & Bounded, left-$G$-uniformly continuous operators in $\cB(\cA^2(\B^n))$; see Definition~\ref{def:cbul:operators}.\\
    \hline
    $C^{(R)}_{b,u}(\cA^2)$ & Bounded, right-$G$-uniformly continuous functions on the unit ball; see Definition~\ref{def:cbur:functions}.\\
    \hline
    $\varphi_\alpha$   & Defined by $\varphi_\alpha(z) = C_\alpha (1-|z|^2)^{\alpha+n+1} = C_\alpha / K^\alpha(z,z)$ for all $z\in\B^n$. Forms approximate indentity for convolution.  See Section~\ref{sec:natural_approximate_identity}\\
    \hline
    $\Phi_\alpha$  & Trace-class operator defined in Section~\ref{sec:definition_of_operator_Phi_alpha} as a sort of operator version of $\varphi_\alpha$. Used to define $\alpha$-Berezin transform of operators.\\ 
    \hline
    $B(S)$  & Berezin transform of operator $S\in\cB(\cA^2(\B^n)$; $B(S)(z):= \langle Sk_z, k_z\rangle$ for all $z\in \B^n$; see Equation~\ref{eq:BerS}.\\
    \hline 
    $B(a)$  & Berezin transform of function $a:\B^n\rightarrow \C$; see Equation~\ref{eq:BerFunction}. \\
    \hline
    $B_\alpha(a)$ & Standard $\alpha$-Berezin transform of $a:\B^n\rightarrow \C$; same as Berezin transform of $a$ in $\cA^2_\alpha(\B^n)$; see Equation~\ref{eq:BerAlphaDefinition}.\\
    \hline\\[-12pt]
    $\widetilde{B_\alpha}(S)$ & An $\alpha$-Berezin transform of the operator $S\in\cB(\cA^2(\B^n))$; see Section~\ref{sec:definition_of_new_alpha_Berezin_transform}. \\
    \hline
    $\mathfrak{T}(L^\infty)$ & Toeplitz algebra; the $C^*$-algebra generated by $T_a$ with $a\in  L^\infty(\B^n)$. See Section~\ref{sec:prelim}. \\
    \hline
    $p'$ & If $1\leq p \leq \infty$, then $p':= p/(p-1)$, so that $\frac{1}{p} + \frac{1}{p'} = 1$. \\
    \hline
    $\cS^p(\cH)$ & If $1\leq p < \infty$, then this is the Schatten $p$ class of operators on the Hilbert space $\cH$.  If $p=\infty$, then for convenience we put $\cS^\infty(\cH)=\cB(\cH)$. See Section~\ref{sec:tensor_products_and_Schatten_p_class}.\\
    \hline
\end{tabularx}

\noindent \textbf{Acknowledgement:} The authors would like to thank the referee for their helpful comments and suggestions, which helped us improve the quality of the paper.\\

\noindent \textbf{Data availability:} Data sharing is not applicable to this article. \\

\noindent \textbf{Conflict of interest:} The authors have no financial or proprietary interests in any material discussed in this article.\\

\noindent \textbf{Funding:} Mishko Mitkovski was supported by was supported by NSF grant DMS-2000236. Gestur \'Olafsson was supported by Simons grant 58606.

%\printbibliography
\bibliographystyle{plain} 
\bibliography{main}

\end{document}